\newcommand{\R}{{\mathbb{R}}}
\renewcommand{\P}{{\mathbb{P}}}
\newcommand{\E}{{\mathbb{E}}}
\newcommand{\bx}{\mathbf{x}}
\newcommand{\bc}{\mathbf{c}}
\newcommand{\by}{\mathbf{y}}
\newcommand{\bepsilon}{\boldsymbol{\epsilon}}
\newcommand{\be}{\mathbf{e}}
\newcommand{\bzero}{\mathbf{0}}
\newcommand{\bA}{\mathbf{A}}
\newcommand{\bW}{\mathbf{W}}
\newcommand{\F}{\mathcal{F}}
\newcommand{\A}{\mathcal{A}}
\newcommand{\ba}{\mathbf{a}}
\newcommand{\bb}{\mathbf{b}}
\newcommand{\bxhat}{\mathbf{\hat{x}}}
\newcommand{\bN}{\mathbf{N}}
\newcommand{\bw}{\mathbf{w}}
\newcommand{\bd}{\mathbf{d}}
\newcommand{\W}{\mathcal{W}}
\newcommand{\btheta}{{\boldsymbol{\theta}}}
\newcommand{\bphi}{\boldsymbol{\phi}}
\renewcommand{\H}{\mathcal{H}}
\newcommand{\bt}{\mathbf{t}}
\newcommand{\balpha}{\boldsymbol{\alpha}}
\newcommand{\bK}{\mathbf{K}}
\DeclareMathOperator{\VI}{\text{VI}}
\newcommand{\blue}{\textcolor{black}}
\newcommand{\blockblue}{\color{black}}
\newcommand{\assref}[1]{\textbf{A\ref{#1}}}
\spnewtheorem{assumption}{Assumption}{\bf}{\rm}
\spnewtheorem*{assumption*}{Assumption}{\bf}{\rm}
 \journalname{Mathematical Programming}
\begin{document}

\title{Data-Driven Estimation in Equilibrium using Inverse Optimization}
\titlerunning{Data-Driven Estimation in Equilibrium}       

\author{Dimitris Bertsimas         \and
        	    Vishal Gupta 	\and
	    Ioannis Ch. Paschalidis
}

\authorrunning{D. Berstimas, V. Gupta \& I. Ch. Paschalidis} 

\institute{Dimitris Bertsimas \at
              MIT, Sloan School of Management \\
              Massachusetts Institute of Technology \\
              Cambridge, MA 02139 \\
              \email{dbertsim@mit.edu}           
           \and
           Vishal Gupta  \at
	Operations Research Center \\
	Massachusetts Institute of Technology \\
	Cambridge, MA 02139 \\
	\email{vgupta1@mit.edu}
	\and
	Ioannis Ch. Paschalidis \at
	Department of Electrical and Computer Engineering \\
	Boston University \\
	 Boston, MA 02215 \\
	  \email{yannisp@bu.edu}	
}

\date{Received: date / Accepted: date}

\maketitle

\begin{abstract}
Equilibrium modeling is common in a variety of fields such as game theory and transportation science.  The inputs for these models, however, are often difficult to estimate, while their outputs, i.e., the equilibria they are meant to describe,  are often directly observable.  By combining ideas from inverse optimization with the theory of variational inequalities, we develop an efficient, data-driven technique for estimating the parameters of these models from observed equilibria.  
We use this technique to estimate the utility functions of players in a game from their observed actions and to estimate the congestion function on a road network from traffic count data.  
A distinguishing feature of our approach is that it supports both parametric and \emph{nonparametric} estimation by leveraging ideas from statistical learning (kernel methods and regularization operators).  
In computational experiments involving Nash and Wardrop equilibria in a nonparametric setting, we find that a) we effectively estimate the unknown demand or congestion function, respectively, and b) our proposed regularization technique substantially improves the out-of-sample performance of our estimators.  
\keywords{Equilibrium \and Nonparametric Estimation \and Utility Estimation \and Traffic Assignment}
\end{abstract}

\section{Introduction}

Modeling phenomena as equilibria is a common approach in a variety of fields.  Examples include Nash equilibrium in game theory, traffic equilibrium in transportation science and  market equilibrium in economics.  
Often, however, the model primitives or ``inputs" needed to calculate equilibria are not directly observable and can be difficult to estimate.  Small errors in these estimates may have large impacts on the resulting equilibrium.  
This problem is particularly serious in \emph{design} applications, where one seeks to (re)design a system so that the induced equilibrium satisfies some desirable properties, such as maximizing social welfare.  In this case, small errors in the estimates may substantially affect the optimal design.  Thus, developing accurate estimates of the primitives is crucial. 

In this work we propose a novel framework to estimate the unobservable model primitives for systems in equilibrium.  Our data-driven approach hinges on the fact that although the model primitives may be unobservable, it is frequently possible to observe equilibria experimentally.  We use these observed equilibria to estimate the original primitives.  

We draw on an example from game theory to illustrate.  Typically, one specifies the utility functions for each player in a game and then calculates Nash equilibria. In practice, however, it is essentially impossible to observe utilities directly.  Worse, the specific choice of utility function often makes a substantial difference in the resulting equilibrium.  Our approach amounts to estimating a player's utility function from her actions in previous games, assuming her actions were approximately equilibria with respect to her opponents.  In contrast to her utility function, her previous actions \emph{are} directly observable.  This utility function can be used either to predict her actions in future games, or as an input to subsequent mechanism design problems involving this player in the future.  

A second example comes from transportation science.  Given a particular road network, one typically specifies a cost function and then calculates the resulting flow under user (Wardrop) equilibrium.  However, measuring the cost function directly in a large-scale network is challenging because of the interdependencies among arcs.  Furthermore, errors in estimates of cost functions can have severe and counterintuitive effects; Braess paradox (see \cite{Braess01112005}) is one well-known example.  
Our approach amounts to estimating cost functions using current traffic count data (flows) on the network, assuming those flows are approximately in equilibrium.  Again, in contrast to the cost function, traffic count data are readily observable and frequently collected on many real-life networks.  Finally, our estimate can be used either to predict congestion on the network in the future, or else to inform subsequent network design problems.

In general, we focus on equilibria that can be modeled as the solution to a variational inequality (VI).  VIs are a natural tool for describing equilibria with examples spanning economics, transportation science, physics,  differential equations,  and optimization.  (See Section~\ref{Sec:Examples} or \cite{harker1990finite} for detailed examples.)  Our model centers on solving an \emph{inverse variational inequality problem}: given data that we believe are equilibria, i.e., solutions to some VI, estimate the function which describes this VI, i.e., the model primitives.

Our formulation and analysis is motivated in many ways by the inverse optimization literature.  In inverse optimization, one is given a candidate solution to an optimization problem and seeks to characterize the cost function or other problem data that would make that solution (approximately) optimal.  See \cite{heuberger2004inverse} for a survey of inverse combinatorial optimization problems, \cite{ahuja2001inverse} for the case of linear optimization and \cite{iyengar2005inverse} for the case of conic optimization.  
The critical difference, however, is that we seek a cost function that would make the observed data equilibria, not optimal solutions to an optimization problem.  \blue{In general, optimization problems can be reformulated as variational inequalities (see Sec.~\ref{Sec:Examples}), so that our inverse VI problem \emph{generalizes} inverse optimization, but this generalization allows us to address a variety of new applications.}  

To the best of our knowledge, we are the first to consider \blue{inverse variational inequality problems.  Previous work, however, has examined the problem of estimating parameters for systems assumed to be in equilibrium, most notably the structural estimation literature in econometrics and operations management
(\cite{rust1994structural}, \cite{bajari2007estimating}, \cite{allon2011much}, \cite{nevo2001measuring}).}  Although there are a myriad of techniques collectively referred to as structural estimation, roughly speaking, they entail (1) assuming a parametric model for the system including probabilistic assumptions on random quantities, (2) deducing a set of necessary (structural) equations for unknown parameters, and, finally, (3) \blue{solving a constrained optimization problem corresponding to a generalized method of moments (GMM) estimate for the parameters.  The constraints of this optimization problem include the structural equations and possibly other application-specific constraints, e.g., orthogonality conditions of instrumental variables.  
Moreover, this optimization problem is typically difficult to solve numerically, as it is can be non-convex with large flat regions and multiple local optima (see \cite{allon2011much} for some discussion).}

{
\blockblue
Our approach differs from structural estimation and other specialized approaches in a number of respects.  From a philosophical point of view, the most critical difference is in the objective of the methodology.  Specifically, in the structural estimation paradigm, one posits a ``ground-truth" model of a system with a known parametric form.  The objective of the method is to learn the parameters in order to provide insight into the system.  By contrast, in our paradigm, we make no assumptions (parametric or nonparametric) about the true mechanics of the system; we treat is as a ``black-box."  Our objective is to fit a model -- in fact, a $\VI$ -- that can be used to predict the behavior of the system.   We make no claim that this fitted model accurately reflects ``reality," merely that it has good predictive power.  

This distinction is subtle, mirroring the distinction between ``data-modelling" in classical statistics and ``algorithmic modeling" in machine learning.  (A famous, albeit partisaned, account of this distinction is \cite{breiman2001statistical}.)  Our approach is kindred to the machine learning point of view.  For a more detailed discussion, please see Appendix~\ref{sec:Casting}.

This philosophical difference has a number of \emph{practical} consequences:
\begin{enumerate}
\item \textbf{Minimal Probabilistic Assumptions:}  Our method has provably good performance in a very general setting with minimal assumptions on the underlying mechanism generating the data.  (See Theorems~\ref{thm:Campi}-\ref{thm:Predictive} for precise statements.) By contrast, other statistical methods, including structural estimation, require a full-specification of the data generating mechanism and can yield spurious results if this specification is inaccurate.  

\item \textbf{Tractability:}  Since our fitted model need not correspond exactly to the underlying system dynamics, we have considerably more flexibility in choosing its functional form.  For several interesting choices, including nonparametric specifications (see next point), the resulting inverse $\VI$ problem can be reformulated as a conic optimization problem.  Conic optimization problems are both theoretically and numerically tractable, even for large scale instances (\cite{boyd2004convex}), in sharp contrast to the non-convex problems that frequently arise in other methods.  

\item \textbf{Nonparametric Estimation:}  Like existing methods in inverse optimization and structural estimation, our approach can be applied in a parametric setting.  Unlike these approaches, our approach also extends naturally to a nonparametric description of the function $\mathbf{f}$ defining the $\VI$.  To the best of our knowledge, existing methods  do not treat this possibility.  Partial exceptions are \cite{NBERw15276} and \cite{ECTA:ECTA123} which use nonparametric estimators for probability densities, but parametric descriptions of the mechanism governing the system.  
The key to our nonparametric approach is to leverage kernel methods from statistical learning to reformulate the infinite dimensional inverse variational inequality problem as a finite dimensional, convex quadratic optimization problem.  
In applications where we may not know, or be willing to specify a particular form for $\mathbf{f}$ we consider this non-parametric approach particularly attractive.
\end{enumerate}

Although there are  other technical differences between these approaches -- for example, some structural estimation techniques can handle discrete features while our method applies only to continuous problems -- we feel that the most important difference is the aforementioned intended purpose of the methodology.  We see our approach as complementary to existing structural estimation techniques and believe in some applications practitioners may prefer it for its computational tractability and relatively fewer modeling assumptions.  Of course, in applications where the underlying assumptions of structural estimations or other statistical techniques are valid, those techniques may yield potentially stronger claims about the underlying system. 
}

We summarize our contributions below:
\begin{enumerate}
	\item We propose the inverse variational inequality problem to model inverse equilibrium.  We illustrate the approach by estimating market demand functions under Bertrand-Nash equilibrium and by estimating the congestion function in a traffic equilibrium.  
	\item We formulate an optimization problem to solve a parametric version of the inverse variational inequality problem.  The complexity of this optimization depends on the particular parametric form of the function to be estimated.  We show that for several interesting choices of parametric form, the parametric version of the inverse variational inequality problem can be reformulated as a simple conic optimization problem.   
	\item We formulate and solve a nonparametric version of the inverse variational inequality problem using kernel methods.  We show that this problem can be efficiently solved as a convex quadratic optimization problem whose size scales linearly with the number of observations.  
	\item \blue{Under very mild assumptions on the mechanism generating the data, we show that both our parametric and non-parametric formulations enjoy a strong generalization guarantee similar to the guarantee enjoyed by other methods in machine learning.  Namely, if the fitted VI explains the existing data well, it will continue to explain new data well.  Moreover, under some additional assumptions on the optimization problem, equilibria from the VI serve as good predictions for new data points.}   
	\item We provide computational evidence in the previous two examples -- demand estimation under Nash equilibrium and congestion function estimation under traffic equilibrium -- that our proposed approach recovers reasonable functions with good generalization properties and predictive power. 
We believe these results may merit independent interest in the specialized literature for these two applications.
\end{enumerate}

The remainder of this paper is organized as follows.  Section~\ref{Sec:VIs} reviews background material on equilibrium modeling through VIs.  Section~\ref{sec:InverseVI} formally defines the inverse variational inequality problem and solves it in the case that the function to be estimated has a known parametric form.  In preparation for the nonparametric case, Section~\ref{Sec:Kernels} reviews some necessary background material on kernels.  Section~\ref{Sec:NonParametric} formulates and solves the nonparametric inverse variational inequality problem using kernels, and Section~\ref{sec:extensions} \blue{illustrates how to incorporate priors, semi-parametric modeling and ambiguity sets into this framework.  Section~\ref{sec:Generalization} states our results on the generalization guarantees and predictive power of our approach.}  Finally, Section~\ref{sec:Numerics} presents some computational results, and Section~\ref{Sec:Conclusion} concludes.  In the interest of space, almost all proofs are placed in the Appendix.   

In what follows we will use boldfaced capital letters ( e.g., $\bA, \bW$)  to denote matrices, boldfaced lowercase letters (e.g., $\bx, \mathbf{f}( \cdot )$) to denote vectors or vector-valued functions, and ordinary lowercase letters to denote scalars.  We will use caligraphic capital letters (e.g., $\mathcal{S}$) to denote sets.   For any proper cone $C$, i.e. $C$ is pointed, closed, convex and has a strict interior, we will say $ \bx \leq_C \by$ whenever $\by - \bx \in C$. 

\section{Variational Inequalities: Background}
\label{Sec:VIs}
\subsection{Definitions and Examples}
\label{Sec:Examples}
 \label{Sec:Traffic} 
In this section, we briefly review some results on variational inequalities that we use in the remainder of the paper.  For a more complete survey, see \cite{harker1990finite}.  

Given a function $\mathbf{f} : \R^n \rightarrow \R^n$ and a non-empty set $\F \subseteq \R^n$ the variational inequality problem, denoted $\VI( \mathbf{f}, \F)$, is to find an $\bx^* \in \F$ such that 
\begin{equation}
\label{eq:DefVI}
\mathbf{f}( \bx^*)^T ( \bx - \bx^*) \geq 0, \quad \forall \bx \in \F.
\end{equation}
A solution $\bx^*$ to $\VI( \mathbf{f}, \F)$ need not exist, and when it exists, it need not be unique.  We can guarantee the existence and uniqueness of the solution by making appropriate assumptions on $\mathbf{f}( \cdot )$ and/or $\F$, e.g., $\mathbf{f}$ continuous and $\F$ convex and compact.   See \cite{harker1990finite} for other less stringent conditions.     

There are at least three classical applications of VI modeling that we will refer to throughout the paper: constrained optimization, Nash Equilibrium, and Traffic (or Market) Equilibrium.  

\emph{\textbf{Constrained Optimization.}}
The simplest example of a VI is in fact not an equilibrium, per se, but rather convex optimization.  
Nonetheless, the specific example is very useful in building intuition about VIs.    
Moreover, using this formalism, one can derive many of the existing results in the inverse optimization literature as a special case of our results for inverse VIs in Section~\ref{Sec:Parametric}.  
 
Consider the problem 
\begin{equation}\label{eq:ConvexOpt}
\min_{\bx \in \F} F( \bx ).  
\end{equation}
The first order necessary conditions for an optimal solution of this problem are (see, e.g., \cite{bertsekas1999nonlinear})
\begin{equation} \label{eq:ConvexOptVI}
\nabla F( \bx^*)^T ( \bx - \bx^*) \geq 0, \quad \forall \bx \in \F.
\end{equation}
These conditions are sufficient in the case that $F$ is a convex function and $\F$ is a convex set.  Observe, then, that solving \eqref{eq:ConvexOpt} is equivalent to finding a point which satisfies Eq. \eqref{eq:ConvexOptVI}, which is equivalent to solving $\VI(\nabla F, \F)$. 

Note that, in general, a VI with a function $\mathbf{f}$ whose Jacobian is symmetric models an optimization problem (see \cite{harker1990finite}).  

\emph{\textbf{Nash Equilibrium.}}
Our first application of VI to model equilibrium is non-cooperative Nash equilibrium.   
Consider a game with $p$ players.  Each player $i$ chooses an action from a set of feasible actions, $\ba_i \in \A_i \subseteq \R^{m_i}$, and receives a utility 
$U_i(\ba_1, \ldots, \ba_p )$.  Notice in particular, that player $i$'s payoff may depend upon the actions of other players.  We will assume that $U_i$ is differentiable and concave in $\ba_i$ for all $i$ and that $\A_i$ is convex for all $i$.     

A profile of actions for the players $(\ba_1^*, \ba_2^*, \ldots \ba_p^*)$ is said to be a Nash Equilibrium if no single player can unilaterally change her action and increase her utility.  See \cite{fudenberg1991game} for a more complete treatment.  In other words, player $i$ plays her best response given the actions of the other players.  More formally, 
\begin{equation}  
\label{eq:NashCondition}
\ba_i^* \in \arg \max_{\ba \in \A_i} U_i( \ba_1^*, \ldots, \ba_{i-1}^*, \ba, \ba_{i+1}^*, \ldots, \ba^*_p), \quad i = 1, \ldots, p.
\end{equation}

This condition can be expressed as a VI.  Specifically, a profile $\ba^* = (\ba_1^*, \ba_2^*, \ldots \ba^*_p)$ is a Nash Equilibrium, if and only if it solves $\VI( \mathbf{f}, \F)$ where $\F = \A_1 \times \A_2 \times \dots \times \A_p $,  
\begin{equation} 
\label{eq:NashVI}
\mathbf{f} ( \ba ) = \begin{pmatrix} 
				-\nabla_1 U_1 ( \ba  ) 
				\\
				\vdots 
				\\ 
				-\nabla_p U_p( \ba )
		\end{pmatrix}
\end{equation}
and $\nabla_i$ denotes the gradient with respect to the variables $\ba_i$  
(see \cite{harker1990finite} for a proof.)  

It is worth pointing out that many authors use Eq. \eqref{eq:NashCondition} to conclude 
\begin{equation}
\label{eq:NashBad}
\nabla_i U_i( \ba^*_1, \ldots, \ba^*_p) = \bzero, \quad i = 1, \ldots, p,
\end{equation}
where $\nabla_i$ refers to a gradient with respect to the coordinates of $\ba_i$.  
This characterization assumes that each player's best response lies on the strict interior of her strategy set $\A_i$.  The assumption is often valid, usually because the strategy sets are unconstrained.  Indeed, this condition can be derived as a special case of \eqref{eq:NashVI} in the case $\A_i = \R^{m_i}$.  In some games, however, it is not clear that an equilibrium must occur in the interior, and we must use \eqref{eq:NashVI} instead.  We will see an example in Sec. \ref{Sec:Parametric}.
 
\emph{\textbf{Wardrop Equilibrium.}} 
Our final example of a VI is Wardrop or user-equilibrium from transportation science.  
 Wardrop equilibrium is extremely close in spirit to the market (Walrasian) equilibrium model in economics -- see \cite{dafermos1984network}, \cite{zhao1991general}  -- and our comments below naturally extend to the Walrasian case.  

Specifically, we are given a directed network of nodes and arcs $(\mathcal{V}, \A)$, representing the road network of some city.  Let $\bN \in \{0, 1 \}^{|\mathcal{V} | \times | \A | }$ be the node-arc incidence matrix of this network.  For certain pairs of nodes $\bw = (w_s, w_t) \in \mathcal{W}$, we are also given an amount of flow $d^\bw$ that must flow from $w_s$ to $w_t$.  The pair $\bw$ is referred to as an origin-destination pair.  
Let $\bd^{\bw} \in \R^{|V|}$ be the vector which is all zeros, except for a $(-d^\bw)$ in the coordinate corresponding to node $w_s$ and a $(d^\bw)$ in the coordinate corresponding to node $w_t$.  

We will say that a vector of flows $\bx \in \R^{| \A |}_+$ is feasible if $\bx \in \F$ where 
\[
\F = \left\{ \bx : \exists \bx^\bw \in \R^{ |\A|}_+ \text{ s.t. } 
\quad
\bx  = \sum_{\bw \in \W} \bx^\bw, 
\quad 
\bN \bx^\bw = \bd^\bw \quad \forall \bw \in \mathcal{W} \right\}.
\]

Let $c_a: \R^{| \A |}_+ \rightarrow \R_+$ be the ``cost" function for arc $a \in \A$.  The interpretation of cost, here, is deliberately vague.  The cost function might represent the actual time it takes to travel an arc, tolls users incur along that arc, disutility from environmental factors along that arc, or some combination of the above.  Note that because of interdependencies in the network, the cost of traveling arc $a$ may depend not only on $\bx_a$, but on the flows on other arcs as well.  Denote by $\bc ( \cdot )$ the vector-valued function whose $a$-th component is $c_a( \cdot )$.  

A feasible flow $\bx^*$ is a Wardrop equilibrium if for every origin-destination pair $\bw \in W$, and any path connecting $(w_s, w_t)$ with positive flow in $\bx^*$, the cost of traveling along that path is less than or equal to the cost of traveling along any other path that connects $(w_s, w_t)$.  Here, the cost of traveling along a path is the sum of the costs of each of its constituent arcs.  Intuitively, a Wardrop equilibrium captures the idea that if there exists a less congested route connecting $w_s$ and $w_t$, users would find and use it instead of their current route.  

It is well-known that a Wardrop equilibrium is a solution to $\VI(\bc, \F)$.  

\subsection{Approximate Equilibria}
\label{Sec:ApproxEquil}
Let $\epsilon > 0$.  We will say that $\bxhat \in \F$ is an $\epsilon$-approximate solution to $\VI(\mathbf{f}, \F)$ if 
\begin{equation}
\label{eq:defApproxEquil}
\mathbf{f}(\bxhat)^T(\bx - \bxhat) \geq - \epsilon, \quad \forall \bx \in \F.
\end{equation}
This notion of an approximate solution is not new to the VI literature-- it corresponds exactly to the condition that the primal gap function of the VI is bounded above by $\epsilon$ and is frequently used in the analysis of numerical procedures for solving the VI.  We point out that $\epsilon$-approximate solutions also frequently have a modeling interpretation.  For example, consider the case of constrained convex optimization (cf. Eq. \eqref{eq:ConvexOpt}).  Let $\bx^*$ be an optimal solution.  Since $F$ is convex, we have $F( \bxhat) - F(\bx^*) \leq -\nabla F( \bxhat )^T ( \bx^* - \bxhat) \leq \epsilon$.  In other words, $\epsilon$-approximate solutions to VIs generalize the idea of $\epsilon$-optimal solutions to convex optimization problems.  
Similarly, in a Nash equilibrium, an $\epsilon$-approximate solution to the VI \eqref{eq:NashVI} describes the situation where each player $i$ does not necessarily play her best response given what the other players are doing, but plays a strategy which is no worse than $\epsilon$ from her best response.

The idea of $\epsilon$-approximate solutions is not the only notion of an approximate equilibrium.  An alternative notion of approximation is that $\| \bxhat - \bx^*\| \leq \delta$ where $\bx^*$ is a solution to the $\VI(\mathbf{f}, \F)$.  We say such a $\bxhat \in \F$ is $\delta$-near a solution to the $\VI(\mathbf{f}, \F)$.  As shown in Theorem~\ref{thm:PangApprox}, these two ideas are closely related.  The theorem was proven in \cite{pang1987posteriori} to provide stopping criteria for certain types of iterative algorithms for solving VIs.  We reinterpret it here in the context of approximate equilibria.

Before stating the theorem, we define strong monotonicity.  We will say that $\mathbf{f} ( \cdot )$ is \emph{strongly monotone} if $\exists \gamma > 0$ such that
\[
(\mathbf{f}( \bx) - \mathbf{f}(\by) )^T(\bx - \by ) \geq \gamma \| \bx - \by \|^2  \quad \forall \bx, \by \in \F.
\]
\blue{When the VI corresponds to constrained optimization (cf. Eqs.~\eqref{eq:ConvexOpt}, \eqref{eq:ConvexOptVI}), strong monotonicity of $f$ corresponds to strong convexity of $F$.  Intuitively, strong monotonicity ensures that $f$ does not have large, flat regions.}
\begin{theorem}[\cite{pang1987posteriori}]
\label{thm:PangApprox}
Suppose $\mathbf{f}$ is strongly monotone with parameter $\gamma$.  Then every $\epsilon$-approximate solution to $\VI(\mathbf{f}, \F)$ is $\sqrt{\frac{\epsilon}{\gamma}}$-near an exact solution.  
\end{theorem}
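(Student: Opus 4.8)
The plan is to first pin down what ``an exact solution'' means here, and then reduce the inequality to a two-line manipulation. Strong monotonicity of $\mathbf{f}$ forces $\VI(\mathbf{f},\F)$ to have at most one solution: if $\bx_1^*$ and $\bx_2^*$ both solved it, adding the two instances of \eqref{eq:DefVI}, namely $\mathbf{f}(\bx_1^*)^T(\bx_2^*-\bx_1^*)\ge 0$ and $\mathbf{f}(\bx_2^*)^T(\bx_1^*-\bx_2^*)\ge 0$, would give $(\mathbf{f}(\bx_1^*)-\mathbf{f}(\bx_2^*))^T(\bx_1^*-\bx_2^*)\le 0$, which contradicts strong monotonicity unless $\bx_1^*=\bx_2^*$. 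Existence of $\bx^*$ is the content of the standard theory (e.g.\ $\mathbf{f}$ continuous, $\F$ closed and convex), so I would simply take this $\bx^*$ as given and bound $\|\bxhat - \bx^*\|$.

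The heart of the argument is a single application of strong monotonicity to the ordered pair $(\bxhat,\bx^*)$, both of which lie in $\F$:
\[
\gamma\,\|\bxhat-\bx^*\|^2 \;\le\; \big(\mathbf{f}(\bxhat)-\mathbf{f}(\bx^*)\big)^T(\bxhat-\bx^*) \;=\; \mathbf{f}(\bxhat)^T(\bxhat-\bx^*)\;-\;\mathbf{f}(\bx^*)^T(\bxhat-\bx^*).
\]
I would then bound the two terms on the right separately, each by testing one of the hypotheses in a carefully chosen direction. For the second term, evaluating the exact condition \eqref{eq:DefVI} at $\bx^*$ with the feasible point $\bx=\bxhat$ gives $\mathbf{f}(\bx^*)^T(\bxhat-\bx^*)\ge 0$, hence $-\mathbf{f}(\bx^*)^T(\bxhat-\bx^*)\le 0$. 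For the first term, evaluating the $\epsilon$-approximate condition \eqref{eq:defApproxEquil} at $\bxhat$ with the feasible point $\bx=\bx^*$ gives $\mathbf{f}(\bxhat)^T(\bx^*-\bxhat)\ge -\epsilon$, i.e.\ $\mathbf{f}(\bxhat)^T(\bxhat-\bx^*)\le \epsilon$. Combining, $\gamma\,\|\bxhat-\bx^*\|^2\le \epsilon$; dividing by $\gamma>0$ and taking square roots yields $\|\bxhat-\bx^*\|\le\sqrt{\epsilon/\gamma}$, which is precisely the claim that $\bxhat$ is $\sqrt{\epsilon/\gamma}$-near the exact solution.

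As for difficulties, there is honestly no deep obstacle: the proof collapses to choosing the right test directions (plugging $\bxhat$ into the exact inequality and $\bx^*$ into the approximate one, then adding). The only points requiring care are (i) invoking the ambient regularity that makes $\bx^*$ well-defined, so the statement is not vacuous, and (ii) checking admissibility — that both $\bxhat\in\F$ and $\bx^*\in\F$, so each may legitimately be substituted into the other's inequality — both of which are immediate from the definitions of an $\epsilon$-approximate solution and of $\VI(\mathbf{f},\F)$.
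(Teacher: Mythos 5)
Your proof is correct and complete: the single application of strong monotonicity to the pair $(\bxhat,\bx^*)$, combined with testing the exact VI at $\bx=\bxhat$ and the $\epsilon$-approximate condition at $\bx=\bx^*$, gives $\gamma\|\bxhat-\bx^*\|^2\le\epsilon$ exactly as claimed, and your uniqueness remark is also sound. The paper itself supplies no proof — it imports the result from the cited reference \cite{pang1987posteriori} — and your argument is the standard one, so there is nothing further to reconcile.
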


We require Theorem~\ref{thm:PangApprox} in Section~\ref{sec:Generalization} to prove some of our generalization results.  
 
\subsection{Characterizing Approximate Solutions to VIs over Conic Representable Sets}
\label{Sec:CharacterizingApprox}
In this section we provide an alternative characterization of an $\epsilon$-approximate solution (cf. Eq. \eqref{eq:defApproxEquil}) in the case when $\F$ is represented by the intersection of conic inequalities.  

Specifically, for the remainder of the paper, we will assume: 
\begin{assumption} \label{assumption1}
$\F$ can be represented as the intersection of a small number of conic inequalities in standard form,
$
\F = \{ \bx : \bA \bx = \bb, \bx \in C \}.
$
\end{assumption}
\begin{assumption}\label{assumption2}
$\F$ satisfies a Slater-condition
\end{assumption}
The assumption that $\F$ is given in standard form is not crucial.  All of our results extend to the case that $\F$ is not given in standard form at the expense of some notation.  It is, however, crucial, that $\F$ is conic representable.  
Observe that when $C$ is the nonnegative orthant, we recover the special case where $\F$ is a polyhedron.  With other choices of $C$, e.g., the second-order cone, we can model more complex sets, such as intersection of ellipsoids.  To stress the dependence on $\bA, \bb, C$, we will write $\VI(\mathbf{f}, \bA, \bb, C).$

The following result was first proven in \cite{Aghassi2006481} to describe a reformulation of $\VI(\mathbf{f}, \bA, \bb, C)$ as a single-level optimization problem.  We reinterpret here as a characterization of approximate equilibria and sketch a short proof for completeness.  

\begin{theorem}[\cite{Aghassi2006481}]
\label{thm:Aghassi}
Under assumptions \assref{assumption1}, \assref{assumption2}, the solution $\bxhat$ is an $\epsilon$-approximate equilibrium to $\VI(\mathbf{f}, \bA, \bb, C)$ if and only if $\exists \by$ s.t.
\begin{align} 
\label{eq:DualFeas}
	\bA^T \by \leq_C \mathbf{f}( \bxhat), 
\\ \label{eq:StrongDuality}
	\mathbf{f}(\bxhat)^T \bxhat  - \bb^T \by \leq \epsilon.
\end{align}	
\end{theorem}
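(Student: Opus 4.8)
The plan is to exploit that, for a \emph{fixed} feasible point $\bxhat \in \F$, the vector $\mathbf{f}(\bxhat)$ is just a constant, so the defining condition \eqref{eq:defApproxEquil} is a statement about the optimal value of a \emph{linear conic program}. Pulling the constant $\mathbf{f}(\bxhat)^T\bxhat$ out of the infimum in $\inf_{\bx \in \F}\mathbf{f}(\bxhat)^T(\bx - \bxhat) \ge -\epsilon$, we see that $\bxhat$ is an $\epsilon$-approximate equilibrium if and only if $\bxhat \in \F$ and
\[
P^* \;:=\; \inf_{\bx \in \F}\ \mathbf{f}(\bxhat)^T \bx \;=\; \inf\big\{\, \mathbf{f}(\bxhat)^T \bx \ :\ \bA\bx = \bb,\ \bx \in C \,\big\} \;\geq\; \mathbf{f}(\bxhat)^T\bxhat - \epsilon ,
\]
where we used Assumption \assref{assumption1} for the conic description of $\F$. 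The conic dual of this program is $D^* := \sup\{\, \bb^T\by : \bA^T\by \leq_C \mathbf{f}(\bxhat) \,\}$, and one checks that \eqref{eq:DualFeas} is precisely dual feasibility of $\by$, while \eqref{eq:StrongDuality}, given \eqref{eq:DualFeas}, says exactly $\bb^T\by \geq \mathbf{f}(\bxhat)^T\bxhat - \epsilon$.

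For the ``if'' direction I would use only weak conic duality: any $\by$ satisfying \eqref{eq:DualFeas} is dual feasible, so $P^* \geq D^* \geq \bb^T\by \geq \mathbf{f}(\bxhat)^T\bxhat - \epsilon$ by \eqref{eq:StrongDuality}, which is the displayed inequality; together with $\bxhat \in \F$ this is exactly the definition of an $\epsilon$-approximate equilibrium. For the ``only if'' direction I would invoke strong conic duality: since $\bxhat \in \F$ the primal is feasible and $P^* \leq \mathbf{f}(\bxhat)^T\bxhat < \infty$, while the $\epsilon$-approximate property gives $P^* \geq \mathbf{f}(\bxhat)^T\bxhat - \epsilon > -\infty$, so $P^*$ is finite. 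The Slater condition (Assumption \assref{assumption2}) then yields zero duality gap and \emph{attainment} of the dual optimum, so there is a $\by$ with $\bA^T\by \leq_C \mathbf{f}(\bxhat)$ and $\bb^T\by = D^* = P^* \geq \mathbf{f}(\bxhat)^T\bxhat - \epsilon$, i.e. \eqref{eq:DualFeas}--\eqref{eq:StrongDuality}.

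I expect the only nonroutine ingredient to be this appeal to the conic duality theorem under a Slater condition: one needs both that the gap vanishes \emph{and} that the dual optimum is attained, which is standard (see, e.g., \cite{boyd2004convex}) once primal feasibility and finiteness of $P^*$ have been verified as above. Two minor points deserve a mention in the write-up. First, the equivalence implicitly presumes $\bxhat \in \F$, as required by Definition \eqref{eq:defApproxEquil}; feasibility of $\bxhat$ cannot be read off from \eqref{eq:DualFeas}--\eqref{eq:StrongDuality} and should be carried as a standing hypothesis. Second, the cone in the dual feasibility condition is strictly the dual cone $C^*$; this coincides with $C$ for the self-dual cones (nonnegative orthant, second-order cone, semidefinite cone) motivating Assumption \assref{assumption1}, which is why the statement is written with $\leq_C$.
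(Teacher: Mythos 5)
Your proof is correct and follows essentially the same route as the paper's: weak conic duality gives the ``if'' direction, and strong conic duality with dual attainment under the Slater condition (Assumption~\assref{assumption2}) gives the ``only if'' direction. Your added remarks --- that $\bxhat \in \F$ must be carried as a standing hypothesis and that the dual feasibility condition strictly involves the dual cone $C^*$ (coinciding with $C$ only for self-dual cones) --- are sound refinements but do not alter the argument.
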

\begin{proof}
First suppose that $\bxhat$ is an $\epsilon$-approximate equilibrium.  Then, from Eq. \eqref{eq:defApproxEquil}, 
\[
\mathbf{f}( \bxhat)^T \bxhat - \epsilon \leq \mathbf{f}( \bxhat)^T \bx, \quad \forall \bx \in \F,
\] 
which is equivalent to 
$
\mathbf{f}( \bxhat)^T \bxhat - \epsilon \leq \min_{\bx \in \F} \mathbf{f}( \bxhat)^T \bx.
$
The right hand side is a conic optimization problem in $\bx$, and the above shows it is bounded below.  Since $\F$ has non-empty interior, strong duality holds (see \cite{boyd2004convex}), which implies that there exists a dual solution $\by$ that attains the optimum.  In other words,
\[
\min_{\bx \in \F} \mathbf{f}( \bxhat)^T \bx = \max_{\by : \bA^T \by \leq_C \mathbf{f}( \bxhat) } \bb^T \by.
\]
 Substituting this dual solution into the above inequality and rearranging terms yields the result.  The reverse direction is proven analogously using weak conic duality.  
\qed
\end{proof}
The above proof leverages the fact that the duality gap between an optimal primal and dual solution pair is zero.  We can instead formulate a slightly different characterization by leveraging complementary slackness.  In this case, Eq. \eqref{eq:StrongDuality} is replaced by the additional constraints
\begin{align}
\label{eq:CompSlack}
\sum_{i=1}^n x_i(f_i( \bxhat) - \by^T \bA \be_i) &\leq \epsilon.
\end{align}
Depending on the application, either the strong duality representation (cf. Eqs. \eqref{eq:DualFeas}, \eqref{eq:StrongDuality}) or the complementary slackness representation  (cf. Eqs. \eqref{eq:DualFeas}, \eqref{eq:CompSlack}  may be more natural.  We will use 
the strong duality formulation in Section~\ref{sec:TrafficEstimation}
and
the the complementary slackness formulation in Section~\ref{sec:DemandEstimation}.  

\section{The Inverse Variational Inequality Problem}
\label{sec:InverseVI}
\subsection{Problem Formulation}
We are now in a position to pose the inverse variational inequality problem.  
We are given observations $(\bx_j, \bA_j, \bb_j, C_j)$ for $j = 1, \ldots, N$.  In this context, we modify Assumption~\assref{assumption2} to read 
\begin{assumption*}
	The set $\mathcal{F}_j = \{ \bx \in \R^n : \bA_j \bx = \bb_j, \bx \in C_j \}$ is non-empty and satisfies a Slater condition for each $j$.
\end{assumption*}
This is not a particularly stringent condition; given data that does not satisfy it, we can always pre-process the data ensure it does satisfy this assumption.  

We seek a function $\mathbf{f}$ such that $\bx_j$ is an approximate solution to $\VI( \mathbf{f}, \bA_j, \bb_j, C_j)$ for each $j$.  Note, the function $\mathbf{f}$ is common to all observations.  Specifically, we would like to solve:
\begin{align}
\nonumber
\min_{\mathbf{f}, \bepsilon} \quad & \| \bepsilon \|
\\ \label{eq:Informal}
\text{s.t.} \quad & \bx_j \text{ is an $\epsilon_j$-approximate solution to } \VI(\mathbf{f}, \bA_j, \bb_j, C_j), \quad j = 1, \ldots, N, 
\\ \nonumber
& \mathbf{f} \in \mathcal{S}.
\end{align}
where $\| \cdot \|$ represents some choice of norm, and $\mathcal{S}$ represents the set of admissible functions.  In the parametric case, treated in the following section, we will assume that $\mathcal{S}$ is indexed by a vector of parameters $\btheta \in \Theta \subseteq \R^M$.  In the nonparametric case, $\mathcal{S}$ will be a general set of functions that satisfy certain smoothness properties.  We defer this extension until Section~\ref{Sec:NonParametric}.

\subsection{Parametric Estimation:}
\label{Sec:Parametric}
In this section, we assume that the function $\mathbf{f}$ is known to belong to a parametric family indexed by a vector $\btheta \in \Theta$.  We write $\mathbf{f}( \bx ; \btheta)$ to denote this dependence.  We will assume throughout that $\Theta$ is compact and $\mathbf{f}( \bx ; \btheta)$ is continuous in $\btheta$.  
A direct application of Theorem~\ref{thm:Aghassi} yields the following reformulation:
\begin{theorem} \label{thm:ParametricFormulation}
Under assumptions \ref{assumption1}, \ref{assumption2} and the additional constraint that $\mathbf{f} = \mathbf{f}(\bx; \btheta)$ for some $\btheta \in \Theta$, problem Eq.~\eqref{eq:Informal} can be reformulated as
\begin{align}
\label{eq:ParametricInvVI}
\min_{\btheta \in \Theta, \by, \bepsilon} \quad & \| \bepsilon \|
\\ \nonumber
\text{s.t.} \quad & \bA_j^T \by_j \leq_C \mathbf{f}( \bx_j; \btheta), \quad j = 1, \ldots N, 
\\ \nonumber
&\mathbf{f}( \bx_j; \btheta)^T \bx_j - \bb_j^T \by_j \leq \epsilon_j, \quad j = 1, \ldots, N,
\end{align}
where $\by = ( \by_1, \ldots, \by_N)$. 
\end{theorem}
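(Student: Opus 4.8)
The plan is to obtain the reformulation as a direct, observation-by-observation application of Theorem~\ref{thm:Aghassi}, followed by an aggregation over $j = 1, \ldots, N$ and the parametric substitution $\mathbf{f} \mapsto \btheta$. First I would take any $\mathbf{f}$ that is feasible for \eqref{eq:Informal}; by the parametric restriction we may write $\mathbf{f} = \mathbf{f}(\cdot;\btheta)$ for some $\btheta \in \Theta$. Fix $j$. The modified version of Assumption~\assref{assumption2} guarantees that $\mathcal{F}_j = \{\bx : \bA_j\bx = \bb_j, \bx \in C_j\}$ is non-empty and satisfies a Slater condition, so the hypotheses of Theorem~\ref{thm:Aghassi} hold with data $(\bA_j,\bb_j,C_j)$ and function $\mathbf{f}(\cdot;\btheta)$. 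Since $\bx_j$ is an $\epsilon_j$-approximate equilibrium, that theorem furnishes a vector $\by_j$ with $\bA_j^T\by_j \leq_{C_j} \mathbf{f}(\bx_j;\btheta)$ and $\mathbf{f}(\bx_j;\btheta)^T\bx_j - \bb_j^T\by_j \leq \epsilon_j$. Stacking these into $\by = (\by_1,\ldots,\by_N)$, the triple $(\btheta,\by,\bepsilon)$ is feasible for \eqref{eq:ParametricInvVI} with the same objective value $\|\bepsilon\|$.

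For the converse I would start from any $(\btheta,\by,\bepsilon)$ feasible for \eqref{eq:ParametricInvVI}. For each $j$ the two constraints of \eqref{eq:ParametricInvVI} are exactly Eqs.~\eqref{eq:DualFeas}--\eqref{eq:StrongDuality} of Theorem~\ref{thm:Aghassi} specialized to $\bxhat = \bx_j$, $\epsilon = \epsilon_j$, and data $(\bA_j,\bb_j,C_j)$; hence, again invoking the modified Assumption~\assref{assumption2} and the ``if'' direction of that theorem, $\bx_j$ is an $\epsilon_j$-approximate solution of $\VI(\mathbf{f}(\cdot;\btheta),\bA_j,\bb_j,C_j)$. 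Thus $\mathbf{f} = \mathbf{f}(\cdot;\btheta)$ together with $\bepsilon$ is feasible for \eqref{eq:Informal}, with identical objective. The two directions show that the feasible region of \eqref{eq:Informal} is precisely the projection onto $(\mathbf{f},\bepsilon)$ of the feasible region of \eqref{eq:ParametricInvVI} (after identifying an admissible $\mathbf{f}$ with its parameter $\btheta$), so the two problems share the same optimal value and their optimal solutions are in correspondence, which is the asserted reformulation.

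The one point that deserves a sentence of care is the existential quantifier on the dual certificates $\by_j$: Theorem~\ref{thm:Aghassi} only asserts the \emph{existence} of such a $\by_j$, whereas \eqref{eq:ParametricInvVI} treats $\by$ as a decision variable. The forward direction above resolves this by choosing, for any feasible $\mathbf{f}$, a $\by$ that is consistent with it, while the converse shows that any feasible $\by$ can be dropped; introducing $\by$ as a free variable to be minimized over therefore changes neither the attainable values of $\|\bepsilon\|$ nor feasibility. A minor bookkeeping check is that $\bepsilon$ enters both problems identically, so the objective is preserved verbatim. If in addition one wants the infimum in \eqref{eq:ParametricInvVI} to be attained, one can appeal to compactness of $\Theta$, continuity of $\mathbf{f}(\bx;\btheta)$ in $\btheta$, and closedness of the conic constraints, but this is not needed for the reformulation claim. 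I do not expect a genuine obstacle here; all of the mathematical content is already in Theorem~\ref{thm:Aghassi}, and the remaining work is the routine aggregation over $j$ and the substitution $\mathbf{f} \mapsto \btheta$.
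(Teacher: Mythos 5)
Your proposal is correct and matches the paper's approach: the paper itself gives no separate proof, stating only that the theorem follows by ``a direct application of Theorem~\ref{thm:Aghassi},'' which is exactly the observation-by-observation argument you spell out. Your added care about the existential quantifier on the dual certificates $\by_j$ is a reasonable elaboration of that one-line justification, not a departure from it.
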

\blue{\begin{remark}[Multiple equilibria]
We stress that since Theorem~\ref{thm:Aghassi} is true for any $\epsilon$-approximate solution to the VI, Theorem~\ref{thm:ParametricFormulation} is valid even when the function $\mathbf{f}$ might give rise to multiple distinct equilibria.  This robustness to multiple equilibria is an important strength of our approach that distinguishes it from other specialized approaches that require uniqueness of the equilibrium.
\end{remark} }
\blue{\begin{remark}[Equilibria on the boundary]
In Theorem~\ref{thm:Aghassi}, we did not need to assume that the $\bx_j$ or the solutions to $VI(\mathbf{f}, \bA_j, \bb_j)$ belonged to the interior of $\F_j$.  Consequently, Theorem~\ref{thm:ParametricFormulation} is valid even if the observations $\bx_j$ or induced solutions to $VI(\mathbf{f}, \bA_j, \bb_j)$ occur on the boundary.  This is in contrast to many other techniques which require that the solutions occur on the relative interior of the feasible set.  
\end{remark}}
\begin{remark}[Computational complexity]
Observe that $\bx_j$ are data in Problem~\eqref{eq:ParametricInvVI}, not decision variables.  Consequently, the complexity of this optimization depends on the cone $C$ and the dependence of $\mathbf{f}$ on $\btheta$, but \emph{not} on the dependence of $\mathbf{f}$ on $\bx$.  For a number of interesting parametric forms, we can show that Problem~\eqref{eq:ParametricInvVI} is in fact tractable.  
\end{remark}

As an example, suppose $\mathbf{f}(\bx; \btheta) = \sum_{i=1}^M \theta_i \bphi_i(\bx)$ where $\bphi_1( \bx ), \dots, \bphi_M( \bx )$ is a set of (nonlinear) basis functions.  Since $\mathbf{f}$ depends linearly on $\btheta$, Problem~\eqref{eq:ParametricInvVI} is a conic optimization problem, even though the basis functions $\phi_i(\bx)$ may be arbitrary nonlinear functions.  Indeed, if $C$ is the nonnegative orthant, Problem~\eqref{eq:ParametricInvVI} is a linear optimization problem.  Similarly, if $C$ is the second-order cone, Problem~\eqref{eq:ParametricInvVI} is a second-order cone problem.  

\blue{Finally, although structural estimation is not the focus of our paper, in Appendix~\ref{sec:Casting} 
we briefly illustrate 
how  to use Theorem \ref{thm:Aghassi} to formulate an alternate optimization problem that is similar to, but different from, Problem~\eqref{eq:ParametricInvVI} and 
closer in spirit to structural estimation techniques.  Moreover, we show that this formulation is equivalent to certain structural estimation techniques in the sense that they produce the same estimators.  This section may prove useful to readers interested in comparing these methodology.  }

\subsection{Application: Demand Estimation under Bertrand-Nash Competition}
\label{sec:ParametricDemand}
In this section, we use Theorem~\ref{thm:ParametricFormulation} to estimate an unknown demand function for a product so that observed prices are approximately in Bertrand-Nash equilibrium.  \blue{This is a somewhat stylized example inspired by various influential works in the econometrics literature, such as \cite{berry1995automobile} and \cite{berry1994estimating}.  We include this styled example for two reasons:
1) To illustrate a simple problem where equilibria may occur on the boundary of the feasible region.}
2) To further clarify how the choice of parameterization of $\mathbf{f}(\cdot; \boldsymbol{\theta})$ affects the computational complexity of the estimation problem.

For simplicity, consider two firms competing by setting prices $p_1, p_2$, respectively.  \blue{Demand for firm $i$'s product, denoted $D_i(p_1, p_2, \xi)$, is a function of both prices, and other economic indicators, such as GDP, denoted by $\xi$.}  
Each firm sets prices to maximize its own revenues $U_i( p_1, p_2, \xi) = p_i D_i(p_1, p_2, \xi)$ subject to the constraint $0 \leq p_i \leq \overline{p}$.  The upper bound $\overline{p}$ might be interpreted as a government regulation as is frequent in some markets for public goods, like electricity.  
\blue{We assume a priori that each demand function belongs to some given parametric family indexed by $(\btheta_1, \btheta_2) \in \Theta$: $D_1(p_1, p_2, \xi; \btheta_1), D_2(p_1, p_2, \xi; \btheta_2)$.  We seek to estimate $\btheta_1, \btheta_2 \in \Theta$ so that the data $(p_1^j, p_2^j, \xi)$ for $j = 1, \ldots, N$ correspond approximately to Nash equilibria.  }

\blue{Both \cite{berry1995automobile} and \cite{berry1994estimating} assume that equilibrium prices do not occur on the boundary, i.e., that $p_i < \overline{p}$ since they leverage Eq.~\eqref{eq:NashBad} in their analysis.  These methods are, thus, not directly applicable.  }

By contrast, Theorem~\ref{thm:ParametricFormulation} directly applies yielding (after some arithmetic)
{ \small
\begin{align} \nonumber 
\min_{\substack{\by, \bepsilon \\ (\btheta_1, \btheta_2) \in \Theta}}& \ \  \| \bepsilon \|
\\ \label{eq:DemandDeriv}
\text{s.t.} &\ \   \by^j \geq \bzero,  \ \  j = 1, \ldots, N,
\\ \nonumber
&y_i^j \geq p_i^j \frac{\partial}{\partial p_i} D_i(p_1^j, p_2^j, \xi^j; \btheta_i) + D_i(p_1^j, p_2^j, \xi^j; \btheta_i), \ \  i = 1, 2, \ j = 1, \ldots, N, 
\\ \nonumber
&\sum_{i=1}^2 
\overline{p}^j y_i^j -
(p_i^j)^2 \frac{\partial}{\partial p_i} D_i(p_1^j, p_2^j, \xi^j; \btheta_i) - p_i^j D_i(p_1^j, p_2^j, \xi^j; \btheta_i)  \leq \epsilon_j, \ \  j = 1, \ldots, N.  
\end{align}
}
We stress that potentially more complex constraints on the feasible region can be incorporated just as easily.  

Next, recall that the complexity of the optimization problem \eqref{eq:DemandDeriv} depends on the parameterization of $D_i(p_1, p_2, \xi, \btheta_i)$.  For example, when demand is linear, 
\begin{equation}
\label{eq:LinearDemand}
D_i(p_1, p_2, \xi; \btheta_i) = \theta_{i0} + \theta_{i1} p_1 + \theta_{i2}p_2 + \theta_{i3}\xi
\end{equation}
problem \eqref{eq:DemandDeriv} reduces to the linear optimization problem: 
\begin{align} \nonumber 
\min_{\by, \bepsilon, (\btheta_1, \btheta_2) \in \Theta, \bd} \quad & \| \bepsilon \|
\\ \nonumber
\text{s.t.} \quad &  \by^j \geq \bzero, \quad j = 1, \ldots, N,
\\ \nonumber
& y_i^j \geq d^j_i + \theta_{ii} p_i^j, \quad \quad  \quad i = 1, 2, \quad j = 1, \ldots, N, 
\\ \label{eq:DemandLP}
& \overline{p}\sum_{i=1}^2 
 y_i^j - p^j_id^j_i - (p_i^j)^2 \theta_{ii} \leq \epsilon_j, \quad j = 1, \ldots, N,
\\ \nonumber
& d_i^j = \theta_{i0} + \theta_{i1} p^j_1 + \theta_{i2}p^j_2 + \theta_{i3}\xi^j, \quad \quad i = 1, 2,  \quad j = 1, \ldots, N.  
\end{align}
Alternatively, if we assume demand is given by the multinomial logit model \cite{gallego2006price}, 
$
D_i(p_1, p_2, \xi; \btheta) = \frac{ e^{ \theta_{i0} + \theta_{i1} p_i + \theta_{i3}\xi} } 
						{ e^{ \theta_{10} + \theta_{11} p_1+ \theta_{13}\xi} +  e^{ \theta_{20} + \theta_{21} p_2 + \theta_{23}\xi} + \theta_{00} }, 
$
the problem \eqref{eq:DemandDeriv} becomes 
\begin{align*} 
\min_{\by, \bepsilon, \btheta_1, \btheta_2, \mathbf{d_1}, \mathbf{d_2} } \quad & \| \bepsilon \|
\\
\text{s.t.} \quad &  \by^j \geq \bzero, \quad j = 1, \ldots, N,
\\
& y_i^j \geq p_i^j \theta_{i1}d_1^jd_2^j  + d_i^j, \quad i = 1, 2,
\\
&\sum_{i=1}^2 \overline{p}^j y_i^j + p_i^j d_i^j - (p_i^j)^2 \theta_{1i} d_i^j (1-d_i^j)\leq \epsilon_j
\\
&d_i^j  = \frac{ e^{ \theta_{0i} + \theta_{i1} p^j_i + \theta_{i3}\xi^j} } 
						{ e^{ \theta_{10} + \theta_{11} p^j_1 + \theta_{13}\xi^j } +  e^{ \theta_{20} + 
							\theta_{21} p_2^j + \theta_{23}\xi} + \theta_{00} }, \quad i = 1, 2, j = 1, \ldots N, 
\end{align*}
which is non-convex.  Non-convex optimization problems can be challenging numerically and may scale poorly.  

{\blockblue
Finally, we point out that although it more common in the econometrics literature to specify the demand functions $D_i$ directly as we have above, one could equivalently specify the marginal revenue functions 
\[
M_i(p_1, p_2, \xi; \btheta_i)  = p_i \partial_i D_i(p_1, p_2, \xi; \btheta_i) + D_i(p_1, p_2, \xi; \btheta_i )
\]
and then impute the demand function as necessary.  We adopt this equivalent approach later in Section~\ref{sec:DemandEstimation}.
}

\section{Kernel Methods: Background}
\label{Sec:Kernels}
Intuitively, our nonparametric approach in the next section seeks the ``smoothest" function $\mathbf{f}$ which make the observed data approximate equilibria, where the precise notion of smoothness is determined by the choice of kernel.  Kernel methods have been used extensively in machine learning, most recently for feature extraction in context of support-vector machines or principal component analysis.  Our use of kernels, however, more closely resembles their application in spline interpolation and regularization networks (\cite{wahba1990spline}, \cite{girosi1993priors}).  

\blue{Our goal in this section is to develop a sufficiently rich set of scalar valued functions over which we can tractably optimize using kernel methods.  Consequently, we first develop some background.  Our review is not comprehensive.  A more thorough treatment of kernel methods can be found in either \cite{smola1998learning}, \cite{trevor2001elements} or \cite{evgeniou2000regularization}.}

\blue{ Let $\F \subseteq \R^n$ denote some domain. Let $k: \F \times \F \rightarrow \R$ be a symmetric function.  We will say that $k$ is a kernel if $k$ is positive semidefinite over $\F$, i.e., if
\[
\sum_{i=1}^N \sum_{j=1}^N c_i c_j k(\bx_i, \bx_j) \geq 0 \text{ for any choice of } N \in \mathbb{N}, 
\ \
\bc \in \R^N, 
\ \
\bx_i \in \F.
\]
Examples of kernels over $\R^n$ include:
\begin{description}
\item[Linear:] $k(\bx, \by) \equiv \bx^T\by$,  
\item[Polynomial:] $k(\bx, \by) \equiv ( c + \bx^T \by )^d$ for some choice of $c \geq 0$ and $d \in \mathbb{N}$,  
\item[Gaussian:] $k(\bx, \by) \equiv \exp( -c \| \bx - \by \|^2 )$ for some choice of $c >0 $.  
\end{description}
}

\blue{Let $k_\bx( \cdot ) \equiv k( \bx, \cdot)$ denote the function of one variable obtained by fixing the first argument of $k$ to $\bx$ for any $\bx \in \F$.  Define $\H_0$ to be the vector space of scalar valued functions which are representable as finite linear combinations of elements $k_\bx$ for some $\bx \in \F$, i.e., 
\begin{equation}
\label{eq:SpanofH0}
\H_0 = \left\{ \sum_{j=1}^N \alpha_j k_{\bx_j} : \bx_j \in \F,  \ N \in \mathbb{N},  \ \alpha_j \in \R, \ j = 1, \ldots, N, N \in \mathbb{N} \right\}.  
\end{equation}
Observe that $k_\bx \in \H_0$ for all $\bx \in \F$, so that in a sense these elements form a basis of the space $\H_0$.  On the other hand, for a given $f \in \H_0$, its representation in terms of these elements $k_{\bx_j}$ for $\bx_j \in \F$ need not be unique.  In this sense, the elements $k_\bx$ are not like a basis.
}

\blue{For any $f, g \in \H_0$ such that
\begin{align} \label{eq:ChoiceofRepresentation}
f = \sum_{j=1}^N \alpha_j k_{\bx_j}, 
\ \
g = \sum_{i=1}^N \beta_i k_{\bx_i},
\ \ \balpha, \boldsymbol{\beta} \in \R^N
\end{align} 
we define a scalar product
\begin{align}\label{eq:DefineInnerProduct}
\langle f, g \rangle_{\H_0} \ = \  \sum_{i=1}^N\sum_{j=1}^N \alpha_i \beta_j \langle k_{\bx_i}, k_{\bx_j} \rangle_{\H_0}
 \ 	\equiv  \ \sum_{i=1}^N\sum_{j=1}^N \alpha_i \beta_j k(\bx_i, \bx_j).
\end{align}
Since the representation in \eqref{eq:ChoiceofRepresentation} is not unique, for this to be a valid definition one must prove that the right-hand side of the last equality is independent of the choice of representation. It is possible to do so.  See \cite{smola1998learning} for the details.  Finally, given this scalar-product, we define the norm 
$
\| f \|_{\H_0} \equiv \sqrt{\langle f, f \rangle}_{\H_0}.
$
}

\blue{In what follows, we will actually be interested in the closure of $\H_0$, i.e., 
\begin{equation}
	\label{eq:SpanofH}
	\H = \overline{ \H_0 }.  
\end{equation}
We extend the scalar product $\langle \cdot, \cdot \rangle_{\H_0}$ and norm $\| \cdot \|_{\H_0}$ to $\H$ by continuity.  (Again, see \cite{smola1998learning} for the details).  Working with $\H$ instead of $\H_0$ simplifies many results.\footnote{\blue{For the avoidance of doubt, the closure in \eqref{eq:SpanofH} is with respect to the norm $\| \cdot \|_{\H_0}$.}}
}

As an example, in the case of the linear and polynomial kernels, the space $\H$ is finite dimensional and corresponds to the space of linear functions and the space of polynomials of degree at most $d$, respectively.  In the case of the Gaussian kernel, the space $\H$ is infinite dimensional and is a subspace of all continuous functions.  

If $f \in \H_0$ admits a finite representation as in Eq.\eqref{eq:ChoiceofRepresentation}, note that from Eq. \eqref{eq:DefineInnerProduct} we have for all  $\bx \in \F$
\begin{align}
\label{eq:RepProp}
\langle k_\bx, f \rangle_\H \ = \  \sum_{j=1}^N \alpha_j k(\bx, \bx_j)
\ = \  f(\bx).
\end{align}
In fact, it can be shown that this property applies to all $f \in \H$ (\cite{luenberger1997optimization}).  This is the most fundamental property of $\H$ as it allows us to relate the scalar product of the space to function evaluation.  Eq. \eqref{eq:RepProp} is termed the \emph{reproducing property} and as a consequence, $\H$ is called a \emph{Reproducing Kernel Hilbert Space (RKHS).}  

At this point, it may appear that RKHS are very restrictive spaces of functions.  In fact, it can be shown that any Hilbert space of scalar-valued functions for which there exists 
 a $c \in \R$ such that for each $f \in \H$, 
$
| f( \bx ) | \leq c \| f \|_\H$ for all $\bx \in \F$
 is an RKHS (\cite{luenberger1997optimization}).  Thus, RKHS are fairly general.    Practically speaking, though, our three previous examples of kernels  --linear, polynomial, and Gaussian --are by far the most common in the literature.   

We conclude this section with a discussion about the norm $\| f \|_\H$.  We claim that in each of our previous examples, the norm $\| f\|_\H$ makes precise a different notion of ``smoothness" of the function $f$.  For example, it is not hard to see that if $f(\bx) = \bw^T \bx$, then under the linear kernel $\| f \|_\H = \| \bw \|$.  Thus, functions with small norm have small gradients and are ``smooth" in the sense that they do not change value rapidly in a small neighborhood.  

Similarly, it can be shown (see \cite{girosi1993priors}) that under the Gaussian kernel, 
\begin{equation}
\label{eq:GaussianNorm}
\| f \|_\H^2 = \frac{1}{(2 \pi)^n} \int | \tilde{f}(\omega) |^2 e^{\frac{\|\omega\|^2}{2c}} d \omega, 
\end{equation}  
where $\tilde{f}$ is the Fourier transformation of $f$.  Thus, functions with small norms do not have many high-frequency Fourier coefficients and are ``smooth" in the sense that they do not oscillate very quickly.  

The case of the polynomial kernel is somewhat more involved as there does not exist a \emph{simple} explicit expression for the norm (see \cite{girosi1993priors}).  However, it is easily confirmed numerically using Eq. \eqref{eq:DefineInnerProduct} that functions with small norms do not have large coefficients and do not have have high degree.  Consequently, they are ``smooth" in the sense that their derivatives do not change value rapidly in a small neighborhood.  

Although the above reasoning is somewhat heuristic, it is possible to make the intuition that the norm on an RKHS describes a notion of smoothness completely formal.  The theoretical details go beyond the scope of this paper (see \cite{girosi1993priors}).  
For our purposes, an intuitive appreciation that the $\H$-norm penalizes non-smooth functions and that the particular notion of smoothness is defined by the kernel will be sufficient for the remainder.

\section{The Inverse Variational Inequality Problem: A Nonparametric Approach}
\label{Sec:NonParametric}
\subsection{Kernel Based Formulation}
In this section, we develop a nonparametric approach to the inverse variational inequality problem.  
The principal difficulty in formulating a nonparametric equivalent to \eqref{eq:Informal} is that the problem is ill-posed. Specifically, if the set $S$ is sufficiently rich, we expect there to be many, potentially infinitely many, different functions $\mathbf{f}$ which all reconcile the data, and make each observation an exact equilibrium.  Intuitively, this multiplicity of solutions is similar to the case of interpolation where, given a small set of points, many different functions will interpolate between them exactly.  Which function, then, is the ``right" one?  

We propose to select the function $\mathbf{f}$ of minimal $\H$-norm among those that approximately reconcile the data.  This choice has several advantages.  First, as mentioned earlier, functions with small norm are ``smooth", where the precise definition of smoothness will be determined by the choice of kernel.  We feel that in many applications, assuming that the function defining a VI is smooth is very natural.  Second, as we shall prove, identifying the function $\mathbf{f}$ with minimal norm is computationally tractable, even when the RKHS $\H$ is infinite dimensional.  Finally, as we will show in Section~\ref{sec:Generalization}, functions with bounded $\H$-norm will have good generalization properties.  

Using Theorem~\ref{thm:Aghassi}, we reformulate Problem~\eqref{eq:Informal} as

\begin{subequations}\label{eq:NonParametricConstrained}
\begin{align} \notag
\min_{\mathbf{f}, \by, \bepsilon} \quad &  \sum_{i=1}^n  \| f_i \|^2_{\H}
\\ \label{eq:dualfeasible}
\text{s.t.} \quad & 
\bA_j^T \by_j \leq \mathbf{f}(\bx_j), \quad  \quad \quad \quad  j = 1, \ldots, N, 
\\  \label{eq:dualitygap}
&\bx_j^T \mathbf{f}(\bx_j) - \bb_j^T\by_j  \leq \epsilon_j,  \quad j = 1, \ldots, N,
\\  \notag
& \| \bepsilon \| \leq \kappa, \quad \bepsilon \geq \bzero,  \quad 
f_i \in \H, \quad i = 1, \ldots, n, 
\\  \label{eq:normalization}
&\frac{1}{N}\sum_{j=1}^N \bx_j^T \mathbf{f}(\bx_j) = 1.
\end{align}
\end{subequations}

Here $f_i$ is the $i$-th component of the vector function $\mathbf{f}$ and $\H$ is an RKHS.   Since we may always scale the function $\mathbf{f}$ in $\VI(\mathbf{f}, \F)$ by a positive constant without affecting the solution, we require the last constraint as a normalization condition.  
Finally, the exogenous parameter $\kappa$ allows us to balance the norm of $\mathbf{f}$ against how closely $\mathbf{f}$ reconciles the data; decreasing $\kappa$ will make the observed data closer to equilibria at the price of $\mathbf{f}$ having greater norm.  

Problem~\eqref{eq:NonParametricConstrained} is an optimization over functions, and it is not obvious how to solve it.   We show in the next theorem, however, that this can be done in a tractable way.  This theorem is an extension of a representation theorem from the kernel literature (see \cite{wahba1990spline}) to the constrained multivariate case.  See the appendix for a proof.

\begin{theorem} \label{Prop:Finite}
Suppose Problem~\eqref{eq:NonParametricConstrained} is feasible.  Then, there exists an optimal solution $\mathbf{f}^* = (f^*_1, \ldots, f^*_n)$ with the following form:
\begin{equation}
\label{eq:KernelExpansion}
f^*_i = \sum_{j=1}^N \alpha_{i,j} k_{\bx_j},
\end{equation}
for some $\alpha_{i,j} \in \R$, where $k$ denotes the kernel of $\H$.  
\end{theorem}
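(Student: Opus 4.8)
This is a representer-theorem style result. The plan is to decompose each component function $f_i \in \H$ into a part lying in the span of $\{k_{\bx_j}\}_{j=1}^N$ and a part orthogonal to it, and argue the orthogonal part can be set to zero without affecting feasibility while not increasing the objective.

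\medskip

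\textbf{Proof plan.} Let $\mathcal{V} = \mathrm{span}\{k_{\bx_1}, \ldots, k_{\bx_N}\} \subseteq \H$, a finite-dimensional (hence closed) subspace of the RKHS. For any optimal solution $(\mathbf{f}, \by, \bepsilon)$ of Problem~\eqref{eq:NonParametricConstrained} — which exists since the problem is feasible, the feasible set is closed, and the objective is coercive in the relevant sense — write each component via the orthogonal decomposition $f_i = f_i^\parallel + f_i^\perp$, where $f_i^\parallel \in \mathcal{V}$ and $f_i^\perp \perp \mathcal{V}$, so $\langle k_{\bx_j}, f_i^\perp\rangle_\H = 0$ for all $j$. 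First I would observe that the reproducing property \eqref{eq:RepProp} gives $f_i(\bx_j) = \langle k_{\bx_j}, f_i \rangle_\H = \langle k_{\bx_j}, f_i^\parallel \rangle_\H = f_i^\parallel(\bx_j)$ for every $j$ and every $i$; hence $\mathbf{f}(\bx_j) = \mathbf{f}^\parallel(\bx_j)$ for all $j$. Since every constraint of Problem~\eqref{eq:NonParametricConstrained} — the dual feasibility \eqref{eq:dualfeasible}, the duality-gap bound \eqref{eq:dualitygap}, and the normalization \eqref{eq:normalization} — involves $\mathbf{f}$ only through the evaluations $\mathbf{f}(\bx_j)$, the triple $(\mathbf{f}^\parallel, \by, \bepsilon)$ is also feasible.

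\medskip

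Next I would compare objective values. By the Pythagorean identity in $\H$, $\|f_i\|_\H^2 = \|f_i^\parallel\|_\H^2 + \|f_i^\perp\|_\H^2 \geq \|f_i^\parallel\|_\H^2$ for each $i$, so $\sum_{i=1}^n \|f_i^\parallel\|_\H^2 \leq \sum_{i=1}^n \|f_i\|_\H^2$. Thus $(\mathbf{f}^\parallel, \by, \bepsilon)$ is feasible with objective no larger than that of the optimal $(\mathbf{f}, \by, \bepsilon)$, so it too is optimal. Finally, since $f_i^\parallel \in \mathcal{V} = \mathrm{span}\{k_{\bx_1}, \ldots, k_{\bx_N}\}$, we may write $f_i^\parallel = \sum_{j=1}^N \alpha_{i,j} k_{\bx_j}$ for suitable scalars $\alpha_{i,j} \in \R$, which is exactly the claimed form \eqref{eq:KernelExpansion}. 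Relabeling $\mathbf{f}^\parallel$ as $\mathbf{f}^*$ completes the argument.

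\medskip

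\textbf{Main obstacle.} The routine steps (orthogonal decomposition, reproducing property, Pythagoras) are standard; the one point deserving care is the existence of an optimal solution over the infinite-dimensional space $\H^n$, which is implicitly asserted by the phrase "there exists an optimal solution." One must check that the infimum is attained: the feasible set is nonempty by hypothesis, and one can restrict attention to the closed ball $\{\mathbf{f} : \sum_i \|f_i\|_\H^2 \le \sum_i \|f_i^0\|_\H^2\}$ around any feasible $\mathbf{f}^0$, on which a minimizing sequence has a weakly convergent subsequence (Hilbert space, bounded set); weak continuity of the evaluation functionals $f_i \mapsto f_i(\bx_j) = \langle k_{\bx_j}, f_i\rangle_\H$ preserves feasibility in the limit, and weak lower semicontinuity of the norm gives optimality of the limit. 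Alternatively — and more in the spirit of a representer theorem — one sidesteps this entirely by noting that the decomposition argument shows the infimum over $\H^n$ equals the infimum over the finite-dimensional set $\mathcal{V}^n$, where attainment is classical; this is the cleaner route and the one I would present.
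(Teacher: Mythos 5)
Your proof is correct and follows essentially the same route as the paper's: orthogonally project each component onto $\mathrm{span}\{k_{\bx_1},\ldots,k_{\bx_N}\}$, use the reproducing property to see that all constraints (which depend on $\mathbf{f}$ only through the evaluations $\mathbf{f}(\bx_j)$) are preserved, and use the Pythagorean identity to see the objective does not increase. Your added discussion of attainment of the infimum is a point the paper's proof glosses over, and your observation that the projection argument reduces the problem to the finite-dimensional set $\mathcal{V}^n$, where attainment is classical, is the right way to close that gap.
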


By definition of $\H$, when Problem \eqref{eq:NonParametricConstrained} is feasible, its solution is a
potentially infinite expansion in terms of the kernel function evaluated at various points of $\F$.  The importance of Theorem~\ref{Prop:Finite} is that it allows us to conclude, first, that this expansion is in fact finite, and second, that the relevant points of evaluation are exactly the data points $\bx_j$.  This fact further allows us to replace the optimization problem \eqref{eq:NonParametricConstrained}, which is over an infinite dimensional space, with an optimization problem over a finite dimensional space.

\begin{theorem} \label{thm:QP}
\blue{Problem \eqref{eq:NonParametricConstrained} is feasible if and only if the following optimization problem is feasible:}
\begin{align} \nonumber
\min_{\balpha, \by, \bepsilon} \quad & \sum_{i=1}^n \be_i^T\balpha \bK  \balpha^T \be_i
\\ \nonumber
\text{s.t.}  \quad & 
\bA_j \by_j \leq \balpha \bK \be_j \quad j = 1, \ldots, N,
\\ \nonumber
& \bx_j^T \balpha \bK \be_j - \bb_j^T \by_j \leq \epsilon_j \quad j = 1, \ldots, N,
\\ \label{eq:QP} 
& \| \bepsilon\|  \leq \kappa, \quad \bepsilon \geq \bzero,  
\\ \nonumber
& \frac{1}{N}\sum_{j=1}^N \bx_j^T \balpha \bK \be_j = 1.
\end{align}
Here $\balpha = (\alpha_{ij})_{i=1, j=1}^{i=n, j = N} \in \R^{n \times N}$, and $\bK = (k( \bx_i, \bx_j))_{i, j=1}^{i, j=N}$.  
\blue{Moreover, given an optimal solution $\balpha$ to the above optimization problem, an optimal solution to Problem~\eqref{eq:NonParametricConstrained} is given by Eq.~\eqref{eq:KernelExpansion}.}
\end{theorem}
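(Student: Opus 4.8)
The plan is to use Theorem~\ref{Prop:Finite} together with the reproducing property to transfer Problem~\eqref{eq:NonParametricConstrained}, an optimization over the (possibly infinite-dimensional) space $\H^n$, onto the finite-dimensional program \eqref{eq:QP} over $\balpha\in\R^{n\times N}$. The observation that makes this work, and which I would record first, is that every constraint of \eqref{eq:NonParametricConstrained} --- \eqref{eq:dualfeasible}, \eqref{eq:dualitygap} and \eqref{eq:normalization} --- involves $\mathbf{f}$ only through the finitely many vectors $\mathbf{f}(\bx_1),\dots,\mathbf{f}(\bx_N)$, and no other function values of $\mathbf{f}$ appear anywhere; the objective is the only place $\|\cdot\|_\H$ enters. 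I would then prove two directions: (i) every feasible point of \eqref{eq:QP} induces a feasible point of \eqref{eq:NonParametricConstrained} of the form \eqref{eq:KernelExpansion} with equal objective, and (ii) every feasible point of \eqref{eq:NonParametricConstrained} can be ``compressed'' to one of the form \eqref{eq:KernelExpansion}, hence to a feasible point of \eqref{eq:QP}, without increasing the objective.

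For direction (i), I would take $(\balpha,\by,\bepsilon)$ feasible for \eqref{eq:QP} and set $f_i=\sum_{j=1}^N\alpha_{ij}k_{\bx_j}\in\H_0\subseteq\H$. By the reproducing property \eqref{eq:RepProp}, $f_i(\bx_\ell)=\sum_j\alpha_{ij}k(\bx_j,\bx_\ell)$, so that $\mathbf{f}(\bx_\ell)$ is exactly the $\ell$-th column $\balpha\bK\be_\ell$ of $\balpha\bK$; substituting this identity turns \eqref{eq:dualfeasible}, \eqref{eq:dualitygap}, \eqref{eq:normalization} into their counterparts in \eqref{eq:QP}, while the constraints on $\bepsilon$ are literally identical. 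By \eqref{eq:DefineInnerProduct} and $\bK=\bK^T$, $\|f_i\|_\H^2=\sum_{j,j'}\alpha_{ij}\alpha_{ij'}k(\bx_j,\bx_{j'})=\be_i^T\balpha\bK\balpha^T\be_i$, so the objectives agree term by term. Hence $(\mathbf{f},\by,\bepsilon)$ is feasible for \eqref{eq:NonParametricConstrained} with the same objective; in particular feasibility of \eqref{eq:QP} implies feasibility of \eqref{eq:NonParametricConstrained}, and the optimal value of \eqref{eq:NonParametricConstrained} is at most that of \eqref{eq:QP}.

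For direction (ii), I would take any feasible $(\mathbf{f},\by,\bepsilon)$ for \eqref{eq:NonParametricConstrained} and set $\mathcal{V}=\mathrm{span}\{k_{\bx_1},\dots,k_{\bx_N}\}\subseteq\H$, a finite-dimensional --- hence closed --- subspace, so the orthogonal decomposition $f_i=f_i^{\parallel}+f_i^{\perp}$ with $f_i^{\parallel}\in\mathcal{V}$, $f_i^{\perp}\perp\mathcal{V}$ exists. Since $k_{\bx_\ell}\in\mathcal{V}$, \eqref{eq:RepProp} gives $f_i(\bx_\ell)=\langle k_{\bx_\ell},f_i\rangle_\H=\langle k_{\bx_\ell},f_i^{\parallel}\rangle_\H=f_i^{\parallel}(\bx_\ell)$ for all $\ell$, so $\mathbf{f}$ and $\mathbf{f}^{\parallel}$ agree at every data point and $(\mathbf{f}^{\parallel},\by,\bepsilon)$ is still feasible; moreover $\|f_i\|_\H^2=\|f_i^{\parallel}\|_\H^2+\|f_i^{\perp}\|_\H^2\ge\|f_i^{\parallel}\|_\H^2$, so the objective does not increase. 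As $\mathbf{f}^{\parallel}$ has the form \eqref{eq:KernelExpansion}, reading off its coefficients $\balpha$ and reusing the identities from (i) shows $(\balpha,\by,\bepsilon)$ is feasible for \eqref{eq:QP} with objective no larger; thus feasibility of \eqref{eq:NonParametricConstrained} implies feasibility of \eqref{eq:QP}, and the two optimal values coincide.

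Finally, for the ``moreover'' clause I would take an optimal $\balpha$ for \eqref{eq:QP} (existence being part of the hypothesis, though one may note it follows from the Frank--Wolfe theorem for quadratic programs since \eqref{eq:QP} is feasible with a positive-semidefinite objective bounded below by $0$): direction (i) produces from it a feasible $\mathbf{f}$ for \eqref{eq:NonParametricConstrained} given by \eqref{eq:KernelExpansion} whose objective equals the common optimal value, so $\mathbf{f}$ is optimal. I expect no serious obstacle here: the only step needing a word of care is that the orthogonal projection in direction (ii) exists, which is guaranteed because $\mathcal{V}$ is finite-dimensional and hence closed; everything else is bookkeeping with the reproducing property \eqref{eq:RepProp} and the inner-product formula \eqref{eq:DefineInnerProduct}.
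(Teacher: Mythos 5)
Your proposal is correct and follows essentially the same route as the paper: one direction substitutes the expansion \eqref{eq:KernelExpansion} into \eqref{eq:NonParametricConstrained} using the reproducing property and the inner-product formula, and the other compresses an arbitrary feasible $\mathbf{f}$ onto $\mathrm{span}\{k_{\bx_1},\dots,k_{\bx_N}\}$ without changing the constraint values or increasing the norm. The only cosmetic difference is that you inline the orthogonal-projection argument, whereas the paper simply cites Theorem~\ref{Prop:Finite} (whose proof is exactly that projection).
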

See the appendix for a proof.  
Given the optimal parameters $\balpha, \mathbf{f}$ can be evaluated at new points $\bt$ using \eqref{eq:KernelExpansion}. 
Note that $\bK$ is positive semidefinite (as a matrix) since $k$ is positive definite (as a function).  Thus, \eqref{eq:QP} is a convex, quadratic optimization problem.  Such optimization problems are very tractable numerically and theoretically, even for large-scale instances.  (See \cite{boyd2004convex}).  Moreover, this quadratic optimization problem exhibits block structure -- only the variables $\balpha_j$ couple the subproblems defined by the $\by_j$ --- which can be further exploited in large-scale instances.  Finally, the size of this optimization scales with $N$, the number of observations, not with the dimension of the original space $\mathcal{H}$, which may be infinite.

\blue{Observe that Problem~\eqref{eq:NonParametricConstrained} is bounded, but may be infeasible.  
We claim it will be feasible whenever $\kappa$ is sufficiently large.  Indeed, let $\mathbf{\hat{f}}_i \in \H$ be any functions from the RKHS.  By scaling, we can always ensure \eqref{eq:normalization} is satisfied.  The following convex optimization
$\min_{\bx : \bA_j \bx = \bb_j, \bx \geq \bzero } \mathbf{\hat{f}}(\bx_j)^T \bx $ is bounded and satisfies a Slater condition by Assumption \assref{assumption2}.  Let $\hat{\by}_j$ be the dual variables to this optimization, so that $\hat{\by}_j$ satisfy \eqref{eq:dualfeasible} and define $\hat{\epsilon}_j$ according to \eqref{eq:dualitygap}.  Then as long as $\kappa \geq \| \hat{\bepsilon} \|$, Problem~\eqref{eq:NonParametricConstrained}, and consequently Problem~\eqref{eq:QP}, will be feasible and obtain an optimal solution. }

\blue{Computationally, treating the possible infeasibility of \eqref{eq:QP} can be cumbersome, so in what follows, we find it more convenient to dualize this constraint 
so that the objective becomes, 
\begin{equation} \label{eq:DualizedObjective}
\min_{\balpha, \by}  \balpha^T K \balpha + \lambda 
\| \bepsilon \|,
\end{equation}
and then solve this problem for various choices of $\lambda>0$.  Note this version of the problem is always feasible, and, indeed, we will employ this formulation later in Sec. \ref{sec:Numerics}.  }

We conclude this section by contrasting our parametric and nonparametric formulations.  
Unlike the parametric approach, the nonparametric approach is always a convex optimization problem.  This highlights a key tradeoff in the two approaches.  The parametric approach offers us fine-grained control over the specific form of the function $\mathbf{f}$ at the potential expense of the tractability of the optimization.  The nonparametric approach offers less control but is more tractable.  

We next illustrate our nonparametric approach below with an example.  

\subsection{Application: Estimating the Cost Function in Wardrop Equilibrium.}
\label{Sec:ExampleTraffic}
Recall the example of Wardrop equilibrium from Section~\ref{Sec:Examples}.  In practice, while the network $(\mathcal{V}, \A)$ is readily observable, the demands $\bd^w$ and cost function $c_a( \cdot )$ must be estimated.  Although several techniques already exist for estimating the demands $\bd^w$ (\cite{Abrahamsson}, \cite{yang1992estimation}), there are fewer approaches for estimating $c_a(\cdot)$.  Those techniques that do exist often use stylized networks, e.g., one origin-destination pair, to build insights.  See \cite{Nakayama2007MethodEstimating} for a maximum likelihood approach, and \cite{perakis2006analytical} for kinematic wave analyses.

By contrast, we focus on estimating $c_a( \cdot)$ from observed flows or traffic counts on real, large scale networks.  \blue{Specifically, we assume we are given networks $(\mathcal{V}_j, \A_j)$, $j=1, \ldots, N$,  and have access to estimated demands on these networks $\bd^{\bw_j}$ for all $\bw_j \in W_j$.  In practice, this may be the same network observed at different times of day, or different times of year, causing each observation to have different demands.}  

In the transportation literature, one typically assumes that $c_a ( \cdot)$ only depends on arc $a$, and in fact, can be written in the form 
$\label{eq:FormofCongestion}
c_a( x_a) = c_{0a}g\left(\frac{x_a}{m_a}\right),
$
for some nondecreasing function $g$.  The constant $c_{0a}$ is sometimes called the free-flow travel time of the arc, and $m_a$ is the effective capacity of the arc.  These constants are computed from particular characteristics of the arc, such as its length, the number of lanes or the posted speed limit.  (Note the capacity $m_a$ is not a hard constraint; it not unusual to see arcs where $x^*_a > m_a$ in equilibrium.)    We will also assume this form for the cost function, and seek to estimate the function $g ( \cdot )$.  

Using \eqref{eq:QP} and \eqref{eq:DualizedObjective} we obtain the quadratic optimization problem
\begin{align} \label{eq:TrafficQP}
\min_{\balpha, \by, \bepsilon} \quad & \balpha^T  \bK \balpha + \lambda \| \bepsilon \|
\\ \nonumber
\text{s.t.} \quad &\be_a^T \bN_j^T \by^{\bw} \leq c_{0a} \balpha^T \bK \be_{a}, 
\quad  \forall \bw \in W_j, \ a \in \A_j, \ j=1, \ldots, N,  
\\ \label{eq:NonDecreasing}
& \balpha^T \bK \be_a \leq \balpha^T \bK \be_{a^\prime}, \quad \quad \quad \forall a, a^\prime \in \A_0 \quad \text{s.t.} \   \frac{x_a}{m_a} \leq \frac{x_{a^\prime}}{m_{a^\prime}},
\\ \nonumber
&\sum_{a \in \A_j} c_{0a} x_a \balpha^T \bK \be_a -
\sum_{\bw \in W_j} (\bd^{\bw})^T \by^{\bw} \leq \epsilon_j, \quad, \forall \bw \in W_j,  \ j=1, \ldots, N, 
\\ \nonumber
&
\balpha^T \bK \be_{a_0} = 1.
\end{align}
\blue{In the above formulation $\A_0$ is a subset of $\bigcup_{j=1}^N \A_j$ and $\bK \in \R^{\sum_{j=1}^N | \A_j |  \times \sum_{j=1}^N | \A_j | }$. Constraint \eqref{eq:NonDecreasing} enforces that the function $g(\cdot)$ be non-decreasing on these arcs.  Finally, $a_0$ is some (arbitrary) arc chosen to normalize the function. } 

\blue{Notice, the above optimization can be quite large.  If the various networks are of similar size, the problem has $O(N (| \A_1 | + | W_1 | | \mathcal{V}_1 |)$ variables and $O( N |W_1| |\A_1| + |\A_0| )$ constraints.  As mentioned previously, however, this optimization exhibits significant structure.  First, for many choices of kernel, the matrix $K$ is typically (approximately) low-rank.  Thus, it is usually possible to reformulate the optimization in a much lower dimensional space.  At the same time, for a fixed value of $\alpha$, the optimization decouples by $\bw \in W_j$ and $j$.  Each of these subproblems, in turn, is a shortest path problem which can be solved very efficiently, even for large-scale networks.  Thus, combining an appropriate transformation of variables with block decomposition, we can solve fairly large instances of this problem.  We take this approach in Section~\ref{sec:TrafficEstimation}. }

\section{Extensions}
\label{sec:extensions}
Before proceeding, we note that Theorem~\ref{Prop:Finite} actually holds in a more general setting.  Specifically, a minimization over an RKHS will admit a solution of the form \eqref{eq:KernelExpansion} whenever
\begin{enumerate}[label=\alph*)]
\item the optimization only depends on the norms of the components $\| f_i \|_\H$ and the function evaluated at a finite set of points $\mathbf{f}( \bx_j)$, and
\item the objective is nondecreasing in the norms $\| f_i \|_\H$.
\end{enumerate}
The proof is identical to the one presented above, and we omit it for conciseness.  An important consequence is that we can leverage the finite representation of Theorem~\ref{Prop:Finite} in a number of other estimation problems and to facilitate inference.  In this section, we describe some of these extensions.   

\subsection{Incorporating Priors and Semi-Parametric Estimation}
Suppose we believe a priori that the function $\mathbf{f}$ describing the VI should be close to a particular function $\mathbf{f}_0$ (a prior).  In other words, $\mathbf{f} = \mathbf{f}_0 + \mathbf{g}$ for some function $\mathbf{g}$ which we believe is small.  We might then solve
\begin{align} \nonumber 
\min_{\mathbf{g}, \by, \bepsilon} \quad &  \sum_{i=1}^n  \| g_i \|^2_{\H}
\\ \nonumber
\text{s.t.} \quad & 
\bA_j^T \by_j \leq \mathbf{f}_0(\bx_j) + \mathbf{g}(\bx_j) \quad  \quad \quad \quad  j = 1, \ldots, N, 
\\ \nonumber
&\bx_j^T (\mathbf{f}_0(\bx_j) + \mathbf{g}(\bx_j))  - \bb_j^T\by_j  \leq \epsilon_j  \quad j = 1, \ldots, N,
\\ \nonumber
& \| \bepsilon \| \leq \kappa, \quad \bepsilon \geq \bzero, 
\ \ 
g_i \in \H, \quad \quad \quad i = 1, \ldots, n. 
\end{align}
From our previous remarks, it follows that this optimization is equivalent to
\begin{align} \nonumber
\min_{\balpha, \by, \bepsilon} \quad & \sum_{i=1}^n \be_i^T\balpha \bK  \balpha^T \be_i
\\ \nonumber
\text{s.t.}  \quad & 
\bA_j \by_j \leq \mathbf{f}_0 ( \bx_j) + \balpha \bK \be_j \quad j = 1, \ldots, N
\\ \nonumber
& \bx_j^T (\mathbf{f}_0(\bx_j) + \balpha \bK \be_j) - \bb_j^T \by_j \leq \epsilon_j \quad j = 1, \ldots, N,
\\ \nonumber
& \| \bepsilon \| \leq \kappa, \quad \bepsilon \geq \bzero,  
\end{align}
which is still a convex quadratic optimization problem. 

In a similar way we can handle semi-parametric variants where $\mathbf{f}$ decomposes into the sum of two functions, one of which is known to belong to a parametric family and the other of which is defined nonparmetrically, i.e., $\mathbf{f} ( \cdot ) = \mathbf{f}_0( \cdot ; \btheta) + \mathbf{g}$ for some $\btheta$ and $\mathbf{g} \in \H^n$.  

\begin{remark}[A Challenge with Partial Derivatives]
There are natural modeling circumstances where Theorem~\ref{Prop:Finite} is not applicable.  For example, recall in our demand estimation example from Section~\ref{sec:ParametricDemand} that the inverse variational inequality problem depends not only on the demand functions $D_1( \cdot ), D_2( \cdot )$ evaluated at a finite set of points $(p_1^j, p_2^j)$, but also on their partial derivatives at those points.  Intuitively, the partial derivative $\partial_i D_i ( p_1^j, p_2^j)$ requires information about the function in a small neighborhood of $(p_1^j, p_2^j)$, not just at the point, itself.  Consequently, Theorem~\ref{Prop:Finite} is not applicable.  Extending the above techniques to this case remains an open area of research.  
\end{remark}

{ \blockblue
\subsection{Ambiguity sets}
\label{sec:ambiguity}
In many applications, there may be multiple distinct models which all reconcile the data equally well.  Breiman termed this phenomenon the ``Rashomon" effect.  It can occur even with parametric models that are well-identified, since there may exist models outside the parametric family which will also reconcile the data. 
Consequently, we would often like to identify the range of functions which may explain our data, and how much they differ.

We can determine this range by computing the upper and lower envelopes of the set of all functions within an RKHS that make the observed data approximate equilibria.  We call this set the ambiguity set for the estimator.  To construct these upper and lower bounds on the ambiguity set, consider fixing the value of $\kappa$ in \eqref{eq:NonParametricConstrained} and replacing the objective by $f_i( \bxhat)$ for some $\bxhat \in \F$.  This optimization problem satisfies the two conditions listed at the beginning of this section.  Consequently, Theorem~\ref{Prop:Finite} applies, and we can use the finite representation to rewrite the optimization problem as a linear optimization problem in $\balpha, \by$.  Using software for linear optimization, it is possible to generate lower and upper bounds on the function $\mathbf{f}(\bxhat)$ for various choices of $\bxhat$ quickly and efficiently.  

To what value should we set the constant $\kappa$?  One possibility is to let $\kappa$ be the optimal objective value of \eqref{eq:ParametricInvVI} , or a small multiple of it.  This choice of $\kappa$ yields the set of functions which ``best" reconcile the given data.  We discuss an alternative approach in Section~\ref{sec:Generalization} that yields a set of functions which are statistically similar to the current estimator.    

Regardless of how we choose, $\kappa$, though, ambiguity sets can be combined with our previous parametric formulations to assess the appropriateness of the particular choice of parametric family.  Indeed, the ambiguity set formed from the nonparametric kernel contains a set of alternatives to our parametric form which are, in some sense, equally plausible from the data.  If these alternatives have significantly different behavior from our parametric choice, we should exercise caution when interpreting the fitted function.  

Can we ever resolve the Rashomon effect?  In some cases, we can use application-specific knowledge to identify a unique choice.  In other cases, we need appeal to some extra, a priori criterion.  A typical approach in machine learning is to focus on a choice with good generalizability properties.  In the next section, we show that our proposed estimators enjoy such properties.  } 

{
\blockblue
\section{Generalization Guarantees}
\label{sec:Generalization}
In this section, we seek to prove generalization guarantees on the estimators from Problem~\eqref{eq:ParametricInvVI} and \eqref{eq:NonParametricConstrained}.  Proving various types of generalization guarantees for various algorithms is a central problem in machine learning.  These guarantees ensure that the performance of our estimator on new, future data will be similar to its observed performance on existing data. 

We impose a mild assumption on the generating process which is common throughout the machine learning literature:
\begin{assumption} \label{assumption:dataIID}
	The data $(\bx_j, \bA_j, \bb_j, C_j)$ are i.i.d. realizations of random variables $(\tilde{\bx}, \tilde{\bA}, \tilde{\bb}, \tilde{C})$ drawn from some probability measure $\P$.  
\end{assumption}
Notice, we make no assumptions on potential dependence between $(\tilde{\bx}, \tilde{\bA}, \tilde{\bb}, \tilde{C})$, nor do we need to know the precise form of $\P$.  We also assume
\begin{assumption} \label{ass:randomSlater} 
	The random set $\mathcal{\tilde{F}} = \{ \bx : \tilde{\bA} \bx = \tilde{\bb}, \bx \in \tilde{C} \}$ satisfies a Slater Condition almost surely.
\end{assumption}
\begin{assumption} \label{ass:Membership}
	$\tilde{\bx} \in \mathcal{\tilde{F}}$ almost surely.  
\end{assumption}
Assumptions \ref{ass:randomSlater} and \ref{ass:Membership} are not particularly stringent.  If these condition may fail, we can consider pre-processing the data so that they succeed, and then consider a new measure $\mathbb{Q}$ induced by this processing of $\P$.  

We now prove a bound for a special case of Problem~\eqref{eq:ParametricInvVI}.  Let $z_N, \btheta_N$ denote the optimal value and optimal solution of \eqref{eq:ParametricInvVI}.  If for some $N$, there exist multiple optimal solutions, choose $\btheta_N$ by some tie-breaking rule, e.g., the optimal solution with minimal $\ell_2$-norm.  For any $0< \alpha < 1$, define
\[
\beta(\alpha) \equiv \sum_{i=0}^{\text{dim}(\btheta)} \binom{N}{i} \alpha^i (1-\alpha)^{N-i}.
\]
\begin{theorem}\label{thm:Campi}
Consider Problem~\eqref{eq:ParametricInvVI} where the norm $\| \cdot \| = \| \cdot \|_\infty$.  Suppose that this problem is convex in $\btheta$ and that Assumptions~\assref{assumption1}, \assref{assumption:dataIID}-\assref{ass:Membership} hold. 
Then, for any $0 < \alpha < 1$, with probability at least $1-\beta(\alpha)$ with respect to the sampling, 
\begin{align*}
\P\left( \tilde{\bx} \text{ is a $z_N$-approximate equilibrium for } \VI(\mathbf{f}(\cdot, \btheta_N), \tilde{\bA}, \tilde{\bb}, \tilde{C}) \right) \geq 1 -\alpha. 
\end{align*}
\end{theorem}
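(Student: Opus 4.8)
The plan is to recognize Problem~\eqref{eq:ParametricInvVI} with $\|\cdot\| = \|\cdot\|_\infty$ as a \emph{scenario convex program} in the sense of Calafiore and Campi and then to invoke their generalization bound. The first step is to eliminate the per-observation dual vectors $\by_j$ and the slack vector $\bepsilon$. Since $\|\bepsilon\|_\infty = \max_j \epsilon_j$, introduce a single scalar $z$ playing the role of this maximum. By Theorem~\ref{thm:Aghassi} (applicable because the modified Slater assumption holds, which in turn follows from Assumption~\assref{ass:randomSlater}), the pair of constraints attached to observation $j$ is satisfiable for \emph{some} $\by_j$ with $\epsilon_j \le z$ if and only if $\bx_j$ is a $z$-approximate equilibrium for $\VI(\mathbf{f}(\cdot,\btheta),\bA_j,\bb_j,C_j)$ (here we use that an $\epsilon$-approximate equilibrium is also $z$-approximate for every $z \ge \epsilon$, directly from \eqref{eq:defApproxEquil}). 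Writing
\[
\mathcal{G}(\bx,\bA,\bb,C) \;=\; \big\{ (\btheta,z) \in \Theta\times\R : \bx \text{ is a } z\text{-approximate equilibrium for } \VI(\mathbf{f}(\cdot,\btheta),\bA,\bb,C) \big\},
\]
Problem~\eqref{eq:ParametricInvVI} is therefore equivalent, in the sense of having the same optimal value $z_N$ and the same optimal $(\btheta,z)$-component $(\btheta_N,z_N)$, to the program $\min z$ subject to $(\btheta,z)\in\mathcal{G}(\bx_j,\bA_j,\bb_j,C_j)$ for $j=1,\dots,N$. This is an optimization in $\dim(\btheta)+1$ decision variables with a linear objective. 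Under the convexity hypothesis, each constraint of \eqref{eq:ParametricInvVI} is convex in all of its variables, and the constraints carrying index $j$ involve only $(\btheta,\by_j,\epsilon_j)$; hence $\mathcal{G}(\bx_j,\bA_j,\bb_j,C_j)$ is the image of a convex set under the projection onto $(\btheta,z)$, and is thus convex. Compactness of $\Theta$, continuity of $\mathbf{f}(\cdot,\btheta)$ in $\btheta$, and the fact that $z$ is bounded below by $0$ (weak conic duality together with $\bx_j\in\F_j$, cf.\ Assumption~\assref{ass:Membership}) ensure a minimizer exists; feasibility holds almost surely because $z$ may always be taken large; and the tie-breaking rule in the statement disposes of non-uniqueness.

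The second step is to apply the scenario theorem to this reduced program. Under Assumption~\assref{assumption:dataIID} the constraint sets $\mathcal{G}(\bx_j,\bA_j,\bb_j,C_j)$ are i.i.d.\ convex constraints induced by the measure $\P$; the Calafiore--Campi bound for a convex scenario program with $n$ decision variables then states that, with probability at least $1 - \sum_{i=0}^{n-1}\binom{N}{i}\alpha^i(1-\alpha)^{N-i}$ over the sample, the optimizer $(\btheta_N,z_N)$ has violation probability at most $\alpha$, i.e.\ $\P\big( (\btheta_N,z_N)\notin\mathcal{G}(\tilde\bx,\tilde\bA,\tilde\bb,\tilde C) \big) \le \alpha$. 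Here $n = \dim(\btheta)+1$, so $\sum_{i=0}^{n-1}\binom{N}{i}\alpha^i(1-\alpha)^{N-i} = \beta(\alpha)$ exactly. Unwinding the definition of $\mathcal{G}$ — and using Assumption~\assref{ass:Membership} so that $\tilde\bx\in\tilde\F$ and Theorem~\ref{thm:Aghassi} applies to the new realization — the complement of the event $(\btheta_N,z_N)\in\mathcal{G}(\tilde\bx,\tilde\bA,\tilde\bb,\tilde C)$ is precisely the event that $\tilde\bx$ fails to be a $z_N$-approximate equilibrium for $\VI(\mathbf{f}(\cdot,\btheta_N),\tilde\bA,\tilde\bb,\tilde C)$. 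This yields the claimed inequality.

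The main obstacle is the reduction in the first step: as written, Problem~\eqref{eq:ParametricInvVI} is \emph{not} a standard scenario program, because each observation carries its own decision vector $\by_j$, and a naive application would pay $N\cdot\dim(\by)$ extra dimensions and produce a vacuous bound. The three facts that rescue the argument are (i) the objective depends only on the shared variables $(\btheta,z)$, so projecting out the $\by_j$ is lossless for both the optimal value and the optimal $(\btheta,z)$; (ii) projection preserves convexity, so the projected constraints remain convex scenario constraints in $\dim(\btheta)+1$ variables; and (iii) Theorem~\ref{thm:Aghassi} identifies this projection with the geometrically meaningful and sampling-invariant event ``$\bx_j$ is a $z$-approximate equilibrium,'' which is exactly what makes the resulting violation probability interpretable as in the theorem statement. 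A secondary point requiring care is verifying the regularity hypotheses of the scenario theorem (almost-sure feasibility, attainment of the minimum, and the convex-program structure after projection); all of these follow from compactness of $\Theta$, continuity of $\mathbf{f}(\cdot,\btheta)$, and Assumptions~\assref{ass:randomSlater}--\assref{ass:Membership}.
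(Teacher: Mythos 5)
Your proposal is correct and follows essentially the same route as the paper: both project out the per-observation dual variables $\by_j$ and the slacks, recast the problem as $\min z$ subject to $(\btheta,z)$ lying in the intersection of $N$ i.i.d.\ convex scenario constraints (the paper's set $\mathcal{X}$ is exactly your $\mathcal{G}$ via Theorem~\ref{thm:Aghassi}), and invoke the Calafiore--Campi scenario bound with $\dim(\btheta)+1$ decision variables to obtain $\beta(\alpha)$. Your write-up is somewhat more explicit about why the projection is lossless and convexity-preserving and about the regularity hypotheses, but these are elaborations of the same argument, not a different one.
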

The proof relies on relating Problem~\eqref{eq:ParametricInvVI} to an uncertain convex program \cite{campi2008exact}, and leveraging results on the randomized counterparts of such programs.  See the appendix for the details.

\begin{remark}
There are two probability measures in the theorem.  The first (explicit) is the probability measure of the new data point $(\tilde{\bx}, \tilde{\bA}, \tilde{\bb}, \tilde{C})$.  The second (implicit) is the probability measure of the random quantities $z_N$, $\btheta_N$.  One way to interpret the theorem is as follows:  One can ask, ``For a fixed pair $z_N, \btheta_N$, is the probability that $\bx_{N+1}$ is a $z_N$-approximate equilibrium for  $\VI(\mathbf{f}(\cdot, \btheta_N), \bA_{N+1}, \bb_{N+1}, C_{N+1})$ with respect to the first measure at least $1-\alpha$?"  The theorem asserts the answer is, ``Yes" with probability at least $1-\beta(\alpha)$ with respect to the second measure.  More loosely, the theorem asserts that for ``typical" values of $z_N, \btheta_N$, the answer is ``yes."  This type of generalization result, i.e., a result which is conditional on the data-sampling measure, is typical in machine learning.  
\end{remark}
\begin{remark}
Notice that $\beta(\alpha)$ corresponds to the tail probability of a binomial distribution, and, hence, converges exponentially fast in $N$.  
\end{remark}
\begin{remark}[$\ell_1$ Regularization] \label{rem:L1Reg}
The value $\beta(\alpha)$ depends strongly on the dimension of $\theta$.  In \cite{campi2013random}, the authors show that including an $\ell_1$ regularization of the form $\| \theta \|_1$  to reduce the effective dimension of $\theta$ can significantly improve the above bound in the context of uncertain convex programs.\footnote{In fact, the authors show more:  they give an algorithm leveraging $\ell_1$ regularization to reduce the dimensionality of $\theta$ and then an improved bound based on the reduced dimension.  The analysis of this improved bound can be adapted to our current context at the expense of more notation. We omit the details for space.}  Motivated by this idea, we propose modifying our original procedure by including a regularization $\lambda \| \btheta \|_1$ in the objective of Problem~\eqref{eq:ParametricInvVI}.  Since the problem is convex this formulation is equivalent to including a constraint of the form $ \| \btheta \|_1 \leq \kappa$ for some value of $\kappa$ that implicitly depends on $\lambda$, and, consequently,  Theorem~\ref{thm:Campi} still applies but with $z_N$ redefined to exclude the contribution of the regularization to the objective value.  
\end{remark}

Unfortunately, the proof of Theorem~\ref{thm:Campi} doesn't generalize easily to other problems, such as other norms or Problem~\eqref{eq:NonParametricConstrained}.  A more general approach to proving generalization bounds is based upon Rademacher complexity.  Rademacher complexity is a popular measure of the complexity of a class of functions, related to the perhaps better known VC-bounds.  Loosely speaking, for function classes with small Rademacher complexity, empirical averages of functions in the class converge to their true expectation uniformly over the class, and there exist bounds on the rate of convergence which are tight up to constant factors.  We refer the reader to \cite{bartlett2003rademacher} for a formal treatment.  

We will use bounds based upon the Rademacher complexity of an appropriate class of functions to prove generalization bounds for both our parametric and nonparametric approaches.  In the case of our nonparametric approach, however, it will prove easier to analyze the following optimization problem instead of Problem~\eqref{eq:NonParametricConstrained}:
\begin{align} \notag
\min_{\mathbf{f}, \by, \bepsilon} \quad &  \frac{ \| \bepsilon \|_p^p}{N} 
\\ \notag
\text{s.t.} \quad & 
\bA_j^T \by_j \leq \mathbf{f}(\bx_j), \quad  \quad \quad \quad  j = 1, \ldots, N, 
\\  \label{eq:NonParametricRad}
&\bx_j^T \mathbf{f}(\bx_j) - \bb_j^T\by_j  \leq \epsilon_j,  \quad j = 1, \ldots, N,
\\  \notag
& \| f_i \|^2_{\H} \leq \kappa_i, \quad
f_i \in \H, \quad i = 1, \ldots, n, 
\\  \notag
&\frac{1}{N}\sum_{j=1}^N \bx_j^T \mathbf{f}(\bx_j) = 1,
\end{align}
for some fixed $p$, $1 \leq p < \infty$.  We have made two alterations from Problem~\eqref{eq:NonParametricConstrained} with the dualized objective \eqref{eq:DualizedObjective}.  First, using Lagrangian duality, we have moved the term $\lambda \sum_{i=1}^n \| f_i \|_{\H}$ from the objective to the constraints.  Indeed, for any value of $\lambda$, there exists values $\kappa_i$ so that these two problems are equivalent.  Second, we have specialized the choice of norm to a $p$-norm, and then made an increasing transformation of the objective.  
If we can show that solutions to Problem~\eqref{eq:NonParametricRad} enjoy strong generalization guarantees, Problem~\eqref{eq:NonParametricConstrained} should satisfy similar guarantees.  Now, introduce

\begin{assumption}	
\label{ass:boundedFeas} 
	The set $\mathcal{\tilde{F}}$ is contained within a ball of radius $R$ almost surely.  
\end{assumption}

Next, we introduce some additional notation.  Consider Problem~\eqref{eq:ParametricInvVI}.  Define
\begin{equation}
\label{eq:ParametricB}
2 \sup_{\substack{ \bx: \| \bx \|_2 \leq R \\ \btheta \in \Theta}}  \| \mathbf{f}(\bx ; \btheta) \|_2 \equiv  \overline{B}.
\end{equation}
Observe $\overline{B} < \infty$.  Let $\mathbf{f}_N = \mathbf{f}(\cdot; \btheta_N)$ denote the function corresponding to the optimal solution of Problem~\eqref{eq:ParametricInvVI}.  With a slight abuse of language, we call $\mathbf{f}_N$ a solution to Problem~\eqref{eq:ParametricInvVI}.  

We define analogous quantities for Problem~\eqref{eq:NonParametricRad}.  Given a kernel $k(\cdot, \cdot)$, let $\overline{K}^2 \equiv \sup_{\bx : \| \bx \|_2 \leq R } k( \bx, \bx)$.  Notice, if $k$ is continuous, $\overline{K}$ is finite by \assref{ass:boundedFeas}.  For example, 
\begin{equation} \label{eq:Kbar}
\overline{K}^2
= \begin{cases}  R^2 & \text{ for the linear kernel }
				\\  (c + R^2 )^d & \text{ for the polynomial kernel }
				\\ 1 & \text{ for the Gaussian kernel }
	\end{cases} 
\end{equation}
With a slight abuse of notation, let $z_N, \mathbf{f}_N$ denote the optimal value and an optimal solution to Problem~\eqref{eq:NonParametricRad}, and let  $\overline{B} \equiv 2 R \overline{K} \sqrt{ \sum_{i=1}^n \kappa_i^2 }$.  This mild abuse of notation allows us to express our results in a unified manner.  It will be clear from context whether we are treating Problem~\eqref{eq:ParametricInvVI} or Problem~\eqref{eq:NonParametricRad}, and consequently be clear which definition of $\mathbf{f}_N, \overline{B}$ we mean.  

Finally, define $\epsilon(\mathbf{f}_N, \tilde{\bx}, \tilde{\bA}, \tilde{\bb}, \tilde{C})$  
 to be the smallest $\epsilon \geq 0$ such that $\tilde{\bx}$ is an $\epsilon$-approximate solution to $\VI(\mathbf{f}_N, \tilde{\bA}, \tilde{\bb}, \tilde{C})$.  

\begin{theorem} \label{thm:Rademacher}
Let $z_N, \mathbf{f}_N$ be the optimal objective and an optimal solution to Problem~\eqref{eq:ParametricInvVI} or \eqref{eq:NonParametricRad}.  
Assume \assref{assumption1}, \assref{assumption:dataIID}-\assref{ass:boundedFeas}.   For any $0 < \beta < 1$, with probability at least $1-\beta$ with respect to the sampling,
\begin{enumerate} [label=\roman*)]
\item 
\begin{equation} \label{eq:ExpectationBound}
\E[ (\epsilon(\mathbf{f}_N, \tilde{\bx}, \tilde{\bA}, \tilde{\bb}, \tilde{C}))^p ] \leq
z_N + \frac{1}{\sqrt{N}} \left(4 p \overline{B}^p + 2\overline{B}^{p/2} \sqrt{2 \log( 2/\beta ) } \right).
\end{equation}
\item For any $\alpha > 0$, 
\begin{align*}
\P( \tilde{\bx} \text{ is a } z_N + &\text{$\alpha$-approximate equilibrium for} \VI( \mathbf{f}_N, \tilde{\bA}, \tilde{\bb}, \tilde{C}) ) \\ 
&\geq 
1 - \frac{1}{\alpha^p\sqrt{N}}  \left(4 p \overline{B}^p + 2\overline{B}^{p/2} \sqrt{2 \log( 2/\beta ) } \right).
\end{align*}
\end{enumerate}
\end{theorem}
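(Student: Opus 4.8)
The plan is to recast both statements as instances of a single uniform-convergence bound for a class of loss functions, with the complexity of that class controlled via Rademacher averages. For a candidate $\mathbf{f}$ and a data point $(\bx,\bA,\bb,C)$, write $\ell(\mathbf{f};\bx,\bA,\bb,C)\equiv \epsilon(\mathbf{f},\bx,\bA,\bb,C)$ for the smallest $\epsilon$ making $\bx$ an $\epsilon$-approximate solution. From Eq.~\eqref{eq:defApproxEquil},
\[
\ell(\mathbf{f};\bx,\bA,\bb,C)\;=\;\max_{\bx'\in\F}\,\mathbf{f}(\bx)^{T}(\bx-\bx')\;=\;\mathbf{f}(\bx)^{T}\bx\;+\;h_{\F}\!\bigl(-\mathbf{f}(\bx)\bigr),
\]
where $\F=\{\bx':\bA\bx'=\bb,\ \bx'\in C\}$ and $h_{\F}$ is its support function. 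Three facts follow: (a) $\ell\ge 0$ on the support of $\P$, since $\bx\in\F$ a.s. by Assumption~\ref{ass:Membership}; (b) $\ell\le\overline{B}$, since $\F$ lies in a ball of radius $R$ (Assumption~\ref{ass:boundedFeas}) and $\|\mathbf{f}(\bx)\|_{2}$ is uniformly bounded there --- by definition Eq.~\eqref{eq:ParametricB} in the parametric case, and via the reproducing property $|f_{i}(\bx)|\le\|f_{i}\|_{\H}\sqrt{k(\bx,\bx)}\le\sqrt{\kappa_{i}}\,\overline{K}$ for Problem~\eqref{eq:NonParametricRad}; and (c) by strong conic duality --- which holds because of Assumption~\ref{assumption1} and the a.s.\ Slater condition of Assumption~\ref{ass:randomSlater} --- an optimal solution $\mathbf{f}_{N}$ of Problem~\eqref{eq:ParametricInvVI} or \eqref{eq:NonParametricRad} forces $\epsilon_{j}=\ell(\mathbf{f}_{N};\bx_{j},\bA_{j},\bb_{j},C_{j})$ at each $j$, so that $z_{N}=\tfrac1N\sum_{j=1}^{N}\ell(\mathbf{f}_{N};\bx_{j},\bA_{j},\bb_{j},C_{j})^{p}$ is precisely the empirical risk of $\mathbf{f}_{N}$ under the loss $\ell^{p}$ (for Problem~\eqref{eq:ParametricInvVI}, after the monotone reparametrization of the objective norm).

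Next I would let $\mathcal{G}=\{(\bx,\bA,\bb,C)\mapsto \ell(\mathbf{f};\bx,\bA,\bb,C)^{p}:\mathbf{f}\text{ admissible}\}$, a class of functions with values in $[0,\overline{B}^{p}]$, and invoke the classical symmetrization and bounded-differences argument (e.g.\ \cite{bartlett2003rademacher}): under Assumption~\ref{assumption:dataIID}, with probability at least $1-\beta$ over the sample, for every admissible $\mathbf{f}$,
\[
\E\bigl[\ell(\mathbf{f};\tilde{\bx},\tilde{\bA},\tilde{\bb},\tilde{C})^{p}\bigr]\;\le\;\tfrac1N\sum_{j=1}^{N}\ell(\mathbf{f};\bx_{j},\bA_{j},\bb_{j},C_{j})^{p}\;+\;2\,\mathcal{R}_{N}(\mathcal{G})\;+\;\overline{B}^{p}\sqrt{\tfrac{\log(2/\beta)}{2N}}.
\]
Specializing to $\mathbf{f}=\mathbf{f}_{N}$ and using fact (c) gives part (i), once $\mathcal{R}_{N}(\mathcal{G})$ is bounded. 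Part (ii) then follows from part (i) by Markov's inequality applied to $\ell(\mathbf{f}_{N};\tilde{\bx},\tilde{\bA},\tilde{\bb},\tilde{C})^{p}$, together with the elementary inequality $(z_{N}+\alpha)^{p}\ge\alpha^{p}$ valid for $p\ge 1$.

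It remains to bound $\mathcal{R}_{N}(\mathcal{G})$, which is the heart of the argument, and I would do it in two contraction steps. First, since $t\mapsto t^{p}$ is nonnegative and $p\overline{B}^{p-1}$-Lipschitz on $[0,\overline{B}]$, the Ledoux--Talagrand contraction inequality reduces $\mathcal{R}_{N}(\mathcal{G})$ to $p\overline{B}^{p-1}$ times the Rademacher average of the base class $\{(\bx,\bA,\bb,C)\mapsto\ell(\mathbf{f};\cdot)\}$. Second, because $\mathbf{v}\mapsto \mathbf{v}^{T}\bx+h_{\F}(-\mathbf{v})$ is convex and $2R$-Lipschitz on $\R^{n}$ (again by Assumption~\ref{ass:boundedFeas}) and $\ell(\mathbf{f};\cdot)$ depends on $\mathbf{f}$ only through $\mathbf{f}(\bx)\in\R^{n}$, a vector-valued contraction bound reduces the base class to the $n$ scalar coordinate classes $\{\bx\mapsto f_{i}(\bx)\}$. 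These last Rademacher averages are the familiar ones: for Problem~\eqref{eq:NonParametricRad} the $i$-th class is a ball of radius $\sqrt{\kappa_{i}}$ in the RKHS $\H$, with empirical Rademacher average at most $\sqrt{\kappa_{i}}\,\overline{K}/\sqrt{N}$ (cf.\ Eq.~\eqref{eq:Kbar}); for Problem~\eqref{eq:ParametricInvVI}, when $\mathbf{f}$ is linear in $\btheta$ over a bounded $\Theta$ the coordinate classes are bounded linear classes, again of average $O(1/\sqrt{N})$. Assembling the pieces and recalling the definition of $\overline{B}$ in each case yields the stated $O(p\overline{B}^{p}/\sqrt{N})$ contribution.

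I expect the main obstacle to be the second contraction step. The loss is a function of the vector $\mathbf{f}(\bx)\in\R^{n}$, so one must use a genuinely multivariate contraction principle to pass from the Lipschitz map $\mathbf{v}\mapsto\mathbf{v}^{T}\bx+h_{\F}(-\mathbf{v})$ down to the coordinate classes without picking up an extra factor of $n$, and additional care is needed because the map itself (through $h_{\F}$) changes with the random data point. A secondary subtlety is the coordinate bound in the parametric case: it genuinely relies on $\mathbf{f}(\cdot;\btheta)$ having the assumed tractable form (e.g.\ linear in $\btheta$); for a general parametric family one would have to replace $\overline{B}$ in Eq.~\eqref{eq:ParametricB} by a quantity also reflecting the complexity of $\Theta$.
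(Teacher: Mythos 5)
Your overall architecture coincides with the paper's: identify $z_N$ with the empirical risk of the loss $(\bx,\bA,\bb,C)\mapsto\epsilon(\mathbf{f};\bx,\bA,\bb,C)^p$, bound the loss a.s.\ by $\overline{B}^p$ exactly as you do (via strong duality, \assref{ass:boundedFeas}, and Cauchy--Schwartz with the reproducing property in the RKHS case), invoke the Bartlett--Mendelson uniform deviation bound for $[0,\overline{B}^p]$-valued classes, peel off the power $p$ by the scalar contraction inequality with Lipschitz constant $p\overline{B}^{p-1}$, and obtain part (ii) from part (i) by Markov's inequality. The one place you genuinely diverge is the step you yourself flag as the obstacle: bounding the Rademacher complexity of the base class $F=\{\epsilon(\mathbf{f};\cdot)\}$. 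The paper does \emph{not} contract down to the coordinate classes $\{f_i\}$ at all; it bounds $\mathcal{R}_N(F)\leq 2\overline{B}/\sqrt{N}$ in one line directly from the a.s.\ bound $\epsilon\leq\overline{B}$, by Cauchy--Schwartz against the Rademacher vector $(\sigma_1,\ldots,\sigma_N)$, using no structure of the function class beyond boundedness. Consequently the multivariate contraction through $\mathbf{v}\mapsto\mathbf{v}^T\bx+h_{\F}(-\mathbf{v})$, the data-dependence of that map, the potential factor of $n$, and the dependence on the complexity of $\Theta$ --- all of which you correctly identify as the hard points of your route --- simply do not arise in the paper's argument.

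Two consequences for your write-up. First, as it stands your proposal has a gap exactly where it departs from the paper: you do not supply the vector-valued contraction principle you need (and the standard Ledoux--Talagrand comparison does not directly handle a Lipschitz map that varies with the data point through $h_{\F}$), so the claimed $O(p\overline{B}^p/\sqrt{N})$ bound is not established by your argument. Your route is the more informative one --- carried through, it would yield a bound reflecting the actual richness of the hypothesis class rather than only the uniform bound $\overline{B}$ --- but it is strictly harder than what the theorem requires, and you should either complete it (citing a vector contraction lemma and handling the data-dependent Lipschitz map) or fall back on the paper's cruder boundedness argument. Second, a small point on part (ii): Markov applied to $\epsilon^p$ together with $(z_N+\alpha)^p\geq\alpha^p$ gives the numerator $z_N+R_N$ rather than $R_N$ (where $R_N$ is the $O(1/\sqrt{N})$ term), so the stated inequality does not follow verbatim from the chain you describe; the paper's own treatment of this step is equally terse, but you should be aware that some massaging (e.g., applying Markov to a recentered quantity) is needed to land exactly on the displayed bound.
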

\begin{remark}
	To build some intuition, consider the case $p=1$.  The quantity $z_N$ is the average error on the data set for $\mathbf{f}_N$.  The theorem shows with high-probability, $\mathbf{f}_N$ will make a new data point an $\epsilon$-approximate equilibrium, where $\epsilon$ is only $O(1/\sqrt{N})$ larger than $z_N$.  In other words, the fitted function will perform not much worse than the average error on the old data.  Note, this does not guarantee that $z_N$ is small.  Indeed, $z_N$ will only be small if in fact a $\VI$ is a good model for the system.
\end{remark}
\begin{remark}[Specifying Ambiguity Sets] \label{rem:Ambiguity}
We can use Theorem~\ref{thm:Rademacher} to motivate an alternate proposal for specifying $\kappa$ in ambiguity sets as in Section~\ref{sec:extensions}.  Specifically, let $R_N$ denote the second term on the righthand side of \eqref{eq:ExpectationBound}.  Given another feasible function $\mathbf{f}^\prime$ in Problem~\eqref{eq:NonParametricRad} whose objective value is strictly greater than $z_N + R_N$, we can claim that with probability at least $1-\beta$, $\mathbf{f}_N$ has a smaller expected approximation error than $\mathbf{f}^\prime$.  However, if the objective value of $\mathbf{f}^\prime$ is smaller than $z_N + R_N$, we cannot reject it at level $1-\beta$; it is statistically as plausible as $\mathbf{f}_N$.  Setting $\kappa = R_N$ in our ambiguity set recovers all such ``statistically plausible" functions.
\end{remark}

Theorems~\ref{thm:Campi} and \ref{thm:Rademacher} provide a guarantee on the generalization error of our method.  We may also be interested in its predictive power.  Namely, given a new point  $(\bx_{N+1}, \bA_{N+1}, \bb_{N+1}, C_{N+1})$, let
$\hat{\bx}$  be a solution to $\VI(\mathbf{f}_N, \bA_{N+1}, \bb_{N+1}, C_{N+1}))$
The value $\hat{\bx}$ is a prediction of the state of a system described by $(\bA_{N+1}, \bb_{N+1}, C_{N+1})$ using our fitted function and $\bx_{N+1}$ represents true state of that system.  We have the following theorem:
\begin{theorem} \label{thm:Predictive} 
Assume $\mathbf{f}_N$ is strongly monotone with parameter $\gamma$.
\begin{enumerate}[label=\roman*)]
\item Suppose the conditions of Theorem~\ref{thm:Campi} hold.  Then, for any $0 < \alpha < 1$, with probability at least $1-\beta(\alpha)$ with respect to the sampling, 
\[
\| \bx_{N+1} - \hat{\bx} \| \leq \sqrt{\frac{z_N}{\gamma}}.
\]
\item Suppose the conditions of Theorem~\ref{thm:Rademacher} hold. Then, for any $0 < \beta < 1$, with probability at least $1-\beta$ with respect to the sampling, for any $\alpha > 0$
\[
\P\left( \| \bx_{N+1} - \hat{\bx} \| > \sqrt{\frac{z_N + \alpha}{\gamma}} \right) \leq 
\frac{1}{\alpha^p\sqrt{N}}  \left(4 p \overline{B}^p + 2\overline{B}^{p/2} \sqrt{2 \log( 2/\beta ) } \right).
\]
\end{enumerate}
\end{theorem}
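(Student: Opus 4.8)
The plan is to combine the two generalization guarantees already in hand -- Theorem~\ref{thm:Campi} for part i) and Theorem~\ref{thm:Rademacher} for part ii) -- with the a posteriori error bound for strongly monotone variational inequalities, Theorem~\ref{thm:PangApprox}. The bridge is the observation that when $\mathbf{f}_N$ is strongly monotone with parameter $\gamma$, the problem $\VI(\mathbf{f}_N, \bA_{N+1}, \bb_{N+1}, C_{N+1})$ has a \emph{unique} solution, namely $\hat{\bx}$; hence the ``exact solution'' to which an approximate solution is guaranteed to be close in Theorem~\ref{thm:PangApprox} can only be $\hat{\bx}$. Existence of $\hat{\bx}$ is in any case built into the statement, and also follows from strong monotonicity together with Assumption~\assref{assumption1}, the Slater condition, and continuity of $\mathbf{f}_N$.

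For part i), I would argue as follows. Theorem~\ref{thm:Campi} states that, with probability at least $1-\beta(\alpha)$ over the draw of the $N$ samples, the event
\[
E \;=\; \big\{\, \bx_{N+1} \text{ is a } z_N\text{-approximate equilibrium for } \VI(\mathbf{f}(\cdot,\btheta_N), \bA_{N+1}, \bb_{N+1}, C_{N+1}) \,\big\}
\]
has probability at least $1-\alpha$ with respect to the $(N+1)$-st sample. On $E$, Theorem~\ref{thm:PangApprox} applied with $\epsilon = z_N$, $\F = \mathcal{F}_{N+1}$, together with the uniqueness remark above, gives $\| \bx_{N+1} - \hat{\bx} \| \le \sqrt{z_N/\gamma}$. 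Tracking the two nested probability statements then yields the claim.

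For part ii), the argument is parallel but quantitative. Fix $\alpha > 0$. By Theorem~\ref{thm:Rademacher}, part ii), with probability at least $1-\beta$ over the sampling,
\[
\P\big( \epsilon(\mathbf{f}_N, \bx_{N+1}, \bA_{N+1}, \bb_{N+1}, C_{N+1}) > z_N + \alpha \big) \;\le\; \frac{1}{\alpha^p\sqrt{N}}\Big(4 p \overline{B}^p + 2\overline{B}^{p/2}\sqrt{2\log(2/\beta)}\Big).
\]
Applying Theorem~\ref{thm:PangApprox} to the unique solution $\hat{\bx}$ of $\VI(\mathbf{f}_N, \bA_{N+1}, \bb_{N+1}, C_{N+1})$ with $\epsilon = \epsilon(\mathbf{f}_N, \bx_{N+1}, \bA_{N+1}, \bb_{N+1}, C_{N+1})$ gives $\gamma\,\|\bx_{N+1} - \hat{\bx}\|^2 \le \epsilon(\mathbf{f}_N, \bx_{N+1}, \ldots)$, whence
\[
\Big\{ \| \bx_{N+1} - \hat{\bx} \| > \sqrt{(z_N+\alpha)/\gamma} \Big\} \;\subseteq\; \Big\{ \epsilon(\mathbf{f}_N, \bx_{N+1}, \ldots) > z_N + \alpha \Big\}.
\]
Taking probabilities and chaining with the previous display gives the stated bound.

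The chaining itself is routine; the points that require care are (a) verifying that Theorem~\ref{thm:PangApprox} applies with $\F = \mathcal{F}_{N+1}$, i.e., that the strong monotonicity hypothesis on $\mathbf{f}_N$ is global enough to cover the (random) feasible set of the new instance, and that $\hat{\bx}$ exists and is unique there; and (b) keeping the two nested probability measures distinct, so that the ``with probability $1-\beta$ (resp.\ $1-\beta(\alpha)$) over the sampling'' qualifier is not merged with the probability over $(\bx_{N+1}, \bA_{N+1}, \bb_{N+1}, C_{N+1})$. I expect (b) to be the only genuine subtlety; once the event inclusions are written out carefully, both parts follow in a few lines.
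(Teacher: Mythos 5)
Your proposal is correct and follows essentially the same route as the paper's own proof: combine Theorem~\ref{thm:PangApprox} with Theorem~\ref{thm:Campi} (resp.\ Theorem~\ref{thm:Rademacher}) via the event inclusion $\{\|\bx_{N+1}-\hat{\bx}\| > \sqrt{(z_N+\alpha)/\gamma}\} \subseteq \{\epsilon(\mathbf{f}_N,\bx_{N+1},\ldots) > z_N+\alpha\}$. Your explicit handling of the two nested probability measures and of the uniqueness of $\hat{\bx}$ under strong monotonicity is in fact more careful than the paper's terse argument, which leaves both points implicit.
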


In words, Theorem~\ref{thm:Predictive} asserts that solutions to our $\VI$ using our fitted function serve as good predictions to future data realizations.  This is an important strength of our approach as it allows us to predict future behavior of the system.  Again, this is contingent on the fact that $z_N$ is small, i.e., that the $\VI$ well-explains the current data.  

We conclude this section by noting that experimental evidence from machine learning suggests that bounds such as those above based on Rademacher complexity can be loose in small-samples.  The recommended remedy is that, when computationally feasible, to use a more numerically intensive method like cross-validation or bootstrapping to estimate approximation and prediction errors.  This approach applies equally well to choosing parameters like the threshold in an ambiguity set $\kappa$ as described in Remark~\ref{rem:Ambiguity}.  We employ both approaches in Section~\ref{sec:Numerics}.    
}
\section{Computational Experiments}
\label{sec:Numerics}
{ \blockblue

In this section, we provide some computational experiments illustrating our approach.  
For concreteness, we focus on our two previous examples: estimating the demand function in Bertrand-Nash equilibrium from Sec. \ref{sec:ParametricDemand} and estimating cost functions in traffic equilibrium from Sec. \ref{Sec:ExampleTraffic}.  

Before providing the details of the experiments, we summarize our major insights.  
\begin{enumerate}
\item In settings where there are potentially many distinct functions that explain the data equally well, our nonparametric ambiguity sets are able to identify this set of functions.  By contrast, parametric methods may misleadingly suggest there is only one possible function.  
\item Even in the presence of endogenous, correlated noise, our parametric and nonparametric techniques are able to learn functions with good generalizability, even if the specified class does not contain the true function generating the data.
\item Sometimes, the functions obtained by our method are not strongly monotone.  Nonetheless, they frequently still have reasonable predictive power.    
\end{enumerate}
}

{\blockblue
\subsection{Bertrand-Nash Equilibrium (Full-Information)}
\label{sec:DemandEstimation}
We first consider an idealized, full-information setting to illustrate the importance of our ambiguity set technique.  Specifically, we assume the true, demand functions are given by the nonlinear model 
\[
D^*_i(p_1, p_2, \xi_i) = \log( p_i) + \theta^*_{i1}p_1 + \theta^*_{i2}p_2 + \theta^*_{i3} \xi_i + \theta^*_{i4}, \quad i = 1, 2
\]
with 
$
\btheta^*_1 = [ -1.2, .5, 1, -9]^T$  and 
$
\btheta^*_2 = [.3, -1, 1, -9]^T.
$
We assume (for now) that although we know the parametric form of these demand functions, we do not know the precise values of $\btheta_1, \btheta_2$ and seek to estimate them.  The corresponding marginal revenue functions are
\begin{equation}
\label{eq:MargRevTrue}
M^*_i(p_1, p_2, \xi_i; \btheta^*_i) = \log(p_i) + \theta^*_{i1}p_1 + \theta^*_{i2}p_2 + \theta^*_{i3} \xi_i + \theta^*_{i4} + 1 + \theta^*_{ii}p_i, \quad i =1, 2.
\end{equation}
Here $\xi_1, \xi_2$ are random variables representing firm-specific knowledge which change over time (``demand shocks") causing prices to shift.  

Our idealized assumption is that $\xi_1=\xi_2 \equiv \xi$, and $\xi$ is common knowledge to both the firms and to the researcher (full-information).  In our simulations, we take $\xi$ to be i.i.d normals with mean $5$ and standard deviation $1.5$.  Using these parameters with $\overline{p} = .45$, we simulate values of $\xi$ and solve for the equilibrium prices $p_1^j, p_2^j$ for $j = 1, \ldots, 250$.  The values $(\xi^j, p_1^j, p_2^j)$ constitute our data set.  

To estimate $\btheta_1, \btheta_2$, we substitute the functional form Eq.~\eqref{eq:MargRevTrue} into Problem~\eqref{eq:DemandDeriv}, adding additional constraints that 1) the marginal revenue of firm $i$ is positive for the minimal price $p_i^j$ observed in the data, 2) the marginal revenue of firm $i$ is decreasing in firm $i$'s price, and 3) a normalization constraint.  (See Appendix~\ref{app:DemandForm} for an explicit formulation).

Unsurprisingly, solving this optimization recovers the true marginal revenue functions exactly.  We say ``unsurprisingly" because with full-information a correctly specified, known parametric form, we believe any reasonable estimation procedure should recover the true marginal revenue functions.  We point out that the optimal solution to the optimization problem is \emph{unique}, and the optimal value of the residuals is $\bepsilon = 0$

We plot the true marginal revenue functions for each firm (which is the same as our fitted function) in Figure~\ref{fig:IdealFit} (dashed black line).  To graph these functions we fixed $\xi$ to be its median value over the dataset, and fixed the other firm's price to be the price observed for this median value.  For convenience in what follows, we term this type of fixing of the other variables, \emph{fixing to the median observation.}  
\begin{figure}
\centering
\begin{minipage}{0.45\textwidth}
		\includegraphics[width=\textwidth]{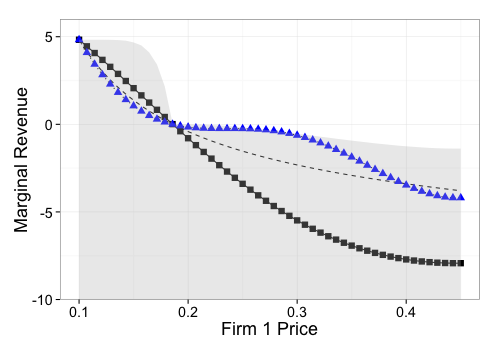}
\end{minipage}
\begin{minipage}{0.45\textwidth}
		\includegraphics[width=\textwidth]{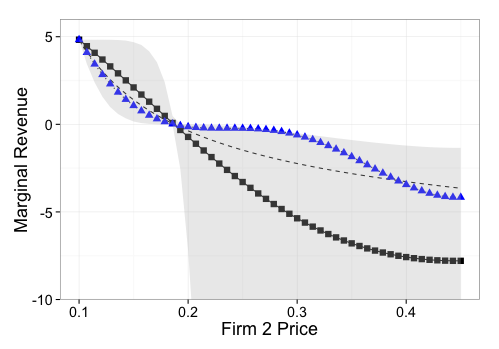}
\end{minipage}
\caption{\label{fig:IdealFit} An idealized scenario.  The true marginal revenue function (dashed black line), our nonparametric fit (black line, square markers), and the ambiguity set (grey region) for both firms.  Every function in the ambiguity set \emph{exactly} reconciles all the data.  A sample member (blue line, triangle markers) shown for comparison.  All variables other than the firm's own price have been fixed to the median observation.}
\end{figure}

Next consider the more realistic setting where we do not know the true parametric form \eqref{eq:MargRevTrue}, and so use our nonparametric method (cf. Problem~\ref{eq:NonParametricConstrained} with dualized objective \eqref{eq:DualizedObjective}).  We use a Gaussian kernel and tune the parameter $c$ and regularization constant $\lambda$ by 10-fold cross-validation.  The resulting fitted function is shown in Figure~\ref{fig:IdealFit} as a black line with square markers.  Notice, in particular, this function does not coincide with the true function.  However, this function also \emph{exactly} reconciles the data, i.e. the optimal value of the residuals is $\bepsilon = 0$.  This may seem surprising; the issue is that although there is only one function within the parametric family \eqref{eq:MargRevTrue} which reconciles the data, there are many potential smooth, nonparametric functions which also exactly reconcile this data.  Using our ambiguity set technique, we compute the upper and lower envelopes of this set of functions, and display the corresponding region as the grey ribbon in Figure~\ref{fig:IdealFit}.  We also plot a sample function from this set (blue line with triangle markers).  

This multiplicity phenomenon is not unusual; many inverse problems share it.  Moreover, it often persists even for very large samples $N$.  In this particular case, the crux of the issue is that, intuitively, the equilibrium conditions only give local information about the revenue function about its minimum.  (Notice all three marginal revenue functions cross zero at the same price).  The conditions themselves give no information about the global behavior of the function, even as $N\rightarrow \infty$.  

We see our ambiguity set technique and nonparametric analysis as important tools to protect against potentially faulty inference in these settings.  Indeed, parametric estimation might have incorrectly led us to believe that the unique marginal revenue function which recovered the data was the dashed line in Figure~\ref{fig:IdealFit} -- its residual error is zero and it is well-identified within the class.  We might then have been tempted to make claims about the slope of the marginal revenue function at the optima, or use it to impute a particular functional form for the demand function.  In reality, however, \textbf{\emph{any function from the ambiguity set might have just as easily generated this data}}, e.g., the blue line with triangle markers.  Those previous claims about the slope or demand function, then, need not hold.  The data does not support them.
Calculating nonparametric sets of plausible alternatives helps guard against these types of unwarranted claims.  

Finally, in the absence of any other information, we argue that our proposed nonparametric 
fit (red line with circles) is a reasonable candidate function in this space of alternatives.  By construction it will be smooth and well-behaved.  More generally, of all those functions which reconcile the data, it has the smallest $\H$-norm, and thus, by our generalization results in Section~\ref{sec:Generalization}, likely has the strongest generalization properties.  

\subsection{Bertrand-Nash Equilibrium (Unobserved Effects)}
\label{sec:DemandEstimation2}
We now proceed to a more realistic example.  Specifically, we no longer assume that $\xi_1 = \xi_2$, but rather these values represent (potentially different), firm-specific knowledge that is unobservable to us (the researchers).  We assume instead that we only have access to the noisy proxy $\xi$.  In our simulations, we take $\xi_1, \xi_2, \xi^\prime$ to be i.i.d normal with mean $5$ and standard deviation $1.5$, and let $\xi = (\xi_1 + \xi_2 + \xi^\prime)/3$.  Moreover, we assume that we we have incorrectly specified that the marginal revenue functions are of the form 
\begin{equation}\label{eq:ParametricGuess}
M_i(p_1, p_2, \xi; \btheta_i) = \sum_{k=1}^{9} \theta_{i1k} e^{ -k p_1} +  \sum_{k=1}^9 \theta_{i2k} e^{ -k  p_2} + 
							\theta_{i1} p_1 + \theta_{i2} p_2 + \theta_{i3} \xi_3 + \theta_{i4}
\end{equation}
for some values of $\btheta_1, \btheta_2$. Notice that the true parametric is not contained in this class.  This setup thus includes correlated noise, and endogenous effect, and parametric mispecification.  These features are known to cause statistical challenges in simple estimation procedures.  We simulate $N = 40$ observations ($\xi^j, p_1^j, p_2^j)$ from this model.  

We again fit this model first by solving a modification of Problem~\eqref{eq:DemandDeriv} as before.  (See Appendix~\ref{app:DemandForm} for an explicit formulation).  We only use half the day (20 observations) for reasons that will become clear momentarily.  We use the $\ell_\infty$-norm for the residuals $\bepsilon$ and an $\ell_1$-regularization of $\btheta_1, \btheta_2$ in the objective as discussed in Remark~\ref{rem:L1Reg}.  We tune the value of $\lambda$ in the regularization to minimize the mean squared error in price prediction obtaining the value $\lambda = .01$.  

Unfortunately, because we used cross-validation to choose $\lambda$, Theorem~\ref{thm:Campi} does not directly apply.  Consequently, we now refit $\btheta_1, \btheta_2$ with $\lambda=.1$ using the other half of our training set.  
The fitted marginal revenue functions for $\lambda=.01$ can be seen in Figure ~\ref{fig:FittedDemands} (red line, circular markers).    
\begin{figure}
\begin{center}
\begin{minipage}{0.45\textwidth}
		\includegraphics[width=\textwidth]{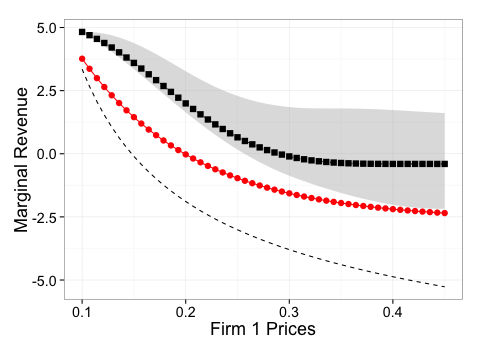}
\end{minipage}
\begin{minipage}{0.45\textwidth}
		\includegraphics[width=\textwidth]{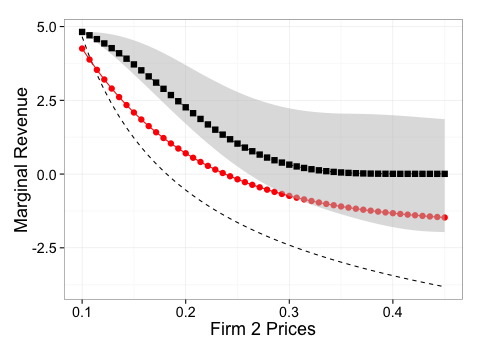}
\end{minipage}
\end{center}
\caption{\label{fig:FittedDemands} The true marginal revenue function (dashed line), fitted parametric marginal revenue function (solid red line, circular markers), fitted non-parametric marginal revenue function (solid black line, square  markers)and ambiguity sets (grey region) for each firm.
We fix all variables except the firm's own price to the median observation. }
\end{figure}
Notice that the fitted function does not exactly recover the original function, but does recover its approximate shape. 

To assess the out of sample performance of this model, we generate a new set of $N_{out}=200$ points.  For each point we compute the approximation error (minimal $\epsilon$ to make this point an $\epsilon$-approximate equilbria), and the prediction error had we attempted to predict this point by the solution to our $\VI$ with our fitted function.  Histograms of both quantities are in Fig.~\ref{fig:DemandHists}.
\begin{figure}
\begin{center}
\begin{minipage}{0.45\textwidth}
		\includegraphics[width=\textwidth]{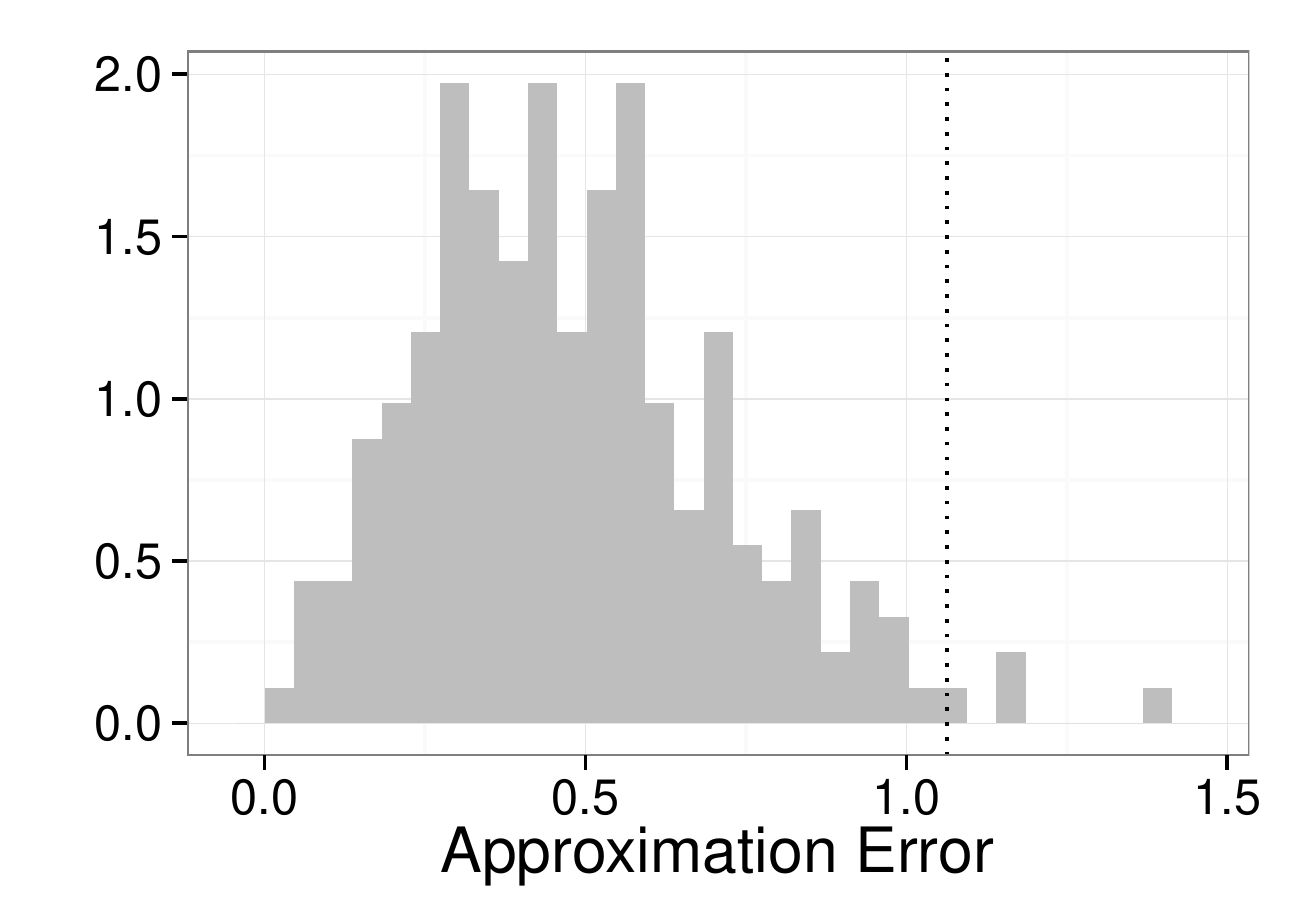}
\end{minipage}
\begin{minipage}{0.45\textwidth}
		\includegraphics[width=\textwidth]{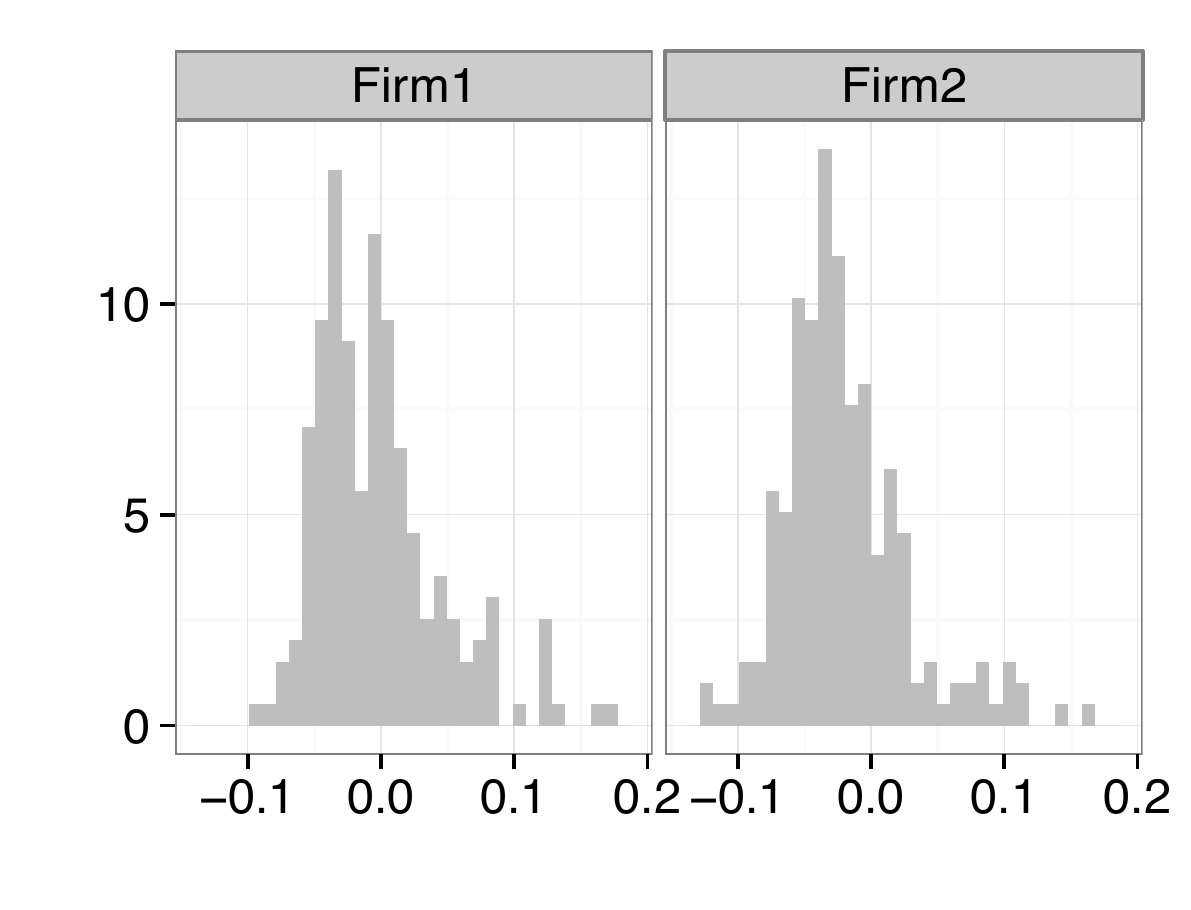}
\end{minipage}
\end{center}
\caption{ \label{fig:DemandHists}
Bertrand-Nash example of Section~\ref{sec:DemandEstimation2}.  The left panel shows the out-of-sample approximation error.  The right panel shows the out-of-sample prediction error.}
\end{figure}   

The maximal residual on the second half of the training set was $z_N \approx 1.06$, indicated by the dotted line in the left panel.  By Theorem~\ref{thm:Campi}, we would expect that with at least 90\% probability with respect to the data sampling, a new point would not be an $1.06$-equilibrium with probability at most $.21$.  Our out-of-sample estimate of this probability is $.025$.  In other words, our estimator has much stronger generalization than predicted by our theorem.  
At the same time, our estimator yields reasonably good predictions.  The mean out-of-sample prediction error is $(-.002, 0.02)$ with standard deviation $(.048, .047)$.

Finally, we fit our a nonparametric estimator to this data, using a Gaussian kernel.  We again tune the parameter $c$ and regularization constant $\lambda$ by cross-validation.  The resulting fit is shown in Figure~\ref{fig:FittedDemands} (black line, square markers), along with the corresponding ambiguity set.  We chose the value of $\kappa$ to be twice the standard deviation of the $\ell_1$-norm of the residuals, estimated by cross-validation as discussed in Remark~\ref{rem:Ambiguity} and the end of Sec.~\ref{sec:Generalization}.   The out-of-sample approximation error is similar to the parametric case.  Unfortunately, the fitted function is not monotone, and, consequently, there exist multiple Nash equilibria.  It is thus hard to compare prediction error on the out-of-sample set; which equilibria should we use to predict?  This non-monotonicity is a potential weakness of the nonparametric approach in this example.  



\subsection{Wardrop Equilibrium}
\label{sec:TrafficEstimation}
Our experiments will use the Sioux Falls network \cite{leblanc1975efficient}, a standard benchmark throughout the transportation literature.  It is modestly sized with 24 nodes and 76 arcs, and all pairs of nodes represent origin-destination pairs.  

We assume that the true function $g( \cdot )$ is given by the U.S. Bureau of Public Roads (BPR) function,
$g(t) = 1 + .15t^4$ which is by far the most commonly used for traffic modeling (\cite{branston1976link}, \cite{bureau1964traffic}).  Baseline demand levels, arc capacities, and free-flow travel times were taken from the repository of traffic problems at \cite{TransportationTestProblems}.  We consider the network structure including arc capacities and free-flow travel times as fixed.  We generate data on this network by first randomly perturbing the demand levels a relative amount drawn uniformly from $[0, 10\%]$.  We then use the BPR function to solve for the equilibrium flows on each arc, $x^*_a$.  Finally, we perturb these true flows by a relative amount, again drawn uniformly from $[0, 10\%]$.  We repeat this process $N=40$ times.  Notice that because both errors are computed as relative perturbations, they both are correlated to the observed values.  We use the perturbed demands and flows as our data set.  

We then fit the function $g$ nonparametrically using \eqref{eq:TrafficQP}, again only using half of the data set.  The use of low order polynomials in traffic modeling is preferred in the literature for a number of computational reasons.  Consequently, we choose $k$ to be a polynomial kernel with degree at most $6$, and tune the choice of $c$ by 5-fold cross-validation, minimizing the approximation error.  The fitted functions for various choices of $d$ are shown in 
 left panel of Figure~\ref{fig:TrafficFit}, alongside the true function.  Notice that the fit is quite stable to choice of class of function, and matches the true function very closely.  In what remains, we focus on our fit of polynomial of degree $3$.  We note, this class does not contain the true BPR function (which has degree 4).  We refit the degree 3 polynomial with the second-half of our training set (not shown).  
\begin{figure}
\centering
\begin{minipage}{0.45\textwidth}
	\includegraphics[width = \textwidth]{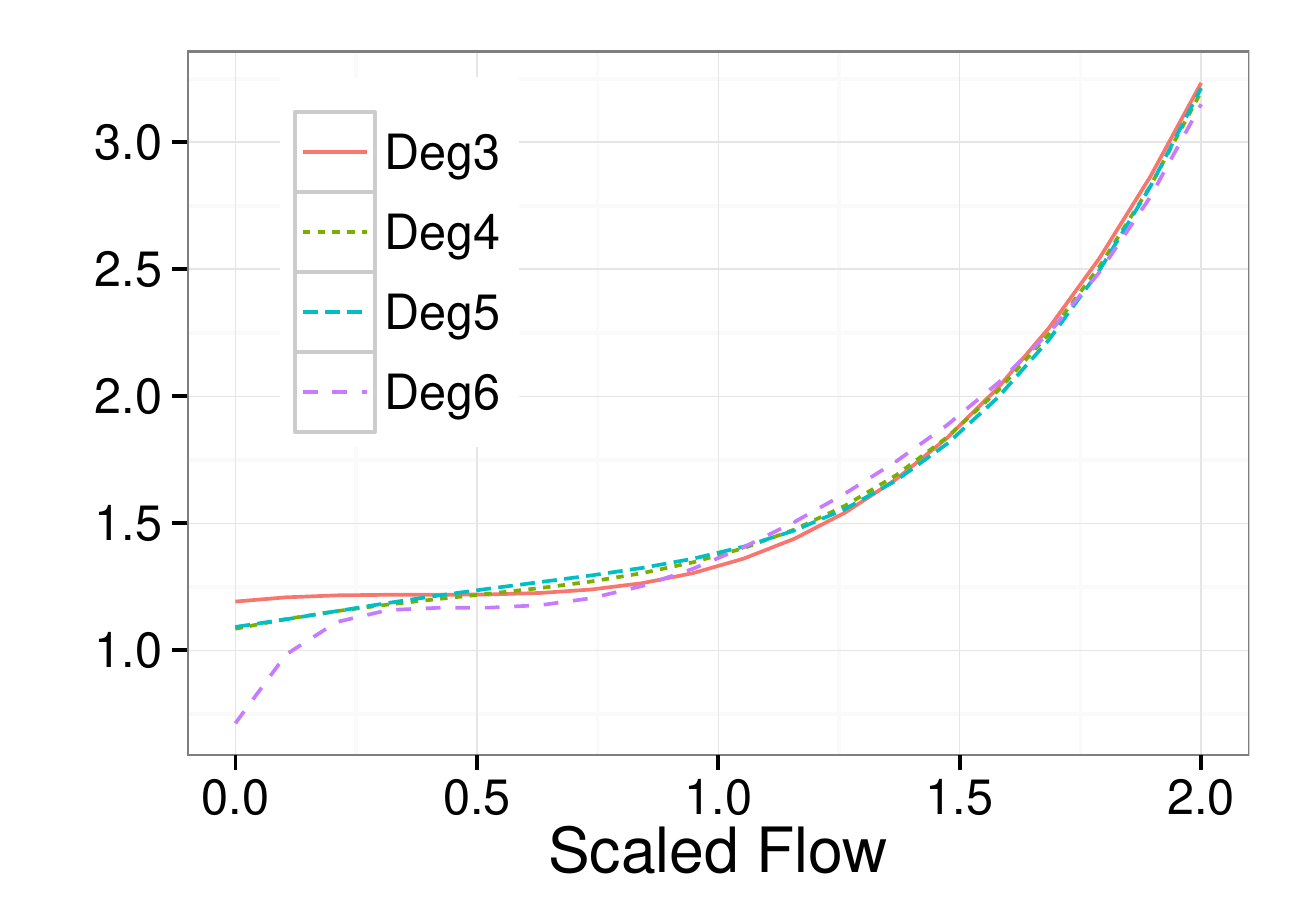}
\end{minipage}
\begin{minipage}{0.45\textwidth}
	\includegraphics[width = \textwidth]{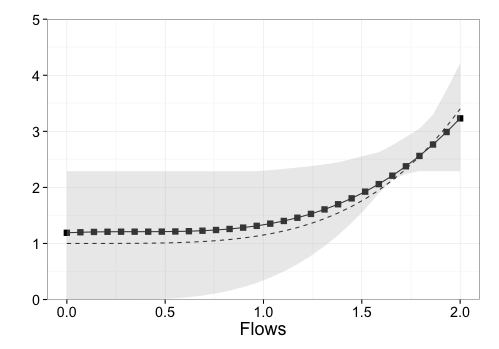}
\end{minipage}
\caption{\label{fig:TrafficFit}  The left panel shows the true BPR function and fits based on polynomials of degrees 3, 4, 5, and 6.  The right panel shows the true BPR function (dashed line), our degree 3 fit (solid black line with markers), and an ambiguity set around this function (grey region).  
}
\end{figure}

To assess the quality of our degree $3$ fit, we create a new out-of-sample test-set of size $N_{out}=500$. 
On each sample we compute the approximation error of our fit and the $\ell_2$-norm of the prediction error when predicting new flows by solving the fitted $\VI$. These numbers are large and somewhat hard to interpret.  Consequently we also compute normalized quantities, normalizing the first by the minimal cost of travel on that network with respect to the fitted function and demands, and the second by the $\ell_2$ norm of the observed flows.   Histograms for the normalized quantities are shown in Figure~\ref{fig:TrafficHists}.  The mean (relative) approximation error is 6.5\%, while the mean predictive (relative error) is about 5.5\%.  

The in-sample approximation error on the second-half of the training sample was $z_N \approx 8.14 \times 10^5$.  By Theorem~\ref{thm:Rademacher}, we can compute that with probability at least 90\% with respect to the data sampling, a new data point will be at most a $9.73 \times 10^5$ approximate equilibrium with respect to the fitted function with probability at least 90\%.  A cross-validation estimate of the same quantity is $6.24 \times 10^5$.  Our out of sample estimate of this quantity from the above histogram is $7.78 \times 10^5$.  In other words, the performance of our estimator is again better than predicted by the theorem.  Cross-validation provides a slightly better, albeit biased, bound.     

Finally, as in the previous section, we consider constructing an ambiguity set around the fitted function, selecting $\kappa$ to be two standard deviations as computed by cross-validation.  The resulting envelopes are also shown in the right panel of Figure~\ref{fig:TrafficFit}.  Notice that in contrast to the envelopes of the previous section, they are quite small, meaning we can have relatively high confidence in the shape of the fitted function.  
\begin{figure}
\begin{center}
\begin{minipage}{0.4\textwidth}
	\includegraphics[width = \textwidth]{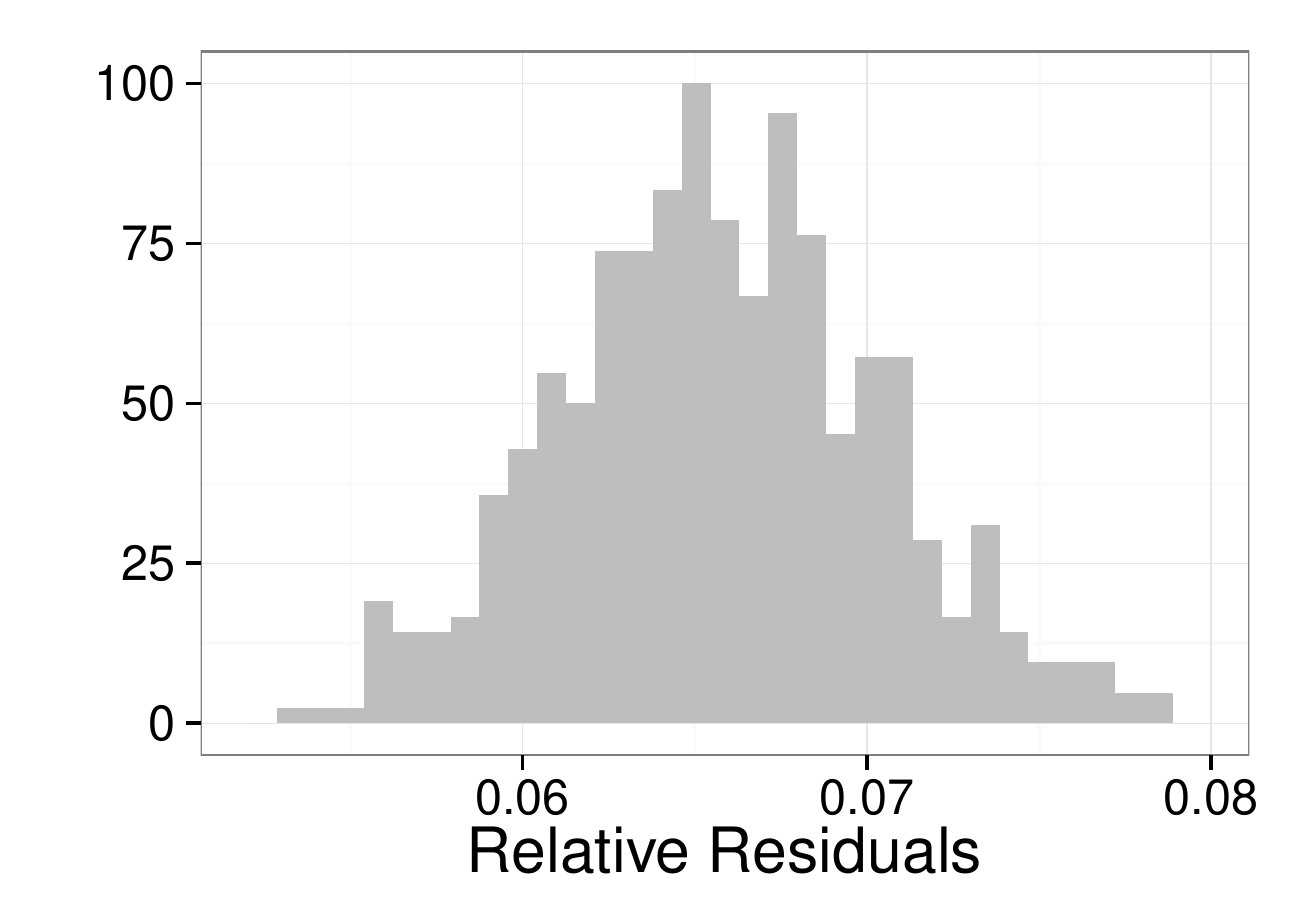}
\end{minipage}
\begin{minipage}{0.4\textwidth}
	\includegraphics[width = \textwidth]{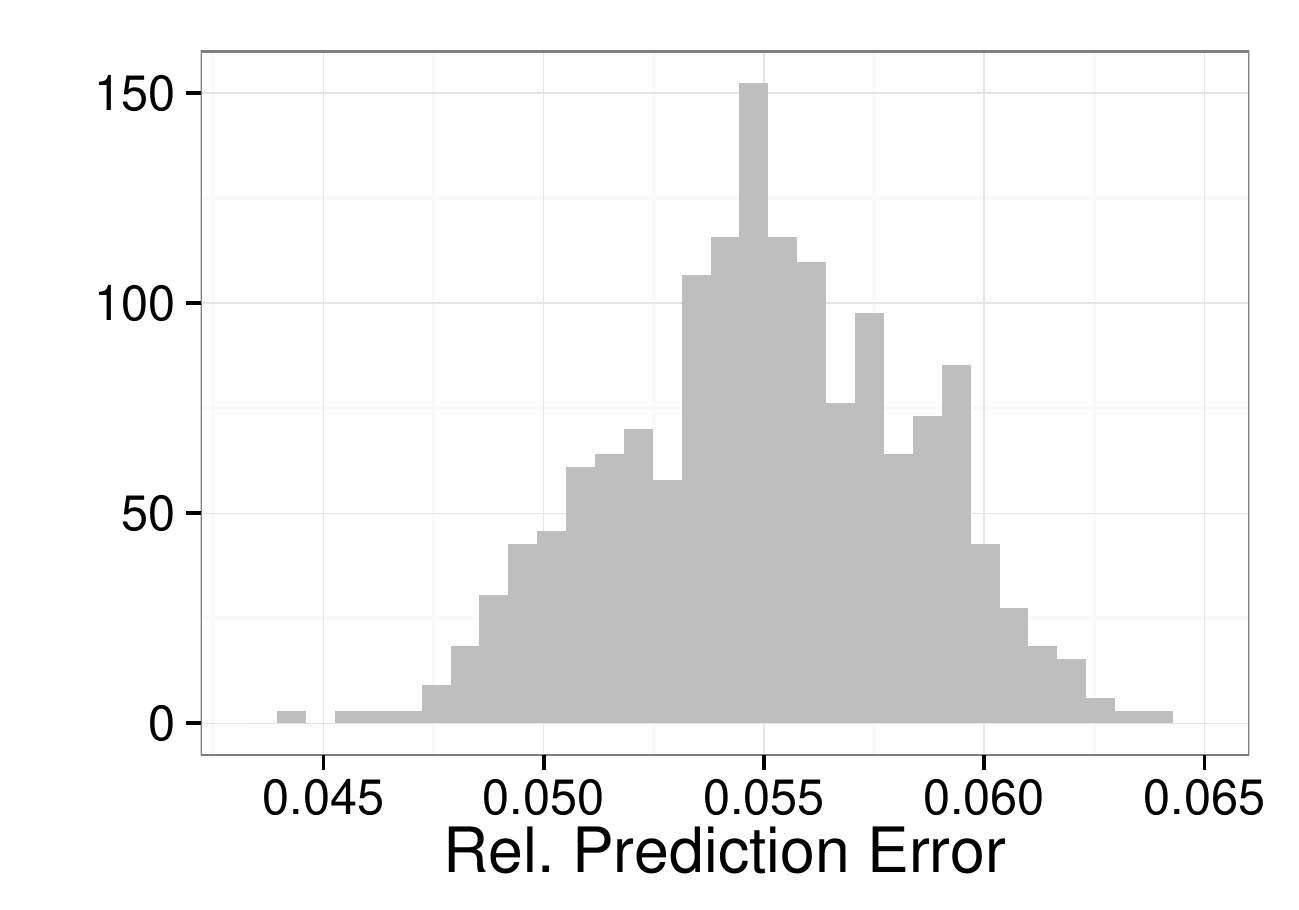}
\end{minipage}
\end{center}
\caption{ \label{fig:TrafficHists}
The left panel shows the histogram of of out-sample approximation errors induced by our nonparametric fit from Section~\ref{sec:TrafficEstimation}.  The right panel shows the norm of the difference of this flow from the observed flow, relative to the norm of the observed flow.}  
\end{figure}   

\section{Conclusion}
\label{Sec:Conclusion}

In this paper, we propose a computationally tractable technique for estimation in equilibrium based on an inverse variational inequality formulation.  Our approach is generally applicable and focuses on fitting models with good generalization guarantees and predictive power.  We prove our estimators enjoy both properties and illustrate their usage in two applications -- demand estimation under Nash equilibrium and congestion function estimation under user equilibrium .  Our results suggest this technique can successfully model systems presumed to be in equilibrium and make meaningful predictive claims about them.  

%
%
}

\begin{acknowledgements}
Research partially supported by the NSF under grants
CNS-1239021 and IIS-1237022, by the ARO under grants
W911NF-11-1-0227 and W911NF-12-1-0390, by the ONR under grant
N00014-10-1-0952, by the NIH/NIGMS under grant GM093147, and by Citigroup under a grant to the Sloan School of Management. 

We would also like to thank the two anonymous reviewers, the Associate editor and the Area editor for their helpful comments on a first draft of this paper.  
\end{acknowledgements}

\bibliographystyle{spmpsci}      
\bibliography{InverseVIs_ArXiv2.bib}   

\appendix

\section{Omitted Proofs} 
\subsection{Proof of Theorem~\ref{Prop:Finite} }
\begin{proof} 
Let $\mathbf{f}^* = (f^*_1, \ldots, f^*_n)$ be any solution.  We will construct a new solution with potentially lower cost with the required representation.  We do this iteratively beginning with $f^*_1$.  

Consider the subspace $\mathcal{T} \subset \H_1$ defined by  $\mathcal{T} = \text{span}(k_{\bx_1}, \ldots, k_{\bx_N})$, and let $\mathcal{T}^\bot$ be its orthogonal complement.
It follows that $f^*_1$ decomposes uniquely into $f_1^*  = f_0 + f_0^\bot$ with $f_0 \in\mathcal{T}$ and $f_0^\bot \in \mathcal{T}^\bot$.  Consequently,
\begin{align*}
f^*_1(\bx_j) &= \langle k_{\bx_j}, f^*_1 \rangle, &\text{(by \eqref{eq:RepProp})}
\\
& =\langle k_{\bx_j} , f_0 \rangle + \langle k_{\bx_j}, f_0^\bot \rangle
\\
& =\langle k_{\bx_j} , f_0 \rangle &\text{(since $f_0^\bot \in \mathcal{T}^\bot$)}
\\
& = f_0(\bx_j) &\text{(by \eqref{eq:RepProp})}.
\end{align*}  
Thus, the solution $\mathbf{f} = (f_0, f^*_2, \ldots, f^*_n)$ is feasible to \eqref{eq:NonParametricConstrained}.  Furthermore, by orthogonality
$
\| f^*_1 \|_{\H_1} = \| f_0 \|_{\H_1} + \| f_0^\bot \|_{\H_1} \geq \| f_0 \|_{\H_1}.
$
Since the objective is non-decreasing in $\| f_1 \|_\H$, $\mathbf{f}$ has an objective value which is no worse than $\mathbf{f}^*$.  We can now proceed iteratively, considering each coordinate in turn.  After at most $n$ steps, we have constructed a solution with the required representation.
\qed
\end{proof}

\subsection{Proof of Theorem~\ref{thm:QP}}
{
\blockblue
\begin{proof}
Suppose Problem~\eqref{eq:QP} is feasible and let $\balpha$ be a feasible solution.  Define $\mathbf{f}$ via eq.~\eqref{eq:KernelExpansion}.  It is straightforward to check that $\mathbf{f}$ is feasible in Problem~\eqref{eq:NonParametricConstrained} with the same objective value.

On the other hand, let $\mathbf{f}$ be some feasible solution to Problem~\eqref{eq:NonParametricConstrained}.  By Theorem~\ref{Prop:Finite}, there exists $\balpha$ such that
$
f_i( \bx_j) = \be_i^T \balpha \bK \be_j,
$ and 
$\| f_i \|^2_\H = \be_i^T\balpha \bK \balpha^T \be_i.
$
It straightforward to check that such $\balpha$ is feasible in Problem~\eqref{eq:QP} and that they yield the same objective value.  Thus, Problem~\eqref{eq:NonParametricConstrained} is feasible if and only if Problem~\eqref{eq:QP} is feasible, and we can construct an optimal solution to Problem~\eqref{eq:NonParametricConstrained}  from an optimal solution to Problem~\eqref{eq:QP} via \eqref{eq:KernelExpansion}.
\qed
\end{proof}
}
{\blockblue
\subsection{Proof of Theorem~\ref{thm:Campi}}
\begin{proof}
As mentioned in the text, the key idea in the proof is to relate \eqref{eq:ParametricInvVI} with a randomized uncertain convex program.  To this end, notice that if $z_N, \btheta_N$ are an optimal solution to \eqref{eq:ParametricInvVI} with the $\ell_\infty$-norm, then
$(z_N, \btheta_N) \in \bigcap_{j=1}^N \mathcal{X}( \bx_j, \bA_j, \bb_j, C_j)$ where
\[
\mathcal{X}( \bx, \bA, \bb, C) = \left\{ z, \btheta \in \Theta : \exists \by\in \R^m \text{ s.t. } \bA^T \by \leq \mathbf{f}(\bx, \btheta), \ \ 
\bx^T\mathbf{f}(\bx, \btheta) - \bb^T\by \leq z \right\}.
\]
The sets $\mathcal{X}( \bx_j, \bA_j, \bb_j, C_j)$ are convex.  Consider then the problem 
\[
\min_{z \geq 0, \btheta} \quad z
\ \
\text{s.t.} \quad (z, \btheta) \in \bigcap_{j=1}^N \mathcal{X}( \bx_j, \bA_j, \bb_j, C_j) .
\]
This is exactly of the form Eq. 2.1 in \cite{campi2008exact}.  Applying Theorem 2.4 of that work shows that with probability $\beta(\alpha)$ with respect to the sampling, the ``violation probability" of the pair $(z_N, \theta_N)$ is a most $\alpha$.  In our context, the probability of violation is exactly the probability that $(\tilde{\bx}, \tilde{\bA}, \tilde{\bb}, \tilde{C})$ is not a $z_N$ approximate equilibria.  This proves the theorem.
%
\end{proof}
Observe that the original proof in \cite{campi2008exact} requires that the solution $\btheta_N$ be unique almost surely.  However, as mentioned on pg. 7 discussion point 5 of that text, it suffices to pick a tie-breaking rule for the $\btheta_N$ in the case of multiple solutions.  The tie-breaking rule discussed in the main text is one possible example.  

\subsection{Proof of Theorem~\ref{thm:Rademacher}} 
We require auxiliary results.  Our treatment closely follows \cite{bartlett2003rademacher}.
Let $\zeta_1, \ldots, \zeta_N$ be i.i.d.  For any class of functions $\mathcal{S}$, define the empirical Rademacher complexity $\mathcal{R}_N(\mathcal{S})$ 
by 
\begin{align*}
\mathcal{R}_N(\mathcal{S}) = 
\E \left[ \sup_{f \in \mathcal{S}} \frac{2}{N}  \left| \left. \sum_{i=1}^N \sigma_i f(\zeta_i) \right|  \right | \zeta_1, \ldots, \zeta_N \right] , 
\end{align*}
where $\sigma_i$ are independent uniform $\{ \pm 1 \}$-valued random variables.
Notice this quantity is random, because it depends on the data $\zeta_1, \ldots, \zeta_N$.  

Our interest in Rademacher complexity stems from the following lemma.  
\begin{lemma} \label{lemma:fundamental}
Let $\mathcal{S}$ be a class of functions whose range is contained in $[0, M]$.  Then, for any $N$, and any $0 < \beta < 1$, with probability at least $1-\beta$ with respect to $\P$, every $f \in \mathcal{F}$ simultaneously satisfies
\begin{equation}
\E[ f( \zeta ) ] \leq \frac{1}{N} \sum_{i=1}^N f(\zeta_i) + \mathcal{R}_N(\mathcal{S}) + \sqrt{ \frac{8 M \log (2/\beta) }{N}} 
\end{equation}
\end{lemma}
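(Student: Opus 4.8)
This is a classical uniform deviation bound, and the plan is to run the standard Rademacher symmetrization argument (following \cite{bartlett2003rademacher}). The object to control is the one--sided uniform deviation
\[
\Phi(\zeta_1,\dots,\zeta_N) \;=\; \sup_{f\in\mathcal{S}}\Bigl(\E[f(\zeta)] - \frac1N\sum_{i=1}^N f(\zeta_i)\Bigr),
\]
since the lemma is exactly a high--probability, uniform-in-$\mathcal{S}$ upper bound on $\Phi$. First I would observe that, because every $f\in\mathcal{S}$ has range contained in $[0,M]$, replacing a single sample $\zeta_i$ perturbs $\frac1N\sum_k f(\zeta_k)$ by at most $M/N$ and hence perturbs $\Phi$ by at most $M/N$; McDiarmid's bounded--differences inequality then yields, with probability at least $1-\beta/2$,
\[
\Phi \;\le\; \E[\Phi] + M\sqrt{\frac{\log(2/\beta)}{2N}} .
\]

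Next I would bound $\E[\Phi]$ by the Rademacher complexity via a ghost--sample symmetrization. Introducing an independent copy $\zeta_1',\dots,\zeta_N'$ of the data and writing $\E[f(\zeta)] = \E_{\zeta'}[\frac1N\sum_i f(\zeta_i')]$, Jensen's inequality gives $\E[\Phi]\le \E[\sup_{f\in\mathcal{S}}\frac1N\sum_i (f(\zeta_i')-f(\zeta_i))]$. Since each difference $f(\zeta_i')-f(\zeta_i)$ is symmetric, I can multiply the $i$-th term by an independent Rademacher sign $\sigma_i$ without changing the distribution, then split the sums and use the triangle inequality (together with $\sup\le\sup|\cdot|$) to get $\E[\Phi]\le 2\,\E[\sup_{f\in\mathcal{S}}\frac1N|\sum_i\sigma_i f(\zeta_i)|] = \E[\mathcal{R}_N(\mathcal{S})]$, where the factor $2$ already present in the definition of $\mathcal{R}_N$ absorbs the two copies of the sum. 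Because the lemma is stated in terms of the \emph{data--dependent} quantity $\mathcal{R}_N(\mathcal{S})$ rather than its expectation, I would add one further concentration step: regarding $\mathcal{R}_N(\mathcal{S})$ as a function of $\zeta_1,\dots,\zeta_N$, a single sample changes it by at most $2M/N$, so a second application of McDiarmid gives $\E[\mathcal{R}_N(\mathcal{S})]\le \mathcal{R}_N(\mathcal{S}) + 2M\sqrt{\log(2/\beta)/(2N)}$ with probability at least $1-\beta/2$.

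Finally, on the intersection of the two McDiarmid events --- which holds with probability at least $1-\beta$ by a union bound --- I would chain the three displayed inequalities to obtain, simultaneously for all $f\in\mathcal{S}$,
\[
\E[f(\zeta)] \;\le\; \frac1N\sum_{i=1}^N f(\zeta_i) + \mathcal{R}_N(\mathcal{S}) + 3M\sqrt{\frac{\log(2/\beta)}{2N}},
\]
and then crudely bound the trailing constant (using the range bound on $\mathcal{S}$) to recover the form stated in the lemma. I do not expect any genuinely hard point here; the step demanding the most care is the symmetrization --- inserting the Rademacher signs correctly using symmetry of $f(\zeta_i')-f(\zeta_i)$ and tracking the factor $2$ that relates the two--sample expression to the definition of $\mathcal{R}_N(\mathcal{S})$. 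The two McDiarmid applications are routine once the bounded--difference constants $M/N$ and $2M/N$ are identified, the only mild subtlety being that the empirical (rather than expected) Rademacher complexity in the statement is what necessitates the second concentration inequality.
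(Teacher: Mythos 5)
Your proposal is correct, but it takes a genuinely different route from the paper. The paper proves this lemma in one line by specializing Theorem 8 of \cite{bartlett2003rademacher}: it sets the loss to $\phi(y,a)=a/M$ so that the rescaled class $M^{-1}\mathcal{S}$ has range $[0,1]$, applies that theorem directly, multiplies through by $M$, and uses the scaling identity $M\,\mathcal{R}_N(M^{-1}\mathcal{S})=\mathcal{R}_N(\mathcal{S})$. You instead reprove the underlying result from first principles --- McDiarmid on the uniform deviation $\Phi$, ghost-sample symmetrization to bound $\E[\Phi]$ by $\E[\mathcal{R}_N(\mathcal{S})]$, and a second McDiarmid application to pass from the expected to the empirical Rademacher complexity --- and all three steps, including the bounded-difference constants $M/N$ and $2M/N$, are right. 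What your self-contained derivation buys is transparency about the constant, and it in fact exposes a typo in the lemma as stated: the correct trailing term scales as $M\sqrt{\log(2/\beta)/N}$ (your $3M\sqrt{\log(2/\beta)/(2N)}$, or $M\sqrt{8\log(2/\beta)/N}$ from Bartlett--Mendelson after rescaling), whereas the paper writes $\sqrt{8M\log(2/\beta)/N}$ with $M$ unsquared inside the radical. Your final remark that you can ``crudely bound the trailing constant to recover the form stated'' is therefore the one weak point: $3M\sqrt{\log(2/\beta)/(2N)}\le\sqrt{8M\log(2/\beta)/N}$ only when $M\le 16/9$, and for large $M$ the stated form is not recoverable because it is not the correct scaling (a singleton class with a $\{0,M\}$-valued function already violates it for large $M$ and small $\beta$). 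Since the paper explicitly disclaims concern with constant-order factors and the downstream use (Theorem~\ref{thm:Rademacher}) only needs the $O(M/\sqrt{N})$ form, this does not undermine your argument --- but you should state your bound as the conclusion rather than claim it matches the printed one.
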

\begin{proof}
The result follows by specializing Theorem 8 of \cite{bartlett2003rademacher}.  Namely, using the notation of that work, let $\phi(y, a) = \mathcal{L}(y, a) = a / M$, $\delta = \beta$ and then apply the theorem.  Multiply the resulting inequality by $M$ and use Theorem 12, part 3 of the same work to conclude that $M \mathcal{R}_N( M^{-1} \mathcal{S} ) = \mathcal{R}_N(\mathcal{S})$ to finish the proof.  
\end{proof}
\begin{remark}
The constants in the above lemma are not tight.  Indeed, modifying the proof of Theorem 8 in \cite{bartlett2003rademacher} to exclude the centering of $\phi$ to $\tilde{\phi}$, one can reduce the constant $8$ in the above bound to $2$.  For simplicity in what follows, we will not be concerned with improvements at constant order.
\end{remark}
\begin{remark}
Lemma~\ref{lemma:fundamental} relates the empirical expectation of a function to its true expectation. If $f \in \mathcal{S}$ were fixed a priori, stronger statements can be proven more simply by invoking the weak law of large numbers. 
The importance of Lemma~\ref{lemma:fundamental} is that it asserts the inequality holds uniformly for all $f \in \mathcal{S}$.  This is important since in what follows, we will be identifying the relevant function $f$ by an optimization, and hence it will not be known to us a priori, but will instead depend on the data.  
\end{remark}

Our goal is to use Lemma~\ref{lemma:fundamental} to bound the 
$\E[\epsilon(\mathbf{f}_N, \tilde{\bx}, \tilde{\bA}, \tilde{\bb}, \tilde{C})]$.  To do so, we must compute an upper-bound on the Rademacher complexity of a suitable class of functions.  As a preliminary step,
\begin{lemma} For any $\mathbf{f}$ which is feasible in \eqref{eq:ParametricInvVI} or \eqref{eq:NonParametricRad}, we have
\begin{equation} \label{eq:UpperBoundEpsilon}
\tilde{\epsilon}(\mathbf{f}, \tilde{\bx}, \tilde{\bA}, \tilde{\bb}, \tilde{C}) \leq \overline{B} \quad \text{ a.s. }
\end{equation}
\end{lemma}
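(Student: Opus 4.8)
The plan is to first derive a closed form for $\epsilon(\mathbf{f}, \tilde{\bx}, \tilde{\bA}, \tilde{\bb}, \tilde{C})$ and then bound it geometrically. By the definition of an $\epsilon$-approximate solution in \eqref{eq:defApproxEquil}, $\tilde{\bx}$ is an $\epsilon$-approximate solution to $\VI(\mathbf{f}, \tilde{\bA}, \tilde{\bb}, \tilde{C})$ exactly when $\mathbf{f}(\tilde{\bx})^T(\bx - \tilde{\bx}) \geq -\epsilon$ for every $\bx \in \tilde{\mathcal{F}}$, so the smallest such $\epsilon$ is the optimal value of the primal gap function:
\[
\epsilon(\mathbf{f}, \tilde{\bx}, \tilde{\bA}, \tilde{\bb}, \tilde{C}) \;=\; \sup_{\bx \in \tilde{\mathcal{F}}} \ \mathbf{f}(\tilde{\bx})^T(\tilde{\bx} - \bx).
\]
I would note that this supremum is finite because $\tilde{\mathcal{F}}$ is bounded (Assumption~\assref{ass:boundedFeas}) and $\mathbf{f}(\tilde{\bx})$ is a fixed vector, and that it is nonnegative (so the ``smallest $\epsilon \geq 0$'' is indeed this value) because $\tilde{\bx} \in \tilde{\mathcal{F}}$ almost surely by Assumption~\assref{ass:Membership}, so $\bx = \tilde{\bx}$ is admissible and contributes $0$. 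All of this takes place on the probability-one event on which Assumptions~\assref{ass:randomSlater}--\assref{ass:boundedFeas} hold, which is the source of the ``a.s.'' in the statement.

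Next I would bound the inner product by Cauchy--Schwarz: for any $\bx \in \tilde{\mathcal{F}}$,
\[
\mathbf{f}(\tilde{\bx})^T(\tilde{\bx} - \bx) \;\leq\; \|\mathbf{f}(\tilde{\bx})\|_2 \, \|\tilde{\bx} - \bx\|_2 \;\leq\; 2R \, \|\mathbf{f}(\tilde{\bx})\|_2 ,
\]
using that $\tilde{\bx}$ and $\bx$ both lie in a ball of radius $R$ by Assumption~\assref{ass:boundedFeas}. It then remains only to bound $\|\mathbf{f}(\tilde{\bx})\|_2$, which is where the two regimes diverge. In the parametric case, $\mathbf{f}$ feasible for \eqref{eq:ParametricInvVI} means $\mathbf{f} = \mathbf{f}(\cdot; \btheta)$ for some $\btheta \in \Theta$, and since $\|\tilde{\bx}\|_2 \leq R$ we get $\|\mathbf{f}(\tilde{\bx})\|_2 \leq \sup_{\|\bx\|_2 \leq R,\, \btheta \in \Theta} \|\mathbf{f}(\bx; \btheta)\|_2$, which is exactly the quantity controlled in \eqref{eq:ParametricB}. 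In the nonparametric case, $\mathbf{f}$ feasible for \eqref{eq:NonParametricRad} has each component $f_i \in \H$ with a bound on $\|f_i\|_{\H}$ in terms of $\kappa_i$; I would invoke the reproducing property \eqref{eq:RepProp} to write $f_i(\tilde{\bx}) = \langle k_{\tilde{\bx}}, f_i \rangle_{\H}$, apply Cauchy--Schwarz in $\H$ to get $|f_i(\tilde{\bx})| \leq \|k_{\tilde{\bx}}\|_{\H}\|f_i\|_{\H} = \sqrt{k(\tilde{\bx}, \tilde{\bx})}\,\|f_i\|_{\H} \leq \overline{K}\|f_i\|_{\H}$ (using the definition of $\overline{K}$ and $\|\tilde{\bx}\|_2 \leq R$), and then square and sum over $i = 1, \ldots, n$ to obtain $\|\mathbf{f}(\tilde{\bx})\|_2 \leq \overline{K}\sqrt{\sum_{i=1}^n \kappa_i^2}$. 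Substituting either bound into $\epsilon \leq 2R\|\mathbf{f}(\tilde{\bx})\|_2$ yields $\epsilon(\mathbf{f}, \tilde{\bx}, \tilde{\bA}, \tilde{\bb}, \tilde{C}) \leq \overline{B}$ with the respective definition of $\overline{B}$.

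The argument has no genuinely hard step: once the gap-function identity is written down, everything is Cauchy--Schwarz together with the two boundedness assumptions and, in the nonparametric case, the reproducing property of $\H$. The part requiring the most care is purely bookkeeping: keeping the parametric and nonparametric definitions of $\overline{B}$ (and the roles of $R$, $\overline{K}$ and the $\kappa_i$) straight, and verifying that the per-coordinate $\H$-norm bounds assemble into exactly the stated expression for $\overline{B}$. A secondary point to be explicit about is the almost-sure qualifier, which simply propagates from the probability-one events on which Assumptions~\assref{ass:randomSlater}--\assref{ass:boundedFeas} hold.
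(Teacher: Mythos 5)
Your proposal is correct and follows essentially the same route as the paper's proof: express the minimal $\epsilon$ as the primal gap function $\sup_{\bx \in \tilde{\mathcal{F}}}\mathbf{f}(\tilde{\bx})^T(\tilde{\bx}-\bx)$, bound it by $2R\|\mathbf{f}(\tilde{\bx})\|_2$ via Cauchy--Schwarz and Assumption~\assref{ass:boundedFeas}, and then control $\|\mathbf{f}(\tilde{\bx})\|_2$ by the definition of $\overline{B}$ in the parametric case and by the reproducing property plus Cauchy--Schwarz in $\H$ in the nonparametric case. The only cosmetic difference is that the paper invokes strong duality from Theorem~\ref{thm:Aghassi} to justify the gap-function identity, whereas you read it directly off the definition of an $\epsilon$-approximate solution; both are fine.
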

\begin{proof}
Using strong duality as in Theorem~\ref{thm:Aghassi}, 
\begin{equation} \label{eq:DefEpsilon}
\tilde{\epsilon}(\mathbf{f}, \tilde{\bx}, \tilde{\bA}, \tilde{\bb}, \tilde{C}) = 
\max_{\bx \in \tilde{\F}} (\tilde{\bx} - \bx)^T \mathbf{f}( \tilde{\bx} )  
\leq 2R \sup_{\tilde{\bx} \in \tilde{\F}} \| \mathbf{f}(\tilde{\bx}) \|_2,
\end{equation}
by \assref{ass:boundedFeas}.  For Problem~\eqref{eq:ParametricInvVI}, the result follows from the definition of $\overline{B}$.  For Problem~\eqref{eq:NonParametricRad}, observe that
for any $\tilde{\bx} \in \tilde{\F}$, 
\begin{equation} \label{eq:boundonf}
| f_i (\tilde{\bx}) |^2 = \langle f_i, k_{\tilde{\bx}}  \rangle^2 \leq \| f_i \|_\H^2 \sup_{\| \bx \|_2 \leq R } k(\bx, \bx) = \| f_i \|_\H^2 \overline{K}^2
\leq \kappa_i^2 \overline{K}^2,
\end{equation}
where the middle inequality follows from Cauchy-Schwartz.  Plugging this into Eq.~\eqref{eq:DefEpsilon} and using the definition of $\overline{B}$ yields the result.  
\end{proof}
Now consider the class of functions
\[
F = \begin{cases} \Big\{ (\bx, \bA, \bb, C) \mapsto \epsilon(\mathbf{f}, \bx, \bA, \bb, C) : \mathbf{f} = \mathbf{f}(\cdot, \btheta), \btheta \in \Theta \Big\} & \text{ for Problem~\eqref{eq:ParametricInvVI}}
\\
\Big\{ (\bx, \bA, \bb, C) \mapsto \epsilon(\mathbf{f}, \bx, \bA, \bb, C) : f_i \in \H, \  \ \| f_i \|_\H \leq \kappa_i \ \  i=1, \ldots, N \Big\}
& \text{ for Problem~\eqref{eq:NonParametricRad}. }
\end{cases}
\]
\begin{lemma}
\[
\mathcal{R}_N(F) \leq \frac{2\overline{B}}{\sqrt{N}}
\]
\end{lemma}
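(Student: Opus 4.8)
The plan is to bound the Rademacher complexity of the function class $F$ by relating $\epsilon(\mathbf{f}, \bx, \bA, \bb, C)$ to a supremum of linear functionals of $\mathbf{f}$, and then to exploit the structural descriptions of the admissible $\mathbf{f}$ (parametric family, or RKHS ball). The starting point is the strong-duality identity from Theorem~\ref{thm:Aghassi}, which was used already in the previous lemma: for a data point $(\bx, \bA, \bb, C)$ with feasible set $\F$, one has $\epsilon(\mathbf{f}, \bx, \bA, \bb, C) = \max_{\bx' \in \F} (\bx - \bx')^T \mathbf{f}(\bx)$. The key observation is that, although this is a nonlinear function of $\mathbf{f}$ through the evaluation $\mathbf{f}(\bx)$, the $\max$ over $\bx' \in \F$ is just a pointwise supremum of \emph{affine} functions of the vector $\mathbf{f}(\bx) \in \R^n$, and hence the composition inherits good contraction properties.

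First I would write $\epsilon(\mathbf{f}, \bx, \bA, \bb, C) = \sup_{\bx' \in \F} \sum_{i=1}^n (x_i - x'_i) f_i(\bx)$ and bound $|x_i - x'_i| \le 2R$ using \assref{ass:boundedFeas}. Then, in the parametric case, each $f_i(\bx) = f_i(\bx; \btheta)$ ranges over a set bounded in norm by $\overline B/2$ (by the definition \eqref{eq:ParametricB}); in the nonparametric case, $f_i(\bx) = \langle f_i, k_{\bx}\rangle_{\H}$ with $\|f_i\|_\H \le \kappa_i$ and $k(\bx,\bx) \le \overline K^2$, so by Cauchy–Schwarz $|f_i(\bx)| \le \kappa_i \overline K$ (this is exactly \eqref{eq:boundonf}). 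In either case the vector $\mathbf{f}(\bx)$ lies in a bounded set, and the previous lemma already gives $\epsilon(\mathbf{f},\cdot) \le \overline B$ a.s.

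Next I would estimate $\mathcal{R}_N(F) = \E\left[\sup_{\mathbf{f}} \frac{2}{N}\left|\sum_{j=1}^N \sigma_j \epsilon(\mathbf{f}, \bx_j, \bA_j, \bb_j, C_j)\right| \,\middle|\, (\bx_j, \bA_j, \bb_j, C_j)_{j=1}^N\right]$. Since each $\epsilon(\mathbf{f}, \bx_j, \bA_j, \bb_j, C_j)$ is a $1$-Lipschitz convex function (indeed a pointwise sup of affine functions, hence a sublinear functional up to the fixed offset) of the evaluation vector $\mathbf{f}(\bx_j)$, and each coordinate $f_i(\bx_j)$ is a bounded linear functional of $\mathbf{f}$, one can peel off the nonlinearity via a contraction (Talagrand/Ledoux–Talagrand) argument and reduce to the Rademacher complexity of the linear class $\{\mathbf{f} \mapsto f_i(\bx_j)\}$. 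For the parametric case this class has Rademacher complexity $O(\overline B / \sqrt N)$ by a direct computation using $\E\|\sum_j \sigma_j \mathbf{f}(\bx_j)\|_2 \le \sqrt{\sum_j \|\mathbf{f}(\bx_j)\|_2^2}$ (Khintchine / Jensen). For the nonparametric case, the standard kernel bound $\mathcal{R}_N(\{f : \|f\|_\H \le \kappa\}) \le \frac{2\kappa}{N}\sqrt{\sum_j k(\bx_j,\bx_j)} \le \frac{2\kappa \overline K}{\sqrt N}$ applies coordinatewise, and summing over $i=1,\dots,n$ together with Cauchy–Schwarz in $i$ reproduces the factor $\sqrt{\sum_i \kappa_i^2}$ hidden in the definition of $\overline B$. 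Collecting constants gives $\mathcal{R}_N(F) \le 2\overline B/\sqrt N$.

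The main obstacle is the contraction step: one must be careful that $\epsilon(\mathbf{f}, \bx_j, \bA_j, \bb_j, C_j)$ depends on $\mathbf{f}$ only through the \emph{finite-dimensional} vector $\mathbf{f}(\bx_j)$ and is Lipschitz in that vector with a constant controlled by $R$, so that the vector-valued contraction principle for Rademacher averages applies. Once the reduction to the linear class is justified, the remaining estimates are routine: bounding the Rademacher complexity of a norm-bounded linear class, combining coordinates, and tracking constants so that the final bound matches the stated $2\overline B/\sqrt N$.
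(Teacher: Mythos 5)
Your route is genuinely different from the paper's. The paper's argument is a three-line computation: it invokes only the pointwise bound $\epsilon(\mathbf{f},\bx,\bA,\bb,C)\le\overline{B}$ from the preceding lemma (Eq.~\eqref{eq:UpperBoundEpsilon}), applies Cauchy--Schwarz to $\sum_j\sigma_j\epsilon_j$ against the Rademacher vector, and then uses Jensen to extract the $\sqrt{N}$; no contraction principle and no linear or kernel structure of the class is used. You instead write $\epsilon$ via the strong-duality identity of Theorem~\ref{thm:Aghassi} as a $2R$-Lipschitz function of the evaluation vector $\mathbf{f}(\bx_j)$ and peel off the nonlinearity by vector-valued contraction, reducing to the complexity of the evaluation class itself. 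In the nonparametric case this is the standard and robust way to prove such bounds for RKHS balls, and your argument essentially goes through --- but it will not reproduce the stated constant: Maurer's vector-contraction inequality costs at least a factor $\sqrt{2}$, the coordinatewise kernel bounds sum to $\sum_i\kappa_i$ whereas $\overline{B}$ is built from $\sqrt{\sum_i\kappa_i^2}$, and the absolute value inside $\mathcal{R}_N(F)$ requires extra care (contraction applies cleanly to the one-sided version; the class $F$ need not contain the zero function). So you obtain a bound of the form $C\,R\overline{K}\sum_i\kappa_i/\sqrt{N}$ rather than exactly $2\overline{B}/\sqrt{N}$.

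The genuine gap is in your parametric case. After contraction you must control $\E_\sigma\sup_{\btheta\in\Theta}\sum_{j,i}\sigma_{ji}f_i(\bx_j;\btheta)$, and the class $\{\btheta\mapsto f_i(\bx_j;\btheta)\}$ is \emph{not} linear in $\btheta$: the paper assumes only that $\Theta$ is compact and $\mathbf{f}(\bx;\btheta)$ is continuous in $\btheta$. The inequality $\E\bigl\|\sum_j\sigma_j\mathbf{f}(\bx_j)\bigr\|_2\le\sqrt{\sum_j\|\mathbf{f}(\bx_j)\|_2^2}$ that you cite is valid for a \emph{fixed} $\mathbf{f}$, but it does not survive placing the supremum over $\btheta$ inside the expectation. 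A class that is merely uniformly bounded by $\overline{B}/2$ at each data point can have Rademacher complexity of order $\overline{B}$ with no $1/\sqrt{N}$ decay at all (take a family rich enough to match every sign pattern $\sigma$). To close this you would need either a structural assumption on the dependence on $\btheta$ (e.g., the linear parameterization $\mathbf{f}(\bx;\btheta)=\sum_i\theta_i\bphi_i(\bx)$ discussed after Theorem~\ref{thm:ParametricFormulation}, for which your Khintchine/Jensen step does apply to the finite-dimensional parameter) or a covering-number argument over $\Theta$; the uniform bound \eqref{eq:ParametricB} alone is not enough.
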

\begin{proof}
We prove the lemma for Problem~\eqref{eq:ParametricInvVI}.  The proof in the other case is identical.  Let 
$
\mathcal{S} =  \{ \mathbf{f}( \cdot, \btheta) : \btheta \in \Theta \}$.  
Then, 
\begin{align*}
\mathcal{R}_N(F) &= \frac{2}{N} \E\left[ \sup_{f \in \mathcal{S}} 
\left.\left| \sum_{j=1}^N \sigma_j \epsilon(\bx_j, \bA_j, \bb_j, C_j) \right| \right| (\bx_j, \bA_j, \bb_j, C_j)_{j=1}^N \right]
\\
&\leq \frac{2 \overline{B}}{N} \E[ (\sum_{j=1}^N \sigma_j^2 )^{\frac{1}{2}}]  &&(\text{using \eqref{eq:UpperBoundEpsilon}})
\\
& \leq \frac{2 \overline{B}}{N} \sqrt{ \E[  \sum_{j=1}^N \sigma_j^2 ] }  &&(\text{Jensen's inequality})
\\
&=\frac{2 \overline{B}}{\sqrt{ N }} && ( \text{$\sigma_j^2 = 1$ a.s.}). 
\end{align*}
\end{proof}

We are now in a position to prove the theorem.  
\begin{proof}[Theorem~\ref{thm:Rademacher} ]
Observe that $z_N  = \frac{1}{N} \sum_{j=1}^N (\epsilon( \mathbf{f}_N, \bx_j, \bA_j, \bb_j, C_j))^p$. 
Next, the function $\phi(z) = z^p$ satisfies $\phi(0) = 0$ and is Lipschitz with constant $L_\phi = p \overline{B}^{p-1}$ on the interval $[0, \overline{B} ]$.  Consequently, 
from Theorem 12 part 4 of \cite{bartlett2003rademacher}, 
\begin{align*}
\mathcal{R}_N( \phi \circ F ) &\leq2 L_\phi  \mathcal{R}_N(F)
\\
&\leq 2p \overline{B}^{p-1} \frac{2\overline{B}}{\sqrt{N}}
\\
&= \frac{4p \overline{B}^p }{\sqrt{N}}.
\end{align*}
Now applying Lemma~\ref{lemma:fundamental} with $\zeta \rightarrow (\tilde{\bx}, \tilde{\bA}, \tilde{\bb}, \tilde{C})$, $f(\cdot) \rightarrow \epsilon( \cdot )^p$, and $M = \overline{B}^p$ yields the first part of the theorem.

For the second part of the theorem, observe that, conditional on the sample, the event $\tilde{\bx}$ is not a $z_N + \alpha$-approximate equilibrium is equivalent to the event that $\epsilon_N > z_N + \alpha$.  Now use Markov's inequality and apply the first part of the theorem.  
\end{proof}

\subsection{Proof of Theorem~\ref{thm:Predictive}}
\begin{proof}
Consider the first part of the theorem.  

By construction, $\hat{\bx}$ solves $\VI(\mathbf{f}(\cdot, \theta_N), \bA_{N+1}, \bb_{N+1}, C_{N+1})$.  The theorem, then, claims that $\bx_{N+1}$ is $\delta^\prime \equiv \sqrt{\frac{z_N}{\gamma}}$ near a solution to this VI.  From Theorem~\ref{thm:PangApprox}, if $\bx_{N+1}$ were not $\delta^\prime$ near a solution, then it must be the case that $\epsilon( \mathbf{f}(\cdot, \theta_N), \bx_{N+1}, \bA_{N+1}, \bb_{N+1}, C_{N+1}) > z_N$.  By Theorem~\ref{thm:Campi}, this happens only with probability $\beta(\alpha)$.  

The second part is similar to the first with Theorem~\ref{thm:Campi} replaced by Theorem~\ref{thm:Rademacher}.  

\end{proof}

{ \blockblue
\section{Casting Structural Estimation as an Inverse Variational Inequality}
\label{sec:Casting}
In the spirit of structural estimation, assume there exists a \emph{true} $\btheta^* \in \Theta$ that generates solutions $\bx_j^*$ to $\VI(\mathbf{f}(\cdot, \theta^*), \bA^*_j, \bb^*_j, C^*_j)$.  We observe $(\bx_j, \bA_j, \bb_j, C_j)$ which are noisy versions of these true parameters.  We additionally are given a precise mechanism for the noise, e.g., that
\begin{align*}
\bx_j = \bx^*_j + \Delta \bx_j, \quad \bA_j = \bA^*_j + \Delta \bA_j, \quad \bb_j = \bb^*_j + \Delta \bb_j, 
\quad C_j = C_j^*,
\end{align*}
where $(\Delta\bx_j, \Delta \bA_j, \Delta \bb_j)$ are i.i.d. realizations of a random vector $(\tilde{\Delta\bx}, \tilde{\Delta \bA}, \tilde{\Delta \bb})$ and $\tilde{\Delta\bx}, \tilde{\Delta \bA}, \tilde{\Delta \bb}$ are mutually uncorrelated.

We use Theorem~\ref{thm:Aghassi} to estimate $\btheta$ under these assumptions by solving
\begin{align} \nonumber
\min_{\by \geq \bzero, \btheta \in \Theta, \Delta \bx, \Delta \bA, \Delta \bb} \quad & \left\| \begin{pmatrix} \tilde{\Delta\bx_j} \\ \tilde{\Delta \bA_k} \\ \tilde{\Delta \bb_j} \end{pmatrix}_{j=1, \ldots, N} \right\|
\\ \nonumber
\text{s.t.} \quad & 
(\bA_j - \Delta \bA_j)^T \by_j \leq_{C_j} \mathbf{f}(\bx_j -  \Delta \bx_j, \btheta), \ j=1, \ldots, N, 
\\ \label{eq:structest}
&
(\bx_j - \Delta \bx_j)^T \mathbf{f}(\bx_j -  \Delta \bx_j, \btheta) = \bb_j^T \by_j, j =1, \ldots, N,
\end{align} 
where $\| \cdot \|$ refers to some norm.  Notice this formulation also supports the case where potentially some of the components of $\bx$ are unobserved; simply replace them as optimization variables in the above.  
In words, this formulation assumes that the ``de-noised" data constitute a perfect equilibrium with respect to the fitted $\btheta$.

We next claim that if we assume all equilibria occur on the strict interior of the feasible region, Problem~\eqref{eq:structest} is equivalent to a least-squares approximate solution to the equations 
$\mathbf{f}(\bx^*) = \bzero$.  Specifically, when $\bx^*$ occurs on the interior of $\F$, the VI condition Eq.~\eqref{eq:DefVI}  is equivalent to the equations
$
\mathbf{f}(\bx^*) = \bzero.
$
At the same time, by Theorem~\ref{thm:Aghassi}, Eq.~\eqref{eq:DefVI} is equivalent to the system \eqref{eq:DualFeas}, \eqref{eq:StrongDuality} with $\epsilon = 0$ which motivated the constraints in Problem~\eqref{eq:structest}.  Thus, Problem~\eqref{eq:structest} is equivalent to finding a minimal (with respect to the given norm) perturbation which satisfies the structural equations.

We can relate this weighted least-squares problem to some structural estimation techniques.  Indeed, \cite{dube2012improving} and \cite{su2012constrained} observed that many structural estimation techniques can be reinterpreted as a constrained optimization problem which minimizes the size of the perturbation necessary to make the observed data satisfy the structural equations, and, additionally, satisfy constraints motivated by orthogonality conditions and the generalized method of moments (GMM).  In light of our previous comments, if we augment Problem~\eqref{eq:structest} with the same orthogonality constraints, and all equilibria occur on the strict interior of the feasible region, the solutions to this problem will coincide traditional estimators.  

Of course, some structural estimation techniques incorporate even more sophisticated adaptations.  They may also pre-process the data (e.g., 2 stage least squares technique in econometrics) incorporate additional constraints (e.g. orthogonality of instruments approach), or tune the choice of norm in the least-squares computation (two-stage GMM estimation).  These application-specific adaptations improve the statistical properties of the estimator given certain assumptions about the data generating process.  What we would like to stress is that, provided we make the same adaptations to Problem~\eqref{eq:structest} -- i.e., preprocess the data, incorporate orthogonality of instruments, and tune the choice of norm -- and provided that all equilibria occur on the interior, the solution to Problem~\eqref{eq:structest} must coincide exactly with these techniques.  Thus, they necessarily inherit all of the same statistical properties.  

Recasting (at least some) structural estimation techniques in our framework facilitates a number of comparisons to our proposed approach based on Problem~\eqref{eq:ParametricInvVI}.  First, it is clear how our perspective on data alters the formulation.  Problem~\eqref{eq:structest} seeks minimal perturbations so that the observed data are exact equilibria with respect to $\btheta$, while Problem~\eqref{eq:ParametricInvVI} seeks a $\btheta$ that makes the observed data approximate equilibria and minimizes the size of the approximation.  Secondly, the complexity of the proposed optimization problems differs greatly.  The complexity of Problem~\eqref{eq:structest} depends on the dependence of $\mathbf{f}$ on $\bx$ and $\btheta$  (as opposed to just $\btheta$ for \eqref{eq:ParametricInvVI}), and there are unavoidable non-convex, bilinear terms like $\Delta\bA_j^T \by_j$.  These terms are well-known to cause difficulties for numerical solvers.  Thus, we expect that solving this optimization to be significantly more difficult than solving Problem~\eqref{eq:ParametricInvVI}.  Finally, as we will see in the next section, Problem~\eqref{eq:ParametricInvVI} generalizes naturally to a nonparametric setting.  
  }

{\blockblue
\section{Omitted Formulations}
\subsection{Formulation from Section~\ref{sec:DemandEstimation}}
\label{app:DemandForm}

Let $\xi^{med}$ be the median value of $\xi$ over the dataset.  Breaking ties arbitrarily, $\xi^{med}$ occurs for some observation $j = j^{med}$.  Let $p_1^{med}, p_2^{med}, \xi^{med}_1, \xi^{med}_2$ be the corresponding prices and demand shocks at time $j^{med}$.  (Recall that in this section $\xi = \xi_1 = \xi_2$.)  These definitiosn make precise what we mean in the main text by ``fixing other variables to the median observation.}
Denote by $\underline{p}_1, \underline{p}_2$ the minimum prices observed over the data set.  

Our parametric formation in Sec.~\ref{sec:DemandEstimation} is
\begin{subequations} \label{eq:DemandForm} 
\begin{align}  
\min_{\by, \bepsilon, \btheta_1, \btheta_2 } \quad & \| \bepsilon \|_\infty 
\label{eq:2NormObj} 
\\ \notag
\text{s.t.} \quad &  \by^j \geq \bzero, \quad j = 1, \ldots, N,
\\ \nonumber
& y_i^j \geq M_i(p_1^j, p_2^j, \xi^j; \btheta_i), \quad i = 1, 2, \ j = 1, \ldots, N, 
\\ \nonumber
& \sum_{i=1}^2 
\overline{p}^j y_i^j -
(p_i^j) M_i(p_1^j, p_2^j, \xi^j; \btheta_i)  \leq \epsilon_j, \quad j = 1, \ldots, N, 
\\ \label{eq:Decrease1}
&M_1(p_1^j, p_2^{med}, \xi^{med}; \btheta_1) \geq  M_1(p_1^k, p_2^{med}, \xi^{med}; \btheta_1), \quad, \forall 1 \leq j,k \leq N \text{ s.t. } p_1^j \leq p_1^k,
\\ \label{eq:Decrease2}
&M_2(p_1^{med}, p_2^j, \xi^{med}; \btheta_2) \geq  M_2(p_1^{med}, p_2^k, \xi^{med}; \btheta_2), \quad, \forall 1 \leq j,k \leq N \text{ s.t. } p_2^j \leq p_2^k,
\\ \label{eq:Scaling1}
&M_1(\underline{p}_1, p_2^{med}, \xi^{med}; \btheta_1) = M^*_1(\underline{p}_1, p_2^{med}, \xi^{med}_1; \btheta^*_1)
\\ \label{eq:Scaling2}
&M_2(p_1^{med}, \underline{p}_2, \xi^{med}; \btheta_2) = M^*_2(p_1^{med}, \underline{p}_2, \xi^{med}_2 ; \btheta^*_2)
\end{align}
\end{subequations}
Here $M_1$ and $M_2$ are given by Eq.~\eqref{eq:MargRevTrue}.  Notice, for this choice, the optimization is a linear optimization problem.

Eqs.~\eqref{eq:Decrease1} and \eqref{eq:Decrease2} constrain the fitted function to be non-decreasing in the firm's own price.  Eqs.~\eqref{eq:Scaling1} and \eqref{eq:Scaling2} are normalization conditions.  We have chosen to normalize the functions to be equal to the true functions at this one point to make the visual comparisons easier.  In principle, any suitable normalization can be used.  

Our nonparametric formulation is similar to the above, but we replace 
\begin{itemize}
\item The parametric $M_1(\cdot, \btheta_1), M_2(\cdot, \btheta_2)$ with nonparametric $M_1(\cdot), M_2(\cdot) \in \mathcal{H}$
\item The  objective by $\| \bepsilon \|_1 + \lambda ( \| M_1 \|_\H + \| M_2 \|_\H )$.
\end{itemize}
By Theorem~\ref{Prop:Finite} and the discussion in Section~\ref{sec:extensions}, we can rewrite this optimization as a convex quadratic program.  

\subsection{Formulation from Section~\ref{sec:DemandEstimation2}}
\label{app:DemandForm2}

Our parametric formulation is nearly identical to the parametric formulation in Appendix~\ref{app:DemandForm}, with the following changes:
\begin{itemize}
	\item Replace Eq.~\eqref{eq:2NormObj} by $\| \bepsilon \|_\infty + \lambda ( \| \btheta_1 \|_1 + \|\btheta_2\|_1)$
	\item Replace the definition of $M_1, M_2$ by Eq.~\eqref{eq:ParametricGuess}.  
\end{itemize}

Our nonparametric formulation is identical to the nonparametric formulation of the previous section.  
}

\end{document}